\theoremstyle{plain}
\numberwithin{equation}{section}
\newtheorem{thm}{Theorem}[section]
\newtheorem{cor}[thm]{Corollary}
\newtheorem{dfn}[thm]{Definition}
\newtheorem{lemma}[thm]{Lemma}
\newtheorem{prop}[thm]{Proposition}
\newtheorem{rmq}[thm]{Remark}
\newcommand{\R}{\mathbb{R}}
\newcommand{\N}{\mathbb{N}}
\newcommand{\C}{\mathbb{C}}
\newcommand{\ZZZ}{\mathds{Z}}
\newcommand{\del}{\partial}
\newcommand{\ii}{{\rm i}}
\def\hat{\widehat}
\def\bar{\overline}
\begin{document}

\title[Dispersion estimates for the wave equation outside of a cylinder]{Dispersive estimates for the wave equation outside a cylinder in $\mathbb{R}^3$}

\date{}

\author{Felice Iandoli, Oana Ivanovici}
 \address{${}^{*}$Sorbonne Universit\'e, CNRS, LJLL, F-75005 Paris, France}
\email{felice.iandoli@sorbonne-universite.fr, oana.ivanovici@sorbonne-universite.fr}

    \thanks{ The authors were supported by ERC grant ANADEL 757 996. \\
      {\it Key words :}  Dispersive estimates, wave equation, Dirichlet boundary condition. 
     {\it  AMS subject classification : 35Rxx, 58Jxx.} 
    }

\begin{abstract}
We consider the wave equation with Dirichlet boundary conditions in the exterior of a cylinder in $\mathbb{R}^3$ and we construct a global in time parametrix to derive sharp dispersion estimates for all frequencies (low and high) and, as a corollary, Strichartz estimates, all matching the $\mathbb{R}^3$ case. 
\end{abstract}

\maketitle
%

\section{General setting}
We consider the linear wave equation on an exterior domain $\Omega\subset \mathbb{R}^{3} $ with smooth boundary; let $\Delta_D$ be the Laplacian with constant coefficients and Dirichlet boundary conditions,
\begin{equation} \label{WE} 
\left\{ \begin{array}{l}
   (\partial^2_t-
 \Delta_D) u=0,  \;\; \text{ in } \Omega, \\ 
 u|_{t=0} = u_0, \; \partial_t u|_{t=0}=u_1,\quad
 u|_{x=0}=0.
 \end{array} \right.
 \end{equation}
A basic homogeneous (local) estimate says that 
on any smooth Riemannian manifold $(\Omega,g)$ {\it without} boundary, a solution $u$ to the wave equation satisfies (for $T<\infty$)
\begin{equation}\label{SE}
\|u\|_{L^q(0,T) L^r(\Omega)}\leq
C_T \bigl(\,||u_0||_{\dot{H}^{\beta}(\Omega)} +
||u_1||_{\dot{H}^{\beta-1}(\Omega)} \bigr)\,,
\end{equation}
where $\beta=d(\frac 12 -\frac 1r)-\frac 1q$ is dictated by scaling and the pair $(q,r)$ is wave-admissible, i.e such that $\frac{2}{q}+\frac{d-1}{r}\leq\frac{d-1}{2}$ and $(q,r,d)\neq(2,\infty,3)$. 
Here $\dot H^{\beta}(\Omega)$ denotes the homogeneous $L^2$ Sobolev space over $\Omega$. If \eqref{SE} holds for $T = \infty$, Strichartz estimates are said to be global. Such inequalities were established long ago for
Minkowski space (flat metrics) and can be generalized to any smooth Riemannian manifold $(\Omega,g)$  because of their local character (finite propagation speed). They are sharp on every Riemannian manifold $(\Omega,g)$ with $\partial\Omega=\emptyset$.
\vskip1mm

The aforementioned results for ${\mathbb R}^d$ and manifolds without
boundary are now well understood. Euclidean results go back
to R.Strichartz's pioneering work \cite{stri77}, where he proved the particular case $q=r$ for the wave and Schr\"{o}dinger equations. This was later generalized to mixed $L^{q}_{t}L^{r}_{x}$ norms by J.Ginibre and G.Velo \cite{give85} for Schr\"{o}dinger equations, where $(q,r)$ is sharp admissible and $q>2$; wave estimates were obtained by J.Ginibre and G.Velo \cite{GV85,give95}, H.Lindblad and C.Sogge \cite{ls95}, as well as L.Kapitanski for a smooth variable coefficients metric,\cite{lev90}. Endpoint cases for both equations  were finally settled by M.Keel and T.Tao \cite{keta98}. On manifolds without boundary, by finite speed of propagation it suffices to work in coordinate charts and to establish estimates for variable coefficients operators in $\mathbb{R}^d$. For operators with $C^{1,1}$ coefficients, Strichartz estimates were shown by H.Smith \cite{sm98} (see also D.Tataru \cite{tat02} for metrics with $C^{\alpha}$ coefficients).
\vskip1mm

The canonical path leading to such Strichartz estimates is to obtain a
stronger, fixed time, dispersion estimate, which is then combined with
energy conservation, interpolation and a duality argument to obtain
\eqref{SE}. 
If $e^{\pm it\sqrt{-\Delta_{\mathbb{R}^d}}}$ are the
half-wave propagators in $(\mathbb{R}^d,(\delta_{i,j}))$, $\chi\in C_{0}^\infty
(]0,\infty[)$ then the following holds:
\begin{equation}\label{disprdWE}
\|\chi(hD_t)e^{\pm it\sqrt{-\Delta_{\mathbb{R}^d}}}\|_{L^1(\mathbb{R}^d)\rightarrow L^{\infty}(\mathbb{R}^d)}\leq C(d)h^{-d}\min\{1,(h/|t|)^{\frac{d-1}{2}}\}.
\end{equation}
Our aim in the present paper is to prove dispersion for \eqref{WE} when $\partial\Omega$ is a cylinder in $\mathbb{R}^3$ :  a parametrix near diffractive points may be explicitly obtained in a similar way as in \cite{IL} (where the case of the wave and Schr\"odinger equations outside a ball of $\mathbb{R}^3$ was dealt with by the second author and G.Lebeau)  and the diffractive effects in the shadow region are much weaker; however, dealing with the case when both the source and the observation points are located very close to the boundary at a long distance is a real hurdle. In fact, this situation corresponds to rays that remain close to the boundary for a large time interval and propagate near points where the curvature vanishes : to our knowledge, a parametrix near such points, allowing for sharp amplitude estimates, was only constructed in \cite{Meas} inside a cylindrical domain of $\mathbb{R}^3$. However, while in \cite{Meas} the time is bounded (as at the time we did not know to handle the reflections in very large time in the interior case), the parametrix we construct here is global in time, depending on the angle of the initial directions of propagation and on the initial distance of the data to the boundary: different values of these parameters completely modify its construction; dealing with points where the curvature vanishes requires handling separately different situations (involving Hankel and Bessel functions). We expect that in order to deal with general boundaries with no convexity or concavity assumption, and allowing for possibly vanishing curvatures along lower dimensional submanifolds, we need to understand a variety of simple models and the exterior of the cylinder is the first of them after the exterior of a sphere.

Let us provide some details : introducing cylindrical coordinates in $\mathbb{R}^3$, our domain becomes $\Omega=\{(r,\theta,z), r\geq 1, \theta\in [0,2\pi), z\in \mathbb{R}\}$ and $\Delta_D=\frac{\partial^2}{\partial r^2}+\frac{1}{r}\frac{\partial}{\partial_r}+\frac{1}{r^2}\frac{\partial^2}{\partial \theta^2}+\frac{\partial^2}{\partial z^2}$. With $h$ a small parameter and $\tau=h\partial_t/i$, $\eta=h\partial_y/i$, $\xi=h\partial_x/i$, $\vartheta=h\partial_z/i$, the characteristic set of $\partial^2_t-\Delta_D$ is $\tau^2=\xi^2+\frac{1}{r^2}\eta^2+\vartheta^2$ and the boundary is $\{r=1\}$. In \cite{IL}, G.Lebeau and the second author constructed a global in time parametrix for the wave equation outside a ball in $\mathbb{R}^3$, which allowed them to obtain sharp dispersion bounds. In the particular case of \cite{IL} the model domain was $\{(r,\theta,\omega), r\geq 1, \theta\in [0,\pi), \omega \in[0,2\pi)\}$ and the Laplace operator was given by $\Delta_F:
 =\frac{\partial^2}{\partial r^2}+\frac{d-1}{r}\frac{\partial}{\partial_r}+\frac{1}{r^2}(\frac{\partial^2}{\partial \theta^2}+\frac{1}{\sin ^2\theta}\frac{\partial^2}{\partial\omega^2})$. The main difficulty came from rays that hit the boundary without being deviated (corresponding to $\xi=0$, $\eta=1$ and $r$ near $1$; in fact, due to the rotational symmetry, in the exterior of the ball the characteristic equation is $\xi^2+\frac{1}{r^2}\eta^2=\tau^2$) : for this regime, the most efficient tool is  the Melrose-Taylor parametrix (see \cite{zw90}), as it provides us with the form of the solution to \eqref{WE} near diffractive points $\xi=0$, $r=1$ (recall that this parametrix was first used by H.Smith and Ch.Sogge in \cite{smso95} to obtain, in a direct way, local in time sharp Strichartz bounds for waves). In the case of the exterior of a cylinder, the ``diffractive regime" would correspond to $(\eta/\tau)^2+(\vartheta/\tau)^2=1$, $\xi=0$, $r=1$ (instead of $(\eta/\tau)^2=1$, $\xi=0$, $r=1$ of \cite{IL}) : it turns out that when $\vartheta/\tau$ is very close to $1$ the Melrose-Taylor parametrix fails to apply (essentially because one cannot perform any kind of stationary phase arguments anymore in the oscillatory integrals that allow to obtain the form of the solution near the boundary in terms of Airy functions). In particular, the situation $\vartheta/\tau=1$ correspond to rays that (start and) remain close to the boundary for all time and at our knowledge has been encountered only in \cite{Meas} where the author studied dispersive bounds for \eqref{WE} in the interior of a cylindrical domain $\{ (r,\theta,z),r\leq 1, \theta\in [0,2\pi), z\in \mathbb{R}\}\subset \mathbb{R}^3$ with Dirichlet Laplacian $\Delta_D=\partial^2_r+(2-r)\partial^2_{\theta}+\partial^2_z$ (and obtained a ``sharp loss" of $1/4$ due to swallowtail type singularities in the wave front set) ; notice however that in \cite{Meas} the time is bounded 
 so when $\vartheta/\tau$ is close to $1$ the estimates follow easy by Sobolev embedding (and a parametrix is naturally obtained in terms of a spectral sum). In the exterior of a cylinder, our aim is to construct the parametrix globally in time, which makes this situation more difficult (and the case $1-\vartheta/\tau\sim 2^{-j}$ already very delicate when compared to the exterior of a ball). 

 Throughout the rest of the paper $A\lesssim B$ means that there exists a constant $C$ such that $A\leq CB$, such a constant may change from line to line and it is independent of all parameters, and $A\sim B$ means that $B\lesssim A\lesssim B$. We may now state our main results.

\begin{thm}\label{thmdisp3D}
Let $\Theta\subset\mathbb{R}^3$ be the cylinder in $\mathbb{R}^3$ and set $\Omega=\mathbb{R}^3\setminus \Theta$. Let $\Delta_D$ denote the Laplace operator in $\Omega$ with Dirichlet boundary condition and let $\chi\in C^{\infty}_0(0,\infty)$.  The following estimate holds for all $t>0$
 \begin{equation}\label{dispomega3}
\|\chi(hD_t)e^{\pm it\sqrt{-\Delta_D}}\|_{L^1(\Omega)\rightarrow L^{\infty}(\Omega)}\lesssim h^{-3}\min\{1,\frac{h}{t}\}.
\end{equation}
Moreover, let $\chi_0\in C^{\infty}_0(-2,2)$, equal to $1$ on $[0,3/2]$. Then
 $\|\chi_0(D_t)e^{\pm it\sqrt{-\Delta_D}}\|_{L^1(\Omega)\rightarrow L^{\infty}(\Omega)}\lesssim 1/(1+t)$.
 \end{thm}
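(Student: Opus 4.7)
The strategy is to construct an explicit global-in-time parametrix for $\chi(hD_t)e^{\pm it\sqrt{-\Delta_D}}$ and to estimate its Schwartz kernel pointwise, regime by regime. The cylinder $\{r=1\}$ is invariant under translation in $z$ and rotation in $\theta$, so I take a Fourier series in $\theta\in[0,2\pi)$ and a Fourier transform in $z\in\mathbb{R}$; for each $k\in\mathbb{Z}$ and each dual frequency $\zeta\in\mathbb{R}$ the problem reduces to a radial equation on $r\geq 1$ with Dirichlet data at $r=1$, associated to $-\partial_r^2-r^{-1}\partial_r+k^2/r^2+\zeta^2$. The Dirichlet resolvent is an explicit combination of Hankel functions $H^{(1)}_k,H^{(2)}_k$ vanishing at $r=1$, and the kernel of $\chi(hD_t)e^{\pm it\sqrt{-\Delta_D}}$ becomes a triple oscillatory integral in $(\tau,k,\zeta)$.

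\emph{Phase-space decomposition.} With $\eta=hk$ and $\vartheta=h\zeta$, the relevant region is $(\eta/\tau)^2+(\vartheta/\tau)^2\leq 1$. I split it into three zones: (i) the \emph{transverse} zone, where $(\eta/\tau)^2+(\vartheta/\tau)^2$ is bounded away from $1$; (ii) the \emph{diffractive} zone, where this sum is close to $1$ but $|\vartheta|/\tau$ stays bounded away from $1$; (iii) the \emph{tangential-to-axis} zone, where $\vartheta/\tau$ is close to $1$. On (i), direct stationary phase on the reflected Hankel expansion yields the bound $h^{-3}\min\{1,h/|t|\}$, matching $\mathbb{R}^3$. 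On (ii), $\sqrt{1-(\vartheta/\tau)^2}$ stays bounded below, so the radial problem is non-degenerate; Melrose--Taylor parametrices together with Airy asymptotics for $H^{(1,2)}_k$, in the spirit of \cite{IL}, deliver the same dispersive bound.

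\emph{The delicate zone and low frequencies.} Zone (iii) is the main novelty: I dyadically decompose $1-\vartheta/\tau\sim 2^{-j}$ for $j\geq 0$. On each slice I use Debye or Langer asymptotics of $H^{(1,2)}_k$ adapted to the relative size of $k$ and the effective radial frequency $\sqrt{\tau^2-\vartheta^2}/h$, and write the kernel as an oscillatory integral with $j$-dependent phase and amplitude. Integrations by parts off the critical set, together with stationary phase near it, yield on each slice a bound $h^{-3}\min\{1,h/|t|\}\cdot 2^{-j\alpha}$ with $\alpha>0$; summing the geometric series in $j$ closes \eqref{dispomega3}. For the low-frequency statement, $\chi_0(D_t)$ fixes $h\sim 1$: since the exterior of the cylinder is non-trapping, the spectral measure extends smoothly down to zero energy with $\mathbb{R}^3$ behaviour, and the same parametrix at $h\sim 1$ produces the decay $1/(1+t)$.

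\emph{Main obstacle.} Unlike the exterior-of-a-ball case of \cite{IL}, where the tangential momentum is discrete and confined to a bounded range, here $\vartheta/\tau$ varies continuously up to $1$, and at $\vartheta/\tau=1$ the geometry degenerates: rays stay arbitrarily close to the boundary for arbitrarily long times, and no Melrose--Taylor calculus applies. Controlling $H^{(1,2)}_k$ uniformly across the transition $2^{-j}\to 0$ and proving a sharp, loss-free bound on each dyadic piece, so that the sum in $j$ converges, is the heart of the proof.
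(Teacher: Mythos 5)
Your overall architecture — cylindrical coordinates, separation into Fourier series in $\theta$ and Fourier transform in $z$, the Dirichlet resolvent built from $H_n^{(1,2)}$, a phase-space split between a Melrose--Taylor regime and a Hankel/Bessel regime, and a dyadic decomposition in $1-\vartheta/\tau$ — matches the paper's plan. But two load-bearing pieces are missing or stated incorrectly.

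First, your zone split is too coarse and in one respect simply wrong. The boundary between where the Melrose--Taylor/Airy parametrix applies and where the explicit Hankel/Bessel representation is needed is not ``$\vartheta/\tau$ close to $1$'' versus ``$\vartheta/\tau$ bounded away from $1$''. The paper introduces, with $s$ the radial coordinate of the source and $1-\gamma^2\sim 2^{-2j}$, the \emph{frequency-and-distance-dependent} threshold $j(s,h):=\sup\{j:\,2^{-3j}s/h\geq 1\}$, i.e.\ $j\lesssim\frac13\log_2(s/h)$. For $j\leq j(s,h)$ the phase of $w_{j}$ still has a large parameter $2^{-j}s/h\gtrsim(s/h)^{2/3}$ that makes stationary phase legitimate, so the Melrose--Taylor/Airy machinery works even deep into the tangential-to-axis region; conversely, when the source is close to the boundary ($s\leq\sqrt{2}$) the Hankel/Bessel representation is needed already at $j=0$, i.e.\ even in your ``diffractive zone (ii)''. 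The trichotomy that actually organizes the proof is: (a) $s\geq\sqrt{2}$ and $j\leq j(s,h)$ — Melrose--Taylor; (b) $s\geq\sqrt{2}$ and $j>j(s,h)$ — Hankel/Bessel; (c) $s\leq\sqrt{2}$, all $j$ — Hankel/Bessel. Leaving this out is not a stylistic omission: without it you cannot justify stationary phase in the Airy/CFU reduction (it fails precisely when $2^{-j}s/h$ is no longer large), nor can you explain why the ball case of \cite{IL} (where the tangential momentum is discrete and $\vartheta\equiv 0$) is so much simpler.

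Second, the claimed per-slice bound $h^{-3}\min\{1,h/|t|\}\cdot 2^{-j\alpha}$ with a fixed $\alpha>0$ is not what one obtains, and a naive geometric summation would not close the argument. In several Hankel/Bessel sub-regimes the per-slice contribution, after summing over $n$ with the constraint $n\lesssim 2^{-j}/h$ coming from the support of $\chi(h\tau)\psi_j(1-\gamma^2)$, is of the form $h^{-2}t^{-1}\cdot h\,2^{j}$ (see the estimate culminating in \eqref{I-chi+Msm}), which \emph{grows} geometrically in $j$; the sum is controlled not by a fixed decay rate but by the cutoff $2^{j}\lesssim 1/h$. In other sub-regimes the sum over $j$ is governed by a $t$-dependent cutoff such as $2^{-2j}\leq 1/|z|$ (as in \eqref{Ichi+00nnst}), so the decay is centered at a $t$-dependent critical $j_0$, not at $j=0$. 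The actual mechanism that makes the $j$-sum work is the interplay between (i) the support constraint $n\sim 2^{-j}/h$, (ii) the stationarity of the time phase only when $t\sim|z|\sim 2^{j}r$, which singles out finitely many $j$, and (iii) the factor $2^{-3j}$ that is available precisely because one is in the regime $j\geq j(r,h)$, i.e.\ $\tau 2^{-3j}r\lesssim 1$. Your sketch asserts a uniform geometric gain that the estimates do not actually deliver; as written, the summation step is a gap.
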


\begin{thm}\label{thmStri3D}
Under the assumptions of Theorem \ref{thmdisp3D}, Strichartz estimates for the wave flow outside a cylinder in $\mathbb{R}^3$ hold as in the flat case, globally in time.
\end{thm}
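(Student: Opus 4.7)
The plan is to bootstrap the fixed-time bounds from Theorem \ref{thmdisp3D} to global-in-time Strichartz estimates by the now-standard machinery combining energy conservation, complex interpolation, a $TT^{*}$/Keel--Tao duality argument, and Littlewood--Paley decomposition. First, I would perform a Littlewood--Paley decomposition adapted to the Dirichlet Laplacian: choose a dyadic partition of unity $1 = \sum_{j\in\mathbb{Z}} \chi_j$ with $\chi_j(\tau) = \chi(2^{-j}\tau)$ for a bump $\chi$ supported in $(\frac12,2)$, and split the solution into $u = \sum_j u_j$ with $u_j := \chi_j(\sqrt{-\Delta_D})u$. The functional calculus required to define these projections, as well as the associated square-function characterization of $L^r(\Omega)$ for $1<r<\infty$ and of the spectrally-defined $\dot H^\beta(\Omega)$, is available on the exterior of a smooth convex obstacle.

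Next, for each high-frequency piece ($j\geq 0$), I combine the energy identity $\|e^{\pm it\sqrt{-\Delta_D}}\|_{L^{2}\to L^{2}}=1$ with the dispersion bound \eqref{dispomega3} at scale $h=2^{-j}$. Complex interpolation between $L^{1}\to L^{\infty}$ and $L^{2}\to L^{2}$ yields
\[
\bigl\|\chi(hD_t)e^{\pm it\sqrt{-\Delta_D}}f\bigr\|_{L^{r}(\Omega)}
\lesssim h^{-3(\tfrac12-\tfrac1r)}\min\Bigl\{1,\tfrac{h}{t}\Bigr\}^{1-2/r}\|f\|_{L^{r'}(\Omega)},
\]
which is exactly the flat $\mathbb{R}^3$ decay. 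The abstract Keel--Tao theorem then produces, for every wave-admissible pair $(q,r)$ with $\tfrac{2}{q}+\tfrac{2}{r}\leq 1$ and $(q,r)\neq(2,\infty)$, the frequency-localized bound
\[
\bigl\|\chi(hD_t)e^{\pm it\sqrt{-\Delta_D}}u_0\bigr\|_{L^{q}_{t}L^{r}_{x}(\mathbb{R}\times\Omega)}
\lesssim h^{-\beta}\|u_0\|_{L^{2}(\Omega)},\qquad \beta=3\bigl(\tfrac12-\tfrac1r\bigr)-\tfrac{1}{q}.
\]
Squaring, summing over $j\geq 0$ via Littlewood--Paley orthogonality (after inserting Minkowski's inequality when $q,r\geq 2$) and applying the square-function characterization of $\dot H^\beta(\Omega)$, delivers the high-frequency half of \eqref{SE} globally in $t\in\mathbb{R}$.

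For the low-frequency part I would use the second statement of Theorem \ref{thmdisp3D}, namely $\|\chi_0(D_t)e^{\pm it\sqrt{-\Delta_D}}\|_{L^{1}\to L^{\infty}}\lesssim (1+t)^{-1}$. Since the decay rate $(d-1)/2=1$ is integrable away from the origin, interpolating once again with $L^{2}\to L^{2}$ and applying Keel--Tao at fixed frequency $h\sim 1$ produces a global-in-time $L^{q}_{t}L^{r}_{x}$ bound on $\chi_0(D_t)u$, with the remaining dyadic pieces $2^{j}\lesssim 1$ summed as a geometric series thanks to Bernstein's inequality (which on the exterior of a convex obstacle follows from heat kernel bounds for $e^{t\Delta_D}$). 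The main obstacle in this program is the verification of a clean Littlewood--Paley/spectral multiplier theory on $\Omega$ that is simultaneously compatible with the spectrally defined $\dot H^\beta(\Omega)$ appearing in \eqref{SE} and with the usual $L^r(\Omega)$ norms; once this analytic ingredient is in hand, every other step is a direct transcription of the Minkowski argument, so that the range of admissible pairs and the Sobolev exponent exactly match the flat case.
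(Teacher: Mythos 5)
Your proposal is correct and takes essentially the same route as the paper, which gives no details beyond the single remark that Theorem \ref{thmStri3D} follows from \eqref{dispomega3} via the usual $TT^{*}$ argument and conservation of energy; your write-up simply spells out that standard machinery (Littlewood--Paley decomposition adapted to $\Delta_D$, interpolation of the frequency-localized dispersion with the $L^2$ bound, Keel--Tao, and Bernstein for low frequencies). One small slip: in the intermediate interpolated $L^{r'}\to L^{r}$ bound the exponent of $h$ should be $-3(1-2/r)$ rather than $-3\bigl(\tfrac12-\tfrac1r\bigr)$, but since your final frequency-localized Strichartz exponent $\beta = 3\bigl(\tfrac12-\tfrac1r\bigr)-\tfrac1q$ is the correct one, the conclusion is unaffected.
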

Theorem \ref{thmStri3D} follows from \eqref{dispomega3} using the usual $TT^*$ argument and the conservation of energy. In the remaining of this work we focus on the proof of Theorem \ref{thmdisp3D}, first in the high-frequency situation which is by far the most difficult one. The small frequency case will be sketched in the last part.\\

 We recall a classical notion of asymptotic expansion: a function $f(w)$ admits an asymptotic expansion for $w\rightarrow 0$ when there exists a (unique) sequence $(c_{n})_{n}$ such that, for any $n$, $\lim_{w\rightarrow 0} w^{-(n+1)}(f(w)-\sum_{0}^{n} c_{j} w^{j})=c_{n+1}$. We denote $ f(w)\sim_{w} \sum_{n} c_{n} w^{n}$.

\subsubsection{The incoming wave }\label{sectWEOutSC}
Let $\mathbb{D}$ denote the unit disk in $\mathbb{R}^2$ and let $\Theta:=\mathbb{D}\times
\mathbb{R}\subset \mathbb{R}^3$. We set $\Omega:=\mathbb{R}^3\setminus\Theta$, then $\partial\Omega=\mathbb{S}^1\times \mathbb{R}$ is the infinite cylinder. 
We introduce cylindrical coordinates as follows: a point of $Q$ of $\Omega$ with coordinates $(x_1,x_2,x_3)\in \mathbb{R}^3$ is defined by $(r,\theta,z)$ where $r>1$, $\theta\in[0,2\pi)$ and $z\in\mathbb{R}$ and where $x_1=r\cos({\theta}), x_2=r\sin({\theta}), x_3=z$. We also set
$r=1+x$, $x\geq 0$, $y:=\pi/2-\theta$, $\theta\in [0,2\pi)$, $z\in\R$.
In these coordinates, the Laplacian becomes 
\begin{equation}\label{delta}
\Delta=\frac{\partial^2}{\partial x^2}+\frac{1}{(1+x)}\frac{\partial}{\partial_x}+\frac{1}{(1+x)^2}\frac{\partial^2}{\partial y^2}+\frac{\partial^2}{\partial z^2}.
\end{equation}
In the new coordinate system, $x\rightarrow (x,y,z)$ is the ray orthogonal to $\partial\Omega$ at $(0,y,z)\in\partial\Omega$. Any point in $Q\in\Omega$ can be written under the form $Q=(0,y,z)+x\vec{\nu}_{(y,z)}$, where $(y,z)$ is the orthogonal projection of $Q$ on $\partial\Omega$ and $\vec{\nu}_{(y,z)}$ the outward unit normal to $\partial\Omega$ pointing towards $\Omega$. The dual variable to $(x,y,z)$ is denoted $(\xi,\eta,\vartheta)$. The principal symbol of $\partial^2_t-\Delta$ associated to \eqref{delta} is $p(x,\xi,\eta,\vartheta,\tau)=-\tau^2+\xi^2+(1+x)^{-2}\eta^2+\vartheta^2$. The time variable and its dual are $t$ and $\tau$. We let $\mathcal{Q}=\{(x,y,z,t,\xi,\eta,\vartheta,\tau),x=0\}$, $\mathcal{P}=\{(x,y,z,t,\xi,\eta,\vartheta,\tau), p=0\}$.
The cotangent bundle of $\partial\Omega\times\mathbb{R}$ is the quotient of $\mathcal{Q}$ by the action of translation in $\xi$, and we take as coordinates $(y,z,t,\eta,\vartheta,\tau)$. A point $(y,z,t,\eta,\vartheta,\tau)\in T^{*}(\partial\Omega\times\mathbb{R})$ is classified as one of three distinct types:
it is said to be \emph{hyperbolic} if there are two distinct nonzero real solutions $\xi$ to $p|_{x=0}=0$. These two solutions yield two distinct bicharacteristics, one of which enters $\Omega$ as $t$ increases (the \emph{incoming ray}) and one which exits $\Omega$ as $t$ increases (the \emph{outgoing ray}). The point is \emph{elliptic} if there are no real solutions $\xi$ to $p|_{x=0}=0$. In the remaining case $\tau^2=\eta^2+\vartheta^2$, there is an unique solution $\xi=0$ to $p|_{x=0}=0$ which yields a glancing ray, and the point is said to be a \emph{glancing point}. A glancing ray has exactly second order contact with the boundary if we have in addition $\eta^2\frac{d}{dx}(1+x)^{-2}|_{x=0}=-\eta^2/2<0$, which means if $\eta\neq 0$. We set $\alpha=\eta/\tau$, $\gamma=\vartheta/\tau$ : the glancing condition becomes $\alpha^2+\gamma^2=1$, while the hyperbolic (or elliptic) regime satisfy $1-\alpha^2-\gamma^2>0$ (or $1-\alpha^2-\gamma^2<0$). A point in $T^*(\partial\Omega\times \mathbb{R})$ such that $1\geq \alpha^2>0$ may be a glancing point of order exactly two. When $\alpha=0$, it is a glancing point of order $\infty$ (as, in this case, $H_p^jx=0$ for all $j\geq 1$).
\begin{rmq}
When $1-\gamma^2-\alpha^2\geq 1/16$, then on the boundary $\xi^2/\tau^2=(1-\gamma^2-\alpha^2)\geq 1/16$ in which case the corresponding point in the cotangent bundle is hyperbolic. The proof of Theorem \ref{thmdisp3D} for such points follows as in the case of the half-space, so we will focus on the situation  $1-\gamma^2-\alpha^2\leq 1/16$, when $|\xi/\tau|\lesssim 1/4$.
\end{rmq}

Let $\Delta$ be the Laplacian in $\mathbb{R}^3$, then the solution $u_{free}(Q,Q_0,t)$ to the free wave equation $(\partial^2_t-\Delta) u_{free}=0$ in $\mathbb{R}^3$ with $u_{free}|_{t=0}=\delta_{Q_0}$, $\partial_t u_{free}|_{t=0}=0$, where $\delta_{Q_0}$ is the Dirac distribution at $Q_0\in\mathbb{R}^3$, is given by :
\begin{equation}\label{ufree}
u_{free}(Q,Q_0,t):=\frac{1}{(2\pi)^3}\int e^{i(Q-Q_0)\xi}\cos(t|\xi|)d\xi.
\end{equation}
If $w_{in}(Q,Q_0,\tau):=\widehat{1_{t>0}u_{free}}(Q,Q_0,\tau)$ denotes its Fourier transform in time, then the following holds :
\begin{equation}\label{vfreeball}
w_{in}(Q,Q_0,\tau)=\frac{i \tau}{4\pi}\frac{e^{-i\tau|Q-Q_0|}}{|Q-Q_0|}.
\end{equation}

Consider the equation \eqref{WE} with initial data $(\delta_{Q_0},0)$, where $Q_0\in\Omega$ is an arbitrary point 
\begin{equation} \label{WEOC} 
\left\{ \begin{array}{l}
   (\partial^2_t-
 \Delta_D) u=0  \;\; \text{ in } \Omega\times \mathbb{R}, \\ 
 u|_{t=0}=\delta_{Q_0}, \; \partial_t u|_{t=0}= 0, \quad u|_{\partial\Omega}=0. 
 \end{array} \right.
 \end{equation}
Let $u(Q,Q_0,t)=\cos(t\sqrt{-\Delta_D})(\delta_{Q_0})(Q)$ denote the solution to \eqref{WEOC} : in order to prove Theorem \ref{thmdisp3D} we construct $u$ for all $t$ and then deduce global in time dispersive bounds. We may assume, without loss of generality, that $\text{dist}(Q_0,\partial\Omega)\geq \text{dist}(Q,\partial\Omega)$ : indeed, when this is not the case we can use the symmetry of the Green function to change $Q_0$ and $Q$. We may assume that, in the coordinates $(r,\theta,z)$, the source point is of the form $Q_0=(s,0,0)$, where $s-1>0$ represents the distance from $Q_0$ to the boundary. 
Let $Q$ be an arbitrary point of $\Omega$, then $Q:=(r\cos \theta,r\sin\theta, z)$.
We introduce the distance between $Q$ and $Q_0$ as follows
\begin{equation}\label{Phi}
\tilde\phi(r,\theta,z,s):=|Q-Q_0|=\sqrt{r^2-2sr\cos \theta+s^2+z^2}.
\end{equation}
In the normal coordinates $(x,y,z)$ we have $Q_0=(s-1,\frac{\pi}{2},0)$ and $Q=((1+x)\sin y, (1+x)\cos y,z)$; letting $\phi(x,y,z,s):=\tilde\phi(1+x,\frac{\pi}{2}-y,z,s)$, we have $\phi(x,y,z,s)=\sqrt{(1+x)^2-2s(1+x)\sin y+s^2+z^2}$. The coordinates $(x,y,z)$ will be particularly useful when working near a glancing point; near hyperbolic (or elliptic) points we keep the cylindrical coordinates $(r,\theta,z)$. We will switch them when necessary. 
\vskip1mm

Let $u_{free}$ be given in \eqref{ufree}. 
By finite speed of propagation, for any sufficiently small time $0<t< d(Q_0,\partial\Omega)$, the solution to \eqref{WEOC} in $\Omega$ is just $1_{t>0}u_{free}$, whose Fourier transform equals $w_{in}$. In the following, we decompose $w_{in}$ according to the initial directions of propagation as follows :
let $\psi_0(\beta)$ be a smooth function supported near $1$, equal to $1$ for $1\geq \beta \geq 1/36$, equal to $0$ for $\beta \leq 1/64$ and such that $0\leq \psi_0\leq 1$. Let also $
\psi\in C^{\infty}_0(1/4,4)$ equal to $1$ near $1$ such that $1-\psi_0(\beta)=\sum_{j\geq1}\psi(2^{2j}\beta)$. Write $w_{in}=w_0+\sum_{j\geq 1}w_{j}$,
with
\begin{equation}\label{dew1}
w_0(Q,Q_0,\tau)=\frac{\tau^2}{(2\pi)^2}\frac{i\tau}{4\pi}\int \frac{\psi_0(1-\gamma^2)}{\phi(x,\tilde y,\tilde z,s)}e^{i\tau((y-\tilde y)\alpha+(z-\tilde z)\gamma)}e^{-i\tau\phi(x,\tilde y,\tilde z,s)} d\alpha d\gamma d\tilde y d\tilde z,
\end{equation}
\begin{equation}\label{dew22j}
w_{j}(Q,Q_0,\tau)=\frac{\tau^2}{(2\pi)^2}\frac{i\tau}{4\pi}\int \frac{\psi(2^{2j}(1-\gamma^2))}{\phi(x,\tilde y,\tilde z,s)}e^{i\tau((y-\tilde y)\alpha+(z-\tilde z)\gamma)}e^{-i\tau\phi(x,\tilde y,\tilde z,s)} d\alpha d\gamma d\tilde y d\tilde z.
\end{equation}
Let $\psi_j(\beta):=\psi(2^{2j}\beta)$. We set $u^+_{free}:=1_{t>0}u_{free}=\int e^{i\tau t}w_{in}(Q,Q_0,\tau) d\tau$. Using \eqref{dew1} and \eqref{dew22j}, we decompose as follows $u_{free}=u_{free,0}+\sum_{j\geq 1} u_{free,j}$ and set $u^+_{free,j}:=1_{t>0}u_{free,j}$, where $\mathcal{F}(u^+_{free,j})=w_{j}$. \\

The paper is organized as follows : in Section \ref{sec:cylinder} we consider $h\in (0,h_0)$ for some small $h_0\in(0,1)$ and $s\geq \sqrt{2}$ and we show that, for all $u^+_{free,j}$ with $0\leq j\leq\frac 13\log_2(s/h)$, we may construct the outgoing wave in a similar way to that used in \cite{IL} in the exterior of a ball as each $w_{j}$ hits the obstacle at hyperbolic or glancing points of order exactly $2$. The assumptions on $s$ and $j$ are necessary to construct the reflected waves near glancing points and to make sure that stationary phase methods do apply. In Section \ref{sectdispcyl} we obtain dispersive bounds first for each $j\leq \frac 13\log_2(s/h)$ and show that the sum over $j$ is still bounded as expected. Both Sections \ref{sec:cylinder} and \ref{sectdispcyl} deal separately with the glancing and hyperbolic regimes, and also with the cases $\text{dist}(Q,\partial\Omega)\geq\sqrt{2}-1$ or $\text{dist}(Q,\partial\Omega)\leq\sqrt{2}-1$ as each case needs to be handled in a different way. In Section \ref{secBesHan} we consider $h\in (0,h_0)$ and either $s\leq \sqrt{2}$ or $s\geq \sqrt{2}$ and $j\gtrsim  \frac 13\log_2(s/h)$ : in these cases we cannot construct the reflected waves as before, either because the data is too close to the boundary or because the phase functions of $w_{j}$ don't oscillate anymore. We obtain an explicit parametrix in terms of Bessel and Hankel functions and proceed with the dispersive bounds. In the last Section we explain why the last parametrix still allows to obtain dispersion in the case of small frequencies.

\section{Parametrix for \eqref{WE} when $s\geq \sqrt{2}$, $h\in (0,h_0)$ and $2^{-3j}s/h\gtrsim 1$}\label{sec:cylinder}

We consider the source point to be of the form $Q_0=(s,0,0)$, where $s-1$ represents the distance from $Q_0$ to the $\partial\Omega$. In this section we consider $s\geq\sqrt{2}$. Let $h_0\in(0,1)$ be small and $h\in (0,h_0)$.

\begin{lemma} 
Let $Q_0=(s,0,0)$ with $s\geq \sqrt{2}$ and $j$ such that $2^{-3j}s/h\gtrsim 1$. Then $u_{free,j,h}(\cdot,Q_0,t)$ solves the free wave equation and $u_{free,j,h}(\cdot,Q_0,0)|_{\partial\Omega}=O(h^{\infty})$. Moreover, $u_{free,j,h}(P,Q_0,t)|_{P\in\partial\Omega}=O((h/t)^{\infty})$ for $P=(0,\cdot,z)$ with $|z|\geq 4t$. 
\end{lemma}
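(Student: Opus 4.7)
The first assertion is immediate from the construction: $u_{free,j,h}$ is the result of applying an angular cutoff $\psi_j(1-(\vartheta/\tau)^2)$ and the time-frequency multiplier $\chi(hD_t)$ to $u^+_{free}$, both of which commute with $\partial_t^2-\Delta$, so the free-space solution property is inherited on all of $\mathbb{R}^3$ (and hence on $\Omega$).

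The strategy for the second assertion is stationary phase. After performing the $\alpha$-integration in \eqref{dew22j}, which yields $(2\pi/|\tau|)\delta(y-\tilde y)$, $w_j$ collapses to a two-dimensional oscillatory integral in $(\gamma,\tilde z)$; for $P=(0,y,z)\in\partial\Omega$, after rescaling $\tilde\tau=h\tau$, we obtain
$$u_{free,j,h}(P,Q_0,0)=\frac{i}{8\pi^2h^3}\int\chi(\tilde\tau)\tilde\tau^2\int\frac{\psi_j(1-\gamma^2)}{\phi(0,y,\tilde z,s)}e^{i(\tilde\tau/h)\Phi_0}\,d\gamma\,d\tilde z\,d\tilde\tau,$$
with $\Phi_0=(z-\tilde z)\gamma-\phi(0,y,\tilde z,s)$ and $\phi(0,y,\tilde z,s)=\sqrt{1+s^2-2s\sin y+\tilde z^2}\geq s-1\geq\sqrt{2}-1$. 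The $(\gamma,\tilde z)$-critical point is $(\tilde z_c,\gamma_c)=(z,-z/\phi_0)$, with $\phi_0=\phi(0,y,z,s)$. If $\gamma_c$ lies outside the support of $\psi_j$, then $|\nabla_{\gamma,\tilde z}\Phi_0|$ stays uniformly bounded below on the support of the amplitude and iterated integration by parts yields $O(h^\infty)$. Otherwise, stationary phase in $(\gamma,\tilde z)$ produces an oscillation $e^{-i\tilde\tau\phi_0/h}$, and the remaining $\tilde\tau$-integral is the Fourier transform of a smooth compactly-supported function at the large frequency $\phi_0/h\geq(\sqrt{2}-1)/h$, yielding again $O(h^\infty)$.

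For the third assertion the analogous reduction produces the phase $\Phi_t=t+(z-\tilde z)\gamma-\phi(0,y,\tilde z,s)$. The joint critical equations in $(\gamma,\tilde z,\tilde\tau)$ force $\tilde z=z$, $\gamma_c=-z/t$ and $\phi_0=t$, which together imply $|z|\leq t$ and contradict $|z|\geq 4t$. Hence no joint critical point lies in the support of $\psi_j$ (where $|\gamma|\sim 1$). My plan is then a case analysis based on $|\tilde z|$ and on the sign of $\gamma\tilde z$: in each subregion one establishes either $|\partial_\gamma\Phi_t|=|z-\tilde z|\gtrsim t$ (when $|\tilde z|\leq|z|/2$), or $|\partial_{\tilde z}\Phi_t|=|\gamma+\tilde z/\phi(0,y,\tilde z,s)|\gtrsim 1$ (when $\gamma\tilde z>0$), or a pointwise bound $|\Phi_t|\gtrsim t$ (in the remaining subregime), and then integrates by parts iteratively in the appropriate variable. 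Each iteration produces a factor of order $h/t$; the losses $2^{jN}$ coming from derivatives of $\psi_j$ (whose support has width $\sim 2^{-2j}$ around $|\gamma|=1$) are dominated using $2^{-3j}s/h\gtrsim 1$, which keeps $h\cdot 2^{j}$ small. Iterating gives $O((h/t)^\infty)$.

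The main obstacle will be the third assertion in the grazing subregime $\gamma\tilde z<0$ with $|\tilde z|$ large, where the phase comes close to being critical in several directions at once and $|\Phi_t|$ can in fact vanish; the uniform lower bound required for integration by parts there must be obtained by carefully exploiting the hypothesis $|z|\geq 4t$ and the amplitude's decay $\sim 1/\phi(0,y,\tilde z,s)$ at infinity in $\tilde z$. Geometrically, this encodes the finite-speed-of-propagation fact that rays in the angular support of $\psi_j$ travel almost along the $z$-axis with unit speed and cannot reach $|z|\geq 4t$ by time $t$.
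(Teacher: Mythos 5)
Your reduction — integrating $\alpha$ out to collapse to a $(\gamma,\tilde z,\tau)$-phase — is in content equivalent to the paper's, which keeps the full phase $\tau(t+(z-\tilde z)\gamma+(y-\tilde y)\alpha-\phi(0,\tilde y,\tilde z,s))$ and applies non-stationary phase jointly, and you have identified the two governing facts: at $t=0$ the spatial critical value $-\phi(0,y,z,s)\leq-(\sqrt2-1)<0$ (using $s\geq\sqrt2$), and for $|z|\geq 4t$ the joint critical equations are incompatible. However, the key step in your second assertion is incorrect. When $\gamma_c=-z/\phi_0$ lies outside $\mathrm{supp}\,\psi_j$, the gradient $\nabla_{\gamma,\tilde z}\Phi_0$ is \emph{not} uniformly bounded below on $\mathrm{supp}\,\psi_j$: both components $\partial_\gamma\Phi_0=z-\tilde z$ and $\partial_{\tilde z}\Phi_0=-(\gamma+\tilde z/\phi)$ vanish simultaneously at $(\tilde z,\gamma)=(z,\gamma_c)$, and the distance from $\gamma_c$ to $\mathrm{supp}\,\psi_j$ can be arbitrarily small. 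What restores a uniform lower bound — and this is exactly what the paper's formulation buys by treating $\tau$ as an integration variable on an equal footing — is that near this spatial critical point $\partial_\tau(\tau\Phi_0)=\Phi_0\approx-\phi(0,y,z,s)$ is bounded away from zero, and the $\tau$-derivatives fall harmlessly on $\chi(h\tau)$. Your alternative two-step scheme (stationary phase in $(\gamma,\tilde z)$, then IBP in $\tau$) would further require checking that the stationary-phase remainder and the $\tau$-regularity of the resulting amplitude stay controlled as $j$ approaches $j(s,h)$, where the relevant parameter $\tau(1-\gamma^2)^{3/2}\phi_0\sim 2^{-3j}s/h$ is merely $\gtrsim1$ rather than large; this is not addressed.

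The third-assertion case analysis also needs repair. In your remaining subregime $|\tilde z|>|z|/2$, $\gamma\tilde z<0$ the claimed bound $|\Phi_t|\gtrsim t$ fails in general: taking $z>0$ and $\tilde z<-|z|/2$ (so $\gamma>0$), $\Phi_t$ can vanish, and what is large there is instead $|\partial_\gamma\Phi_t|=|z-\tilde z|=z+|\tilde z|\geq\tfrac32|z|\geq6t$; the pointwise bound $|\Phi_t|\geq t$ is available only in the half $\tilde z>0$. Likewise your bound $|\partial_{\tilde z}\Phi_t|=|\gamma+\tilde z/\phi|\gtrsim1$ for $\gamma\tilde z>0$ presumes $|\gamma|\gtrsim1$, which fails for $j=0$ (where $1-\gamma^2$ ranges over $[1/64,1]$ so $\gamma$ may be near $0$); there again one must fall back on $|\partial_\tau(\tau\Phi_t)|=|\Phi_t|\geq\phi-t\gtrsim1$ at the spatial critical point. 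Once the cases are reorganized around the joint $(\gamma,\tilde z,\tau)$-gradient, as the paper's one-line proof intends, the estimate follows; your concern that the grazing subregime is the hard part is misdirected — the genuine bookkeeping burden, which your sketch only gestures at, is controlling the factor $2^{2j}$ incurred each time a derivative lands on $\psi_j(1-\gamma^2)$ against the $\sim2^{-2j}$ measure of its support and the uniform joint-gradient lower bound, and this should be spelled out.
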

\begin{proof}
The first statement follows from the fact that $\Delta$ commutes with $D_z$; for $j$ as above, the second statement follows using non-stationary phase arguments for the phase $\tau(t+(z-\tilde z)\gamma+(y-\tilde y)\alpha-\phi(0,\tilde y,\tilde z,s))$ of $u_{free,j,h}$. If $|z|\geq 4t$, the phase is also non-stationary with respect to $\tau$ which allows to conclude.
\end{proof}
Our goal in this section is to construct, for each $0\leq j\leq \frac{1}{3}\log_2(s/(hM))$, the solution $u_{j}$ to the Dirichlet wave equation on $\Omega$ whose incoming part (before reflection) equals $u_{free,j}$. To do that, we first set
\begin{equation}\label{defUbar}
\underline{u}_{j}(Q,Q_0,t):=
\left\{ \begin{array}{l}
u_{j}(Q,Q_0,t), \text{ if } Q\in\Omega,\\
0, \text{ if } Q\in \overline{\Theta}.
 \end{array} \right.
\end{equation}
Then, using Duhamel formula and with $u_j^+:=1_{t>0}u_j$, $\underline{u}_j$ reads as follows
 \begin{equation}\label{Gbarform}
 \underline{u}_j|_{t>0} =u^+_{free,j}-u^{\#}_j,\quad u^{\#}_j (Q,Q_0,t): 
 =\int_{\partial\Omega} \frac{\partial_{\nu} u^+_j(P,Q_0,t-|Q-P|)}{4\pi |Q-P|}d\sigma (P).
 \end{equation}
Let $h_0\in(0,1)$ small enough and $h\in (0,h_0)$. Let $\chi\in C^{\infty}_0([\frac 12, 2])$ be a smooth cutoff equal to $1$ on $[\frac 34, \frac 32]$ and such that $0\leq \chi\leq 1$.
As we are interested in evaluating $\chi(hD_t)\underline{u}_j(Q,Q_0,t)$, let
\begin{equation}\label{udiezh}
u^+_{free,j,h}:=\chi(hD_t)u^+_{free,j}, \quad u^{\#}_{j,h}:=\chi(hD_t)u^{\#}_j(Q,Q_0,t).
\end{equation}
As the free wave flow $u_{free,j,h}$ satisfies the usual dispersive estimates, 
we are reduced to evaluating the sum over $j\leq \frac 13 \log_2(s/(M))$ of $u^{\#}_{j,h}(Q,Q_0,t)$ (or, when possible, of $\chi(hD_t)u^{+}_j:=u^{+}_{j,h}$). 
Using \eqref{Gbarform} we have
\begin{equation}\label{vhform}
u^{\#}_{j,h}(Q,Q_0,t)
=\int e^{it\tau}\chi(h\tau)
 \int_{P\in\partial\Omega}\mathcal{F}(\partial_\nu u^+_j|_{\partial\Omega})(P,Q_0,\tau)
  \frac{1}{4\pi |Q-P|}e^{-i\tau|Q-P|}
d\sigma(P) d\tau,
\end{equation}
where $\mathcal{F}(\partial_\nu u^+_j|_{\partial\Omega})(P,Q_0,\tau)$ denotes the Fourier transform in time of $\partial_\nu u^+_j|_{\partial\Omega}(P,Q_0,t)$.
\begin{dfn}
For a source point $Q_0$ as above, we define its apparent contour $\mathcal{C}_{Q_0}$ as  the set of points $P\in\partial\Omega$ such that the ray $Q_0P$ is tangent to $\partial\Omega$ : in other words, for $\tilde\phi$ defined in \eqref{Phi}, we have
\[
\mathcal{C}_{Q_0}:=\{P\in\partial\Omega \text{ with coordinates } (1,\theta,z) \text{ such that } \partial_r \tilde\phi(1,\theta,z,s)=0 \}.
\]
As $\partial_r\tilde\phi=(r-s\cos\theta)/\tilde\phi$ cancels at $r=1$ when $\cos\theta=\frac 1s$, we find $\mathcal{C}_{Q_0}:=\{ P=(1,\arccos (1/s),z), z\in \mathbb{R}\}$. In the coordinates $(x,y,z)$ we have $\mathcal{C}_{Q_0}=\{P=(0,y,z), y=\arcsin(1/s)\}$.
In the following we set $\theta_*:=\arccos(1/s)=\frac{\pi}{2}-\arcsin(1/s)$ and $y_*:=\arcsin(1/s)$.
\end{dfn}
\begin{dfn}\label{defjjsh}
Let $h\in (0,h_0)$. We define $j(s,h):=\sup\{j,2^{-3j}s/h\geq 1\}$ so that $2^{-3j(s,h)}s/h\sim 1$.
\end{dfn}
On the support of $\psi_j(1-\gamma^2)$ we have $\sqrt{1-\gamma^2}\sim 2^{-2j}$ and in this section we consider only $0\leq j\leq j(s,h)$. In the following we deal separately with the case $\frac{\alpha}{\sqrt{1-\gamma^2}}$ near $1$, when the possible glancing points have exactly second order contact with the boundary and the case $\frac{\alpha}{\sqrt{1-\gamma^2}}$ outside a small neighborhood of $1$. 

Let $\chi_0\in C^{\infty}_0([-2,2])$ and equal to $1$ on $[-\frac 32,
\frac 32]$, fix $\varepsilon>0$ small enough and set $\chi_{\varepsilon}(\cdot):=\chi_0((\cdot-1)/\varepsilon)$. We let $w_{j,gl}$ be defined by \eqref{dew1}, \eqref{dew22j} with additional cutoff $\chi_{\varepsilon}(\frac{\alpha}{\sqrt{1-\gamma^2}})$ supported for $|\frac{\alpha}{\sqrt{1-\gamma^2}}-1|\leq 2\varepsilon$ .
Define also $w_{j,he}$ as in \eqref{dew1}, \eqref{dew22j} with additional cutoff $1- \chi_{\varepsilon_1}(\frac{\alpha}{\sqrt{1-\gamma^2}})$. Then $u^+_{free,j}=u^+_{free,j,gl}+u^+_{free,j,he}$,
\[
u^+_{free,j,gl}:=\int e^{it\tau} w_{j,gl} d\tau,\quad u^+_{free,j,he}:=\int e^{it\tau} w_{j,he} d\gamma, \quad u^+_{free,j,h}=\chi(hD_t)u^+_{free,j}.
\]

\subsection{The glancing part of $u^{+}_j$ for $0\leq j\leq j(s,h)$}\label{sectgl} 
We construct $u^+_{j,gl}$, then $\partial_{\nu}u^+_{j,gl}$, in order to obtain the "glancing part" of $u^{\#}_j$ from formula \eqref{Gbarform}. For $j=0$, the following result due to Melrose and Taylor holds:
\begin{prop}\label{MT}
Microlocally near a glancing point of exactly second order contact with the boundary there exist smooth phase functions $\iota(x,y,z,\alpha,\gamma)$ and $\zeta(x,y,z,\alpha,\gamma)$ such that $\phi_{\pm}=\iota\pm(-\zeta)^{3/2}$ satisfy the eikonal equation and there exist symbols $a,\,b$ satisfying the transport equation such that, for any parameters $\alpha,\gamma$ in a conic neighborhood of a glancing direction and for $\tau>1$ large enough,
\begin{equation}
G_{\tau}(x,y,z,\alpha,\gamma):=e^{\ii\tau\iota(x,y,z,\alpha,\gamma)}\Big(aA_+(\tau^{2/3}\zeta)+b\tau^{-1/3}A'_+(\tau^{2/3}\zeta)\Big)A^{-1}_+(\tau^{2/3}\zeta_0)
\end{equation} 
satisfies 
$(\tau^2+\Delta)G_{\tau}=e^{\ii\tau\iota(x,y,z,\alpha,\gamma)}\Big(a_{\infty}A_+(\tau^{2/3}\zeta)+b_{\infty}\tau^{-1/3}A'_+(\tau^{2/3}\zeta)\Big)A^{-1}_+(\tau^{2/3}\zeta_0)$,
where the symbols verify $a_{\infty}$, $b_{\infty}\in O(\tau^{-\infty})$ and where we set $\zeta_0=\zeta|_{x=0}$. Moreover, the following properties hold
\begin{itemize}
\item $\iota$ and $\zeta$ are homogeneous of degree $0$ and $-1/3$ and satisfy $\langle d\iota,d\iota\rangle-\zeta\langle d\zeta,d\zeta\rangle=1$,
$\langle d\iota,d\zeta\rangle=0$,
where $\langle\cdot,\cdot\rangle$ is the polarization of $p$; the phase $\zeta_0$ is independent of $y,z$ so that $\zeta_0(\alpha,\gamma)$ vanishes at a glancing direction; the diffractive condition means that $\partial_x\zeta|_{x=0}<0$ near a glancing point ; 
\item the symbols $a(x,y,z,\alpha,\gamma)$ and $b(x,y,z,\alpha,\gamma)$ belong to the class $\mathcal{S}^0_{(1,0)}$  and satisfy the appropriate transport equations. Moreover $a|_{x=0}$ is elliptic at the glancing point with essential support included in a small, conic neighborhood of it, while $b|_{x=0}=0$.
\end{itemize}
\end{prop}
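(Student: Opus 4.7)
The proof is the classical Melrose--Taylor WKB construction adapted to the cylindrical Laplacian \eqref{delta}. Substituting the ansatz into $\tau^2+\Delta$ in the normal coordinates $(x,y,z)$ and using the Airy identity $A''_+(w)=wA_+(w)$, every second derivative of $A_+(\tau^{2/3}\zeta)$ reduces to a combination of $A_+(\tau^{2/3}\zeta)$ and $\tau^{-1/3}A'_+(\tau^{2/3}\zeta)$. Setting $U:=A_+(\tau^{2/3}\zeta)$, $V:=\tau^{-1/3}A'_+(\tau^{2/3}\zeta)$, the formulas $\partial_j U=\tau(\partial_j\zeta)V$ and $\partial_j V=\tau(\partial_j\zeta)\zeta\,U$ propagate through the whole computation, so the result can be written
$$(\tau^2+\Delta)G_\tau \;=\; e^{i\tau\iota}\bigl(P_a[a,b]\,U+P_b[a,b]\,V\bigr)A_+^{-1}(\tau^{2/3}\zeta_0),$$
for explicit differential expressions $P_a,P_b$ in $\iota,\zeta,a,b$. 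The program is then to kill the top-order symbols of $P_a,P_b$ by choosing $\iota,\zeta$, then their subleading parts by choosing $a,b$, and finally to iterate indefinitely in powers of $\tau^{-1}$.

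Vanishing of the top-order contributions is equivalent to the coupled eikonal system $\langle d\iota,d\iota\rangle-\zeta\langle d\zeta,d\zeta\rangle=1$ and $\langle d\iota,d\zeta\rangle=0$, where $\langle\cdot,\cdot\rangle$ is the polarization of the principal symbol $p$. To solve it near a glancing point of exactly second order contact, I invoke Melrose's equivalence of glancing hypersurfaces: there is a local canonical transformation, smooth up to $\partial\Omega$, conjugating the pair $(\calP,\calQ)$ to the Friedlander model with principal symbol $\xi^2-\zeta(x)$ and boundary $\{x=0\}$, for which $\iota^{\mathrm{mod}}=y$ and $\zeta^{\mathrm{mod}}=-x$ are obvious solutions. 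Pulling back through the symplectomorphism produces $\iota,\zeta$ of the required homogeneity in $(\eta,\vartheta,\tau)$, smooth in a conic neighborhood of the glancing direction. The assumption of exact second order contact is precisely what yields $\partial_x\zeta|_{x=0}<0$ (the diffractive condition). Finally, translation invariance of $\Delta$ in $(y,z)$ transfers to the construction and gives that $\zeta_0=\zeta|_{x=0}$ depends only on $(\alpha,\gamma)$, vanishing on the glancing set $\{\alpha^2+\gamma^2=1\}$.

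With $\iota,\zeta$ fixed, equating the next-order terms of $P_a,P_b$ to zero produces a coupled pair of first-order transport equations for $(a,b)$ along the Hamiltonian flow of $\iota$. Expanding $a\sim\sum_{k\geq 0}a_k\tau^{-k}$ and $b\sim\sum_{k\geq 0}b_k\tau^{-k}$, at each step one obtains a $2\times 2$ ODE system along the bicharacteristics, with source fed by the previously determined $(a_\ell,b_\ell)_{\ell<k}$; integration from a transversal initial surface produces smooth symbols in $\mathcal{S}^0_{(1,0)}$. The initialization $b_0|_{x=0}=0$ with $a_0|_{x=0}$ elliptic at the glancing direction is forced by the matching of the boundary trace, once the prefactor $A_+^{-1}(\tau^{2/3}\zeta_0)$ has absorbed the Wronskian of $A_+$. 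A Borel resummation then produces honest symbols $a,b$ with the stated properties and leaves a residual of size $O(\tau^{-\infty})$, giving the claimed $a_\infty,b_\infty$.

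The main obstacle is the phase step: obtaining smooth, correctly-homogeneous solutions $\iota,\zeta$ of the coupled eikonal system uniformly up to the glancing set genuinely requires Melrose's symplectic normal form for diffractive hypersurfaces, and it is the Airy ansatz that converts that normal form into a usable parametrix rather than an abstract equivalence. Once the phases are in hand, the coupled transport equations and the Borel summation are routine WKB bookkeeping, so the entire proof reduces to carefully transporting the Friedlander model through the equivalence.
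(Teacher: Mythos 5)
Your sketch follows the standard Melrose--Taylor construction as found in \cite{meta87}, \cite{zw90}, \cite{SmSo94Duke}: Airy ansatz, reduce $(\tau^2+\Delta)G_\tau$ via $A_+''(w)=wA_+(w)$ to a hierarchy in $\tau^{-1}$, solve the coupled eikonal system by invoking Melrose's equivalence of glancing hypersurfaces to conjugate to the Friedlander model, then solve transport equations and Borel-resum. That is a correct account of why the proposition is true in general, and the differentiation identities you record for $U=A_+(\tau^{2/3}\zeta)$, $V=\tau^{-1/3}A'_+(\tau^{2/3}\zeta)$ are right.

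However, the paper takes a genuinely different route. It states Proposition~\ref{MT} as a cited result ("due to Melrose and Taylor") and does \emph{not} prove it by the abstract normal-form argument; instead it immediately writes the eikonal system \eqref{MT-para} in the cylindrical coordinates and observes that the translation invariance of $\Delta$ in $(y,z)$ collapses it completely: $\iota(y,z,\alpha,\gamma)=y\alpha+z\gamma$ is linear, and $\zeta(x,\alpha,\gamma)=\alpha^{2/3}\tilde\zeta\big((1+x)\sqrt{1-\gamma^2}/\alpha\big)$ is reduced to the \emph{scalar} ODE $-\tilde\zeta(\tilde\zeta')^2+1/\rho^2=1$ solved explicitly in Lemma~\ref{lemzeta}. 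This bypasses Melrose's symplectic equivalence entirely, trading generality for a closed formula. The explicit $\tilde\zeta$ (formulas \eqref{tildezet>}, \eqref{tildezet<}) is not incidental: the remainder of Sections~\ref{sec:cylinder}--\ref{secBesHan} leans heavily on the exact form of $\tilde\zeta$ and its derivative at $x=0$ (e.g.\ in computing $\zeta_0$, $b_\partial$, and in matching against the Bessel/Hankel asymptotics of Section~\ref{secBesHan}, where the same $\tilde\zeta$ reappears in \eqref{Hnn}). Your abstract argument establishes existence and smoothness but gives no access to these formulas, so it would not support the subsequent dispersive estimates. A small further caveat: your claim that $b|_{x=0}=0$ follows from "matching of the boundary trace, once the prefactor $A_+^{-1}(\tau^{2/3}\zeta_0)$ has absorbed the Wronskian" inverts the logic --- $b|_{x=0}=0$ is a normalization arranged in the construction of $(a,b)$ via the Airy equation trade-off, and the Wronskian identity $A'A_+-A'_+A=\ii e^{-\ii\pi/3}$ only enters afterward, in the proof of the corollary computing $\partial_x u^+_{0,gl}|_{\partial\Omega}$.
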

The functions $\iota$ and $\zeta$ of the Melrose-Taylor parametrix solve the system of equations
\begin{equation}\label{MT-para}
\left\{\begin{aligned}
&(\partial_x\iota)^2+\frac{(\partial_y\iota)^2}{(1+x)^2}+(\partial_z\iota)^2-\zeta\Big((\partial_x\zeta)^2+\frac{(\partial_y\zeta)^2}{(1+x)^2}+(\partial_z\zeta)^2\Big)=1,\\
&\partial_x\iota \partial_x\zeta+\frac{\partial_y\iota \partial_y\zeta}{(1+x)^2}+\partial_z\iota \partial_z\zeta=0.
\end{aligned}\right.
\end{equation}
The system \eqref{MT-para} admits the pair of solutions 
$\iota(y,z,\alpha,\gamma)=y\alpha+z\gamma$,   $\zeta(x,\alpha,\gamma)=\alpha^{2/3}\tilde\zeta((1+x)\sqrt{1-\gamma^2}/\alpha)$,
where for $\rho:=(1+x)\frac{\sqrt{1-\gamma^2}}{\alpha}$, $\tilde \zeta$ is the (unique) solution to
$\frac{1}{\rho^2}-\tilde{\zeta}(\rho)[\tilde{\zeta}'(\rho)]^2=1$, $\tilde\zeta(1)=0$.
\begin{lemma}\label{lemzeta}
The equation $-\tilde{\zeta}(\partial_\rho\tilde{\zeta})^2+1/\rho^2=1$, $\tilde{\zeta}(1)=0$ has a unique solution of the form
\begin{equation}\label{tildezet>}
\frac23 (-\tilde{\zeta}(\rho))^{3/2}=\int_{1}^{\rho}\frac{\sqrt{w^2-1}}{w}dw=\sqrt{\rho^2-1}-\arccos\left(\frac{1}{\rho}\right),
\end{equation}
if $\rho>1$, while for $\rho<1$ we have 
\begin{equation}\label{tildezet<}
\frac23\tilde{\zeta}(\rho)^{3/2}=\int_{\rho}^1\frac{\sqrt{1-w^2}}{w}dw=\log [(1+\sqrt{1-\rho^2})/\rho]-\sqrt{1-\rho^2}.
\end{equation}
We note that at $\rho=1$ we have $\tilde{\zeta}=0$ and $\lim_{\rho\rightarrow1} \frac{(-\tilde \zeta)(\rho)}{\rho-1}=2^{1/3}$.
\end{lemma}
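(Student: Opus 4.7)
The plan is to recognize the equation as separable in a disguised form and integrate directly, handling the two regimes $\rho>1$ and $\rho<1$ separately since the sign of $1-1/\rho^2$ forces opposite signs of $\tilde\zeta$. For $\rho>1$ we have $-\tilde\zeta(\partial_\rho\tilde\zeta)^2=(\rho^2-1)/\rho^2\geq 0$, so $\tilde\zeta\leq 0$, and the equation rearranges as
\begin{equation*}
\frac{d}{d\rho}\Bigl[\tfrac{2}{3}(-\tilde\zeta)^{3/2}\Bigr]=-\sqrt{-\tilde\zeta}\,\partial_\rho\tilde\zeta=\frac{\sqrt{\rho^2-1}}{\rho},
\end{equation*}
where the sign is fixed by requiring $\tilde\zeta$ to be continuous and decreasing through $\rho=1$ (so that the characteristic variety matches the glancing geometry). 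A symmetric manipulation on $\rho<1$ gives $\frac{d}{d\rho}\bigl[\tfrac{2}{3}\tilde\zeta^{3/2}\bigr]=-\sqrt{1-\rho^2}/\rho$. Integrating from (resp.~to) $\rho=1$ using $\tilde\zeta(1)=0$ then yields both claimed identities as soon as one computes the primitives.

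The two primitive computations are routine trigonometric substitutions. For $\rho>1$, I would put $w=\sec\theta$, so $\sqrt{w^2-1}/w\,dw=\tan^2\theta\,d\theta=(\sec^2\theta-1)\,d\theta$, and evaluate between $\theta=0$ and $\theta=\arccos(1/\rho)$, giving exactly $\sqrt{\rho^2-1}-\arccos(1/\rho)$. For $\rho<1$, it is quicker to verify the candidate $F(\rho):=\log[(1+\sqrt{1-\rho^2})/\rho]-\sqrt{1-\rho^2}$ by differentiation: a short calculation shows $F'(\rho)=-\sqrt{1-\rho^2}/\rho$ and $F(1)=0$, so $\int_\rho^1\sqrt{1-w^2}/w\,dw=F(\rho)$, as claimed.

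For the asymptotic $\lim_{\rho\to 1}(-\tilde\zeta)(\rho)/(\rho-1)=2^{1/3}$, I would set $\rho=1+\epsilon$ and Taylor-expand the integrand: $\sqrt{w^2-1}/w=\sqrt{2(w-1)}\,(1+O(w-1))$, so
\begin{equation*}
\tfrac{2}{3}(-\tilde\zeta(\rho))^{3/2}=\int_1^\rho\frac{\sqrt{w^2-1}}{w}\,dw=\tfrac{2\sqrt 2}{3}\epsilon^{3/2}(1+O(\epsilon)),
\end{equation*}
whence $(-\tilde\zeta)^{3/2}=\sqrt{2}\,\epsilon^{3/2}(1+O(\epsilon))$ and raising to the $2/3$ power gives $-\tilde\zeta=2^{1/3}\epsilon+O(\epsilon^2)$, i.e.~the stated limit. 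Matching with the $\rho<1$ branch (whose expansion uses $\log[(1+\sqrt{1-\rho^2})/\rho]-\sqrt{1-\rho^2}\sim \tfrac{2\sqrt 2}{3}(1-\rho)^{3/2}$ by the same Taylor argument) shows that $\tilde\zeta$ extends $C^1$ across $\rho=1$ with the same slope.

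The main subtlety is uniqueness, since the ODE has a turning point at $\rho=1$ where $\partial_\rho\tilde\zeta$ blows up and standard Picard theory does not apply. I would circumvent this by reformulating the problem in terms of the regular unknown $\xi(\rho):=\tfrac{2}{3}(-\tilde\zeta)^{3/2}$ (resp.~$\tfrac{2}{3}\tilde\zeta^{3/2}$), which satisfies the \emph{non-singular} first-order equation $\xi'(\rho)=\sqrt{\rho^2-1}/\rho$ (resp.~$-\sqrt{1-\rho^2}/\rho$) with initial value $\xi(1)=0$. Uniqueness of $\xi$ is immediate, and the asymptotic $-\tilde\zeta\sim 2^{1/3}(\rho-1)$ together with the sign convention guarantees that the inversion $\xi\mapsto\tilde\zeta$ is unique too, completing the proof.
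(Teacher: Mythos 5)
The paper states Lemma \ref{lemzeta} without proof, so there is no paper argument to compare against; your proposal supplies the missing computation, and the route you take (reduce to a regular first-order ODE for $\xi=\tfrac23(\mp\tilde\zeta)^{3/2}$, integrate, verify the explicit primitives, Taylor-expand near $\rho=1$) is the natural and correct one. All the algebra checks out: the $\sec\theta$ substitution for $\rho>1$, the verification $F'(\rho)=-\sqrt{1-\rho^2}/\rho$ with $F(1)=0$ for $\rho<1$, the expansion $\sqrt{w^2-1}/w=\sqrt{2(w-1)}\bigl(1+O(w-1)\bigr)$ giving $-\tilde\zeta=2^{1/3}(\rho-1)+O((\rho-1)^2)$, and the observation that the $\rho<1$ branch produces the same $C^1$ extension at $\rho=1$.

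One small inaccuracy: you justify bypassing Picard by asserting that $\partial_\rho\tilde\zeta$ blows up at the turning point $\rho=1$. It does not; from your own expansion, $\partial_\rho\tilde\zeta(1)=-2^{1/3}$ is finite. The genuine obstruction to the standard uniqueness theory is different: writing the ODE implicitly as $G(\rho,\tilde\zeta,p):=-\tilde\zeta p^2+1/\rho^2-1=0$ with $p=\partial_\rho\tilde\zeta$, at the initial point $(\rho,\tilde\zeta)=(1,0)$ one has $\partial_p G=-2\tilde\zeta p=0$, so the equation degenerates to $0\cdot p^2=0$ and fails to determine $p(1)$; the implicit function theorem and hence Picard--Lindel\"of are inapplicable for that reason. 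Your reformulation in terms of $\xi$ is exactly the right remedy and does establish uniqueness; just correct the stated reason. A further, harmless, clarification worth adding: the sign choice $\xi'=\pm\sqrt{|\rho^2-1|}/\rho$ is not fixed by continuity alone but by the normalization (implicit in Proposition \ref{MT}) that $\tilde\zeta$ should be decreasing through $\rho=1$ so that $\partial_x\zeta|_{x=0}<0$ at a diffractive point; you gesture at this (``matches the glancing geometry'') but it is worth being explicit since the opposite sign also yields a genuine solution of the scalar ODE.
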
 
\begin{cor}\label{corMtau} Let $\tilde\psi_0(\beta)\in C^{\infty}[\frac{1}{81},2]$ be a smooth function supported near $1$ and such that $\tilde\psi_0=1$ on the support of $\psi_0$.
Consider the operator $M_{\tau}: \mathcal{E}'(\mathbb{R}^2)\rightarrow  \mathcal{D}(\mathbb{R}^3)$, where $\mathcal{E}'(\mathbb{R}^2)$ is the dual space of $C^{\infty}(\mathbb{R}^2)$, 
\begin{equation*}
M_{\tau}(f)(x,y,z):=\left(\frac{\tau}{2\pi}\right)^{2}\int G_{\tau}(x,y,z,\alpha,\gamma)\tilde\psi_0(1-\gamma^2)\hat{f}(\tau\alpha,\tau\gamma)d\alpha d\gamma.
\end{equation*}
Near the glancing region $(\tau^2+\Delta)M_{\tau}(f)\in O(\tau^{-\infty})
$ (up to the boundary) for all $f\in \mathcal{E}'(\mathbb{R}^2)$.
Moreover, the restriction to the boundary $M_{\tau}(f)|_{\partial\Omega}=:J_{\tau}(f)$ defined by
\begin{equation*}
J_{\tau}(f)(y,z)=\left(\frac{\tau}{2\pi}\right)^2\int e^{\ii\tau(\iota(y,z,\alpha,\gamma)-\tilde{y}\alpha-\tilde{z}\gamma)}a(0,y,z,\alpha,\gamma,\tau)\tilde\psi_0(1-\gamma^2)f(\tilde{y},\tilde{z})d\alpha d\gamma d\tilde{y} d\tilde{z},
\end{equation*}
has a microlocal inverse $J^{-1}_{\tau}$ as $a(x,y,z,\alpha,\gamma,\tau)$ is the elliptic symbol of Proposition \ref{MT}.
\end{cor}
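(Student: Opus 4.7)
Both assertions follow from Proposition \ref{MT} together with an explicit evaluation at $x=0$. For the first claim, I would observe that $\tau^2 + \Delta$ acts only on the variables $(x,y,z)$ and therefore commutes with the $(\alpha,\gamma)$-integration defining $M_\tau$. Pulling it inside the integral and invoking the identity provided by Proposition \ref{MT} for $(\tau^2+\Delta)G_\tau$ gives
\begin{equation*}
(\tau^2+\Delta) M_\tau(f)(x,y,z) = \left(\frac{\tau}{2\pi}\right)^2 \int e^{\ii\tau\iota}\frac{a_\infty A_+(\tau^{2/3}\zeta) + b_\infty \tau^{-1/3} A'_+(\tau^{2/3}\zeta)}{A_+(\tau^{2/3}\zeta_0)} \tilde\psi_0(1-\gamma^2)\hat{f}(\tau\alpha,\tau\gamma)\, d\alpha d\gamma.
\end{equation*}
On the support of $\tilde\psi_0$ and in a conic neighborhood of a glancing direction of exactly second order contact, the phase functions $\zeta$ and $\zeta_0$ remain bounded, so the Airy ratio is uniformly controlled in $\tau$; the claimed $O(\tau^{-\infty})$ bound then follows from $a_\infty, b_\infty \in O(\tau^{-\infty})$.

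Next, for the explicit boundary formula I would restrict $G_\tau$ to $x=0$. Since $\zeta|_{x=0} = \zeta_0$ by the very definition of $\zeta_0$, the Airy factor $A_+(\tau^{2/3}\zeta) A_+^{-1}(\tau^{2/3}\zeta_0)$ collapses to $1$, and the second term in $G_\tau$ drops out because $b|_{x=0} = 0$ by Proposition \ref{MT}. Hence $G_\tau(0,y,z,\alpha,\gamma) = e^{\ii\tau\iota(y,z,\alpha,\gamma)}\, a(0,y,z,\alpha,\gamma,\tau)$. Inserting this into the definition of $M_\tau$, expanding $\hat{f}(\tau\alpha,\tau\gamma) = \int e^{-\ii\tau(\tilde{y}\alpha+\tilde{z}\gamma)}f(\tilde{y},\tilde{z})\,d\tilde{y}d\tilde{z}$, and combining the exponentials yields precisely the advertised expression for $J_\tau(f)$.

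Finally, using the explicit solution $\iota(y,z,\alpha,\gamma) = y\alpha + z\gamma$ of the eikonal system \eqref{MT-para}, the phase of $J_\tau$ reduces to the pseudodifferential form $(y-\tilde{y})\alpha + (z-\tilde{z})\gamma$. Consequently $J_\tau$ is a semiclassical pseudodifferential operator on $\mathbb{R}^2$ with semiclassical parameter $1/\tau$ and full symbol $a(0,y,z,\alpha,\gamma,\tau)\tilde\psi_0(1-\gamma^2)$; since Proposition \ref{MT} guarantees that $a|_{x=0}$ is elliptic at the glancing point, the standard parametrix construction in the semiclassical symbolic calculus produces the desired microlocal inverse $J_\tau^{-1}$. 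The main obstacle common to all three steps is to ensure that the rapid decay of $a_\infty,b_\infty$ is not spoiled by the amplifying factor $A_+^{-1}(\tau^{2/3}\zeta_0)$ (absent only exactly at $x=0$); this is ruled out by the essential-support condition on $a$ in Proposition \ref{MT} together with the cutoff $\tilde\psi_0$, which jointly confine everything to a region where $A_+(\tau^{2/3}\zeta_0)$ is bounded away from zero.
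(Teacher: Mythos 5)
Your proof follows the natural route and fills in a computation the paper omits (the Corollary is stated without an explicit proof; the paper just points to the analogue in \cite{SmSo94Duke}). The structure is sound: commute $\tau^2+\Delta$ through the $(\alpha,\gamma)$-integral, apply the identity for $(\tau^2+\Delta)G_\tau$ from Proposition~\ref{MT}, then for the boundary formula use $\zeta|_{x=0}=\zeta_0$, $b|_{x=0}=0$ and $\iota=y\alpha+z\gamma$ to collapse $G_\tau|_{x=0}$ to $e^{\ii\tau\iota}a|_{x=0}$, so that $J_\tau$ becomes a semiclassical pseudodifferential operator with elliptic symbol $a|_{x=0}$.

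There is one imprecision in the last paragraph that is worth correcting. You claim the essential support of $a$ and the cutoff $\tilde\psi_0$ ``jointly confine everything to a region where $A_+(\tau^{2/3}\zeta_0)$ is bounded away from zero.'' That is not quite right: the essential support of $a$ is a \emph{fixed} conic neighborhood of the glancing set, so $\zeta_0$ ranges over a small $\tau$-independent interval around $0$, and hence the argument $\tau^{2/3}\zeta_0$ of $A_+$ is \emph{not} bounded uniformly in $\tau$ — it can tend to $-\infty$. Since $|A_+(w)|\sim|w|^{-1/4}$ as $w\to-\infty$ (see Lemma~\ref{lem:Phi+}), the denominator does go to zero at a polynomial rate in $\tau$. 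The correct statement is the weaker (but sufficient) one: $A_+$ never vanishes on $\mathbb R$, $|A_+(w)|^{-1}\lesssim(1+|w|)^{1/4}$ for $w\leq 0$, and $|A_+(w)|^{-1}$ decays exponentially for $w\geq0$. Thus $A_+^{-1}(\tau^{2/3}\zeta_0)=O(\tau^{1/6})$ on the support of the integrand, which is only polynomial growth and hence cannot spoil the $O(\tau^{-\infty})$ coming from $a_\infty,b_\infty$. A parallel observation handles the factor $\tau^{-1/3}A_+'(\tau^{2/3}\zeta)/A_+(\tau^{2/3}\zeta_0)$, where $|A_+'(w)|\sim|w|^{1/4}$ again gives at most polynomial growth. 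With this one fix the argument is complete.
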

We define the following operator $T_\tau: \mathcal{E}'(\mathbb{R}^2)\rightarrow  \mathcal{D}(\mathbb{R}^3)$ for $F\in \mathcal{E}'(\mathbb{R}^2)$
\begin{equation}\label{T-tau}
T_{\tau}(F)(x,y,z)=\left(\frac{\tau}{2\pi}\right)^2\tau^{1/3}\int{e^{\ii\tau(y\alpha+z\gamma+\frac{\sigma^3}{3}+\sigma\zeta(x,\alpha,\gamma))}}(a+ b\frac{\sigma}{\ii})\tilde\psi_0(1-\gamma^2)\hat{F}(\tau\alpha,\tau\gamma)d\alpha d\gamma.
\end{equation}
According to \cite[Lemma A.2]{SmSo94Duke}, $T_{\tau}$ is an elliptic FIO near a glancing point and $(\tau^2+\Delta)T_{\tau}(F) \in O_{C^{\infty}}(\tau^{-\infty})$. 
\begin{lemma}\label{lemmaF} 
Let $Q_0=(s,0,0)$ with $s\geq\sqrt{2}$, $y_*=\arcsin(1/s)$ and assume
  $\tau>1$ is large enough. Then there exists an unique function $F_{\tau}$ satisfying $w_{0,gl}(x,y,z,\tau)=T_{\tau}(F_{\tau})(x,y,z)$ for $(x,y)$ in a neighborhood of $(0,y_*)$. Moreover, $F_{\tau}$ is explicit and has the following form
\begin{equation}\label{formF}
\hat{F}(\tau\alpha,\tau\gamma)=\tau^{\frac16} e^{-\ii\tau\sqrt{1-\gamma^2}\Gamma_0(\frac{\alpha}{\sqrt{1-\gamma^2}},s)}f(\alpha,\gamma,\tau) \frac{\chi_{\varepsilon}(\frac{\alpha}{\sqrt{1-\gamma^2}})\psi_0(1-\gamma^2)}{(1-\gamma^2)^{5/12}(s^2-1)^{1/4}},
\end{equation} 
where $f(\alpha,\gamma,\tau)$ is an elliptic symbol of order $0$ 
$\psi_0(1-\tau^2)$ is the smooth cutoff from \eqref{dew1}
and $\chi_{\varepsilon}$ is the smooth cut-off introduced to define $w_{0,gl}$. For $|\tilde\alpha-1|\leq 2\varepsilon$, $\Gamma_0(\tilde\alpha,s)=y_*\tilde\alpha+\sqrt{s^2-1}+\frac{(1-\tilde\alpha)^2}{2\sqrt{s^2-1}}(1+O(1-\tilde\alpha))$. 
\end{lemma}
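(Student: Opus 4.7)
The plan is to determine $\hat F_\tau$ by performing stationary phase on both sides in the hyperbolic region $\zeta(x,\alpha,\gamma)<0$ (where the Melrose-Taylor parametrix is oscillatory) and then matching the resulting phases and amplitudes. On the essential support of $\chi_\varepsilon(\tilde\alpha)\psi_0(1-\gamma^2)$ with $\tilde\alpha:=\alpha/\sqrt{1-\gamma^2}$ slightly less than $1$ and $x\geq 0$, this is exactly the regime in which the analysis applies.

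For $T_\tau(F_\tau)$, stationary phase in $\sigma$ at $\sigma=\sqrt{-\zeta}$ (or equivalently Airy asymptotics for $A_+(\tau^{2/3}\zeta)$) gives, on the branch corresponding to the outgoing ray from $Q_0$, a leading contribution of size $\tau^{-1/6}(-\zeta)^{-1/4}$ with phase $\tfrac{2}{3}\tau(-\zeta)^{3/2}$; combined with $\iota=y\alpha+z\gamma$ and the prefactor $\tau^{1/3}$ in \eqref{T-tau} one obtains
\begin{equation*}
T_\tau(F_\tau)(x,y,z)\sim\frac{\tau^2}{(2\pi)^2}\tau^{1/6}\int e^{\ii\tau\left(y\alpha+z\gamma+\frac{2}{3}(-\zeta)^{3/2}\right)}\mathfrak{a}(x,\alpha,\gamma,\tau)(-\zeta)^{-1/4}\tilde\psi_0(1-\gamma^2)\hat F_\tau(\tau\alpha,\tau\gamma)\,d\alpha\,d\gamma
\end{equation*}
with $\mathfrak{a}$ elliptic. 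For $w_{0,gl}$, applying stationary phase in $(\tilde y,\tilde z)$ to \eqref{dew1} (with the added cutoff $\chi_\varepsilon$), the critical-point equations $\partial_{\tilde z}\phi=-\gamma$ and $\partial_{\tilde y}\phi=-\alpha$ can be solved algebraically via the identity $(1-\gamma^2)^2\phi^2=(p+q)^2$ with
\begin{equation*}
p:=\sqrt{(1-\gamma^2)s^2-\alpha^2},\qquad q:=\sqrt{(1-\gamma^2)(1+x)^2-\alpha^2},
\end{equation*}
yielding $\phi^*=(p+q)/(1-\gamma^2)$, $\tilde z^*=-\gamma\phi^*$ and $\sin\tilde y^*=(\alpha^2-pq)/(s(1+x)(1-\gamma^2))$. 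The Hessian is nondegenerate on the support of $\chi_\varepsilon\psi_0$, and stationary phase produces an oscillatory integral in $(\alpha,\gamma)$ with phase $\tau(y\alpha+z\gamma-\tilde y^*\alpha-(p+q))$ and an elliptic symbol of order $0$.

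Equating the two phases forces the definition
\begin{equation*}
\sqrt{1-\gamma^2}\,\Gamma_0(\tilde\alpha,s):=\tilde y^*\alpha+(p+q)+\tfrac{2}{3}(-\zeta(0,\alpha,\gamma))^{3/2},
\end{equation*}
which, after scaling out $\sqrt{1-\gamma^2}$, depends only on $(\tilde\alpha,s)$. At $\tilde\alpha=1$ one has $q|_{x=0}=0$, $p|_{x=0}=\sqrt{1-\gamma^2}\sqrt{s^2-1}$, $\tilde y^*=y_*$ and $\zeta=0$, giving $\Gamma_0(1,s)=y_*+\sqrt{s^2-1}$, and Taylor expansion about $\tilde\alpha=1$ then produces the stated form. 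Matching amplitudes (the Hessian of the $(\tilde y,\tilde z)$-phase combined with the $(-\zeta)^{-1/4}$ factor at $x=0$ generates the prefactor $(1-\gamma^2)^{-5/12}(s^2-1)^{-1/4}$) and the scaling $\tau^{1/6}$ reproduces the claimed form of $\hat F_\tau$, with $f(\alpha,\gamma,\tau)$ an elliptic symbol of order $0$; uniqueness of $F_\tau$ follows from the microlocal ellipticity of $T_\tau$ at the glancing direction (Corollary \ref{corMtau}).

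The main obstacle is the verification of the Taylor expansion of $\Gamma_0$ at $\tilde\alpha=1$: the quantities $\tilde y^*(\tilde\alpha,s)$ and $p+q|_{x=0}$ individually develop $\sqrt{1-\tilde\alpha}$ singularities as $\tilde\alpha\to 1$ (a geometric reflection of the tangency of the corresponding ray from $Q_0$ to $\partial\Omega$), and the Airy phase $(2/3)(-\zeta)^{3/2}|_{x=0}=\sqrt{1-\gamma^2}(\sqrt{1-\tilde\alpha^2}-\tilde\alpha\arccos\tilde\alpha)$ (from Lemma \ref{lemzeta}) carries such a singularity too; only the delicate cancellation of these three square-root singularities renders $\Gamma_0$ smooth across $\tilde\alpha=1$, eliminates any half-integer powers, and produces both the linear coefficient $y_*$ and the quadratic coefficient $1/(2\sqrt{s^2-1})$ claimed in the lemma.
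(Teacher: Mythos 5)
Your strategy---applying stationary phase to both $T_{\tau}(F_{\tau})$ (via the oscillatory asymptotics of $A_+$) and to $w_{0,gl}$, and then matching phases and amplitudes in the nondegenerate region $\zeta<0$---is a genuinely different route from the paper's. The paper, following \cite{IL} (and making the construction explicit in Lemma~\ref{lemgam0} and the paragraph after Corollary~\ref{corMtau}), applies stationary phase only in $\tilde z$ and then uses the Chester--Friedman--Ursell normal form together with the Malgrange preparation theorem to put the $\tilde y$-integral directly into Airy form $\sigma^3/3+\sigma\zeta+\Gamma_0$, uniformly across the caustic $\zeta=0$. The CFU route gives smoothness of $\Gamma_0$ automatically and yields $\Gamma_0=\tfrac12(\varphi(0,y_+)+\varphi(0,y_-))$, a symmetric combination in which the odd powers of $\sqrt{1-\tilde\alpha^2}$ cancel by inspection; your single-branch matching yields $\Gamma_0=\tilde y^*\tilde\alpha+\phi(x,y_-,0,s)-\tfrac23\tilde\alpha(-\tilde\zeta((1+x)/\tilde\alpha))^{3/2}$, which is the same function but now the cancellation of the square-root singularities must be proved by hand.

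There are two concrete gaps. First, the ``main obstacle'' you flag---verifying that the three $\sqrt{1-\tilde\alpha}$ singularities cancel, that no half-integer powers survive, and that the linear and quadratic Taylor coefficients are $y_*$ and $1/(2\sqrt{s^2-1})$---is exactly the content of the lemma's last claim and is left unproved. A proposal that names the essential computation as an obstacle but does not perform it is incomplete; the paper's proof of Lemma~\ref{lemgam0} carries it out explicitly via $\partial_{\tilde\alpha}\Gamma_0=\tfrac12(y_++y_-)$ and the expansion of $y_\pm$ from \eqref{ypm}. Second, your matching equation is written with a wrong sign and at the wrong point: $A_+$ has phase $-\mu(w)\sim -\tfrac23(-w)^{3/2}$ for $w<0$ (Lemma~\ref{lem:Phi+}), so the Airy contribution enters with $-\tfrac23(-\zeta)^{3/2}$, and $\zeta$ must be evaluated at the running $x$, not at $x=0$; the $x$-independence of the resulting $\Gamma_0$ is then a nontrivial eikonal identity (proved in Lemma~\ref{lemgam0} by $\partial_x\Gamma=0$) that your proposal silently assumes. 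Likewise, the $(1-\gamma^2)^{-5/12}(s^2-1)^{-1/4}$ amplitude factor is asserted but not derived from the Hessian and $(-\zeta)^{-1/4}$ data. None of these are fatal to the approach, but as written the proposal does not yet constitute a proof of the lemma.
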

The proof of Lemma \ref{lemmaF} follows exactly as in \cite{IL} as $\sqrt{1-\gamma^2}\geq 1/8$ on the support of $\psi_0$. Our goal is to describe, microlocally near the glancing regime, $u^+_{j,gl,h}:=\chi(hD_t)u^+_{j,gl}(\cdot,t)$ for all $0\leq j\leq j(s,h)$. For $j=0$ :
\begin{prop}\label{struttura}
For $Q=(x,y,z)$ near the glancing region we have 
\begin{equation*}
u^+_{0,gl}(Q,Q_0,t)=\frac{1}{(2\pi)^2}\int e^{\ii t\tau}\big(w_{0,gl}(x,y,z,\tau)-M_{\tau}(J_{\tau}^{-1}(w_{0,gl}|_{\partial\Omega})(x,y,z))\big)d\tau,
\end{equation*}
where, for $F_{\tau}$ provided by Lemma \ref{lemmaF} satisfying $w_{0,gl}(\cdot,\tau)=T_{\tau}(F_{\tau})$,  $M_{\tau}(J_{\tau}^{-1}(w_{0,gl}|_{\partial\Omega}))$ reads as
\begin{equation}\label{MtauJ-1tau}
\left(\frac{\tau}{2\pi}\right)^2\int e^{\ii\tau(y\alpha+z\gamma)}\left(aA_{+}(\tau^{2/3}\zeta)+b\tau^{-1/3}A'_{+}(\tau^{2/3}\zeta)\right) \frac{A(\tau^{2/3}\zeta_0)}{A_{+}(\tau^{2/3}\zeta_0)}\tilde\psi_0(1-\gamma^2)\hat{F}_{\tau}(\tau\alpha,\tau\gamma)d\alpha d\gamma.
\end{equation}
\end{prop}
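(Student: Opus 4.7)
The strategy is to construct the Dirichlet solution by subtracting from $w_{0,gl}$ an outgoing correction that cancels its boundary trace. Since $w_{0,gl}$ is a piece of the free wave one has $(\tau^2+\Delta)w_{0,gl}=0$ in $\mathbb{R}^3$; consequently one expects $u^+_{0,gl}(Q,Q_0,t)=\frac{1}{(2\pi)^2}\int e^{\ii t\tau}(w_{0,gl}-v_\tau)\,d\tau$, where $v_\tau$ must satisfy $(\tau^2+\Delta)v_\tau=O(\tau^{-\infty})$ in $\Omega$ and match the trace $w_{0,gl}|_{\partial\Omega}$ modulo smoothing, microlocally near the glancing set.

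By Corollary \ref{corMtau}, for any boundary data $f$ the Melrose--Taylor construction $M_\tau(f)$ is an outgoing parametrix with $(\tau^2+\Delta)M_\tau(f)=O(\tau^{-\infty})$; its boundary restriction $J_\tau$ is elliptic on the glancing region, hence admits a microlocal inverse $J_\tau^{-1}$. Taking $f=J_\tau^{-1}(w_{0,gl}|_{\partial\Omega})$ provides the required $v_\tau=M_\tau(J_\tau^{-1}(w_{0,gl}|_{\partial\Omega}))$; the difference $w_{0,gl}-v_\tau$ then solves the wave equation with Dirichlet trace vanishing up to $O(\tau^{-\infty})$, and integrating against $e^{\ii t\tau}$ recovers $u^+_{0,gl}$ modulo smoothing terms that contribute only $O(h^\infty)$ to any subsequent dispersive estimate.

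To turn this abstract formula into the explicit expression \eqref{MtauJ-1tau}, I would use Lemma \ref{lemmaF} to write $w_{0,gl}=T_\tau(F_\tau)$ with $F_\tau$ as in \eqref{formF}. On $\{x=0\}$ the $\sigma$-phase in \eqref{T-tau} becomes $\tau(\sigma^3/3+\sigma\zeta_0)$; since $b|_{x=0}=0$ by Proposition \ref{MT}, integrating out $\sigma$ yields $2\pi\tau^{-1/3}\,a|_{x=0}\,A(\tau^{2/3}\zeta_0)$. Comparing this with the defining expression for $J_\tau$, whose amplitude is precisely the same elliptic symbol $a|_{x=0}$, the microlocal inversion $J_\tau^{-1}$ cancels that factor; feeding the result into $M_\tau$ substitutes $\hat F_\tau(\tau\alpha,\tau\gamma)\,A(\tau^{2/3}\zeta_0)/A_+(\tau^{2/3}\zeta_0)$ in place of the density that $M_\tau$ would act on, producing exactly \eqref{MtauJ-1tau}.

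The hardest step will be verifying that $J_\tau$ really is microlocally invertible with symbol bounds uniform in the large parameter $\tau$, and that the composition $M_\tau\circ J_\tau^{-1}$ produces only genuinely $O(\tau^{-\infty})$ interior remainders on the essential support of the glancing cutoff. The cancellation $b|_{x=0}=0$ and the factorization of the boundary trace of $T_\tau(F_\tau)$ through the same elliptic symbol $a|_{x=0}$ that defines $J_\tau$ are the crucial algebraic features that make the Airy ratio $A(\tau^{2/3}\zeta_0)/A_+(\tau^{2/3}\zeta_0)$ emerge cleanly and pave the way for later stationary-phase analysis in $(\alpha,\gamma)$ via Airy asymptotics.
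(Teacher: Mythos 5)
Your proposal is correct and reproduces the argument the paper leaves implicit. Indeed, the paper does not supply an explicit proof of Proposition \ref{struttura}: it relies on Corollary \ref{corMtau} (the Melrose--Taylor parametrix $M_\tau$ solves $(\tau^2+\Delta)\cdot = O(\tau^{-\infty})$ up to the boundary and its trace $J_\tau$ is microlocally invertible), on Lemma \ref{lemmaF} ($w_{0,gl}=T_\tau(F_\tau)$), and on the standard outgoing-uniqueness argument from \cite{IL}. Your sketch identifies all three ingredients and, in the algebraic step, correctly uses $b|_{x=0}=0$ together with the definition $A(w)=\tfrac{1}{2\pi}\int e^{\ii(s^3/3+sw)}ds$ to show that the boundary trace of $T_\tau(F_\tau)$ factors as $J_\tau$ applied to a function with Fourier density $A(\tau^{2/3}\zeta_0)\hat F_\tau$, so that $J_\tau^{-1}$ strips the elliptic factor $a|_{x=0}$ and $M_\tau$ installs the Airy ratio $A/A_+$. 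The only caveats are (i) the normalization constants (factors of $2\pi$) between your computation and \eqref{MtauJ-1tau} are not reconciled --- the paper's own conventions around $T_\tau$ and the $\frac{1}{(2\pi)^2}$ prefactor are themselves internally inconsistent, so this is not a gap in your reasoning --- and (ii) the statement that ``$J_\tau^{-1}$ cancels that factor'' should more precisely be read as a microlocal inversion of the pseudodifferential operator with elliptic principal symbol $a|_{x=0}$, producing an asymptotic expansion rather than an exact cancellation; this is compatible with the symbol expansions already built into $M_\tau$, $T_\tau$ and \eqref{MtauJ-1tau}.
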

\begin{cor}
For $P=(0,y,z)\in\partial\Omega$ near $\mathcal{C}_{Q_0}$ we have
\begin{equation}\label{u_xomega2}
\begin{aligned}
\mathcal{F}(\partial_x u^+_{0,gl})(P,Q_0,\tau)&=\left(\frac{\tau}{2\pi}\right)^{2}
\tau^{2/3+1/6} \int e^{i\tau (y\alpha+z\gamma-\sqrt{1-\gamma^2}\Gamma_0(\frac{\alpha}{\sqrt{1-\gamma^2}},s))}\\
&\times  b_{\partial}f 
\frac{\chi_{\varepsilon}(\frac{\alpha}{\sqrt{1-\gamma^2}})\psi_0(1-\gamma^2)}{(s^2-1)^{1/4}}\frac{{(1-\gamma^2)^{-5/12+1/3}}}{A_+(\tau^{2/3}\zeta_0(\alpha,\gamma))}d\alpha d\gamma,
\end{aligned}
\end{equation}
where $\zeta_0(\alpha,\gamma)=\alpha^{2/3}\tilde\zeta(\sqrt{1-\gamma^2}/\alpha)$ with $\tilde\zeta$ defined in Lemma \ref{lemzeta}. For $\tilde \alpha$ near $1$, $ b_{\partial}(y,z,\tilde\alpha,\tau)$ is elliptic of order $0$ in $\tau$ with main contribution $a_0\tilde\alpha^{-1/3}(\partial_{\rho}\tilde\zeta)(\rho)|_{\rho=\frac{1}{\tilde\alpha}}$, $a_0=a|_{x=0}$.
\end{cor}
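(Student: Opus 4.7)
The plan is to compute $\mathcal{F}(\partial_x u^+_{0,gl})(P,Q_0,\tau)$ at $P=(0,y,z)\in\partial\Omega$ by differentiating the representation of $u^+_{0,gl}$ supplied by Proposition \ref{struttura} and then inserting the explicit $\hat F_\tau$ from Lemma \ref{lemmaF}. Since $M_\tau(J_\tau^{-1}(w_{0,gl}|_{\partial\Omega}))|_{x=0}=w_{0,gl}|_{x=0}$ by construction of $J_\tau^{-1}$, everything reduces to evaluating $\partial_x\bigl(w_{0,gl}-M_\tau(J_\tau^{-1}(w_{0,gl}|_{\partial\Omega}))\bigr)\big|_{x=0}$. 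Using $w_{0,gl}=T_\tau(F_\tau)$ from Lemma \ref{lemmaF} and performing the $\sigma$-integration in \eqref{T-tau} via the Airy integral representation, $T_\tau(F_\tau)$ takes a form analogous to \eqref{MtauJ-1tau} but with the real Airy function $A(\tau^{2/3}\zeta)$ in place of $A_+(\tau^{2/3}\zeta)/A_+(\tau^{2/3}\zeta_0)$; this identifies $J_\tau^{-1}(w_{0,gl}|_{\partial\Omega})$ with a suitable microlocal multiple of $F_\tau$ and ensures matching on the boundary, where $\zeta=\zeta_0$ and $b=0$.

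Next, differentiating in $x$ at $x=0$ and using $b|_{x=0}=0$ together with the Airy equation $A''(\xi)=\xi A(\xi)$, the contributions proportional to $\partial_x a|_{x=0}$ cancel between the two pieces and the leading term reduces to a multiple of
\[
a_0\,\tau^{2/3}\partial_x\zeta\big|_{x=0}\left[A'(\tau^{2/3}\zeta_0)-A(\tau^{2/3}\zeta_0)\frac{A'_+(\tau^{2/3}\zeta_0)}{A_+(\tau^{2/3}\zeta_0)}\right] = a_0\,\tau^{2/3}\partial_x\zeta\big|_{x=0}\,\frac{W(A_+,A)}{A_+(\tau^{2/3}\zeta_0)},
\]
where $W(A_+,A)=A_+A'-A'_+A$ is the (constant, non-zero) Wronskian of two linearly independent Airy solutions. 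This accounts for the factor $1/A_+(\tau^{2/3}\zeta_0)$ in \eqref{u_xomega2}. Terms involving $\partial_x b|_{x=0}$ are of order $\tau^{-1}$ relative to the leading one and contribute only to the subleading part of $b_\partial$.

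It then remains to unravel the amplitude. From $\zeta(x,\alpha,\gamma)=\alpha^{2/3}\tilde\zeta((1+x)\sqrt{1-\gamma^2}/\alpha)$ the chain rule gives
\[
\partial_x\zeta\big|_{x=0}=\tilde\alpha^{-1/3}(1-\gamma^2)^{1/3}(\partial_\rho\tilde\zeta)(\rho)\big|_{\rho=1/\tilde\alpha},\qquad \tilde\alpha=\alpha/\sqrt{1-\gamma^2},
\]
whose product with $a_0$ yields the claimed main contribution to $b_\partial$. Substituting $\hat F_\tau$ from \eqref{formF} inserts the phase $-\sqrt{1-\gamma^2}\,\Gamma_0(\tilde\alpha,s)$, the prefactor $\tau^{1/6}$, the elliptic symbol $f$, and the weight $(1-\gamma^2)^{-5/12}(s^2-1)^{-1/4}\chi_\varepsilon\psi_0$. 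Collecting $\tau$-powers gives $(\tau/2\pi)^2\cdot\tau^{2/3}\cdot\tau^{1/6}$, while combining the $(1-\gamma^2)^{1/3}$ from $\partial_x\zeta|_{x=0}$ with the $(1-\gamma^2)^{-5/12}$ from $\hat F_\tau$ produces the stated $(1-\gamma^2)^{-5/12+1/3}$ weight. The cutoffs and the $(s^2-1)^{-1/4}$ factor transfer unchanged, yielding \eqref{u_xomega2}.

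The principal obstacle is the algebraic verification that the Wronskian factor $1/A_+(\tau^{2/3}\zeta_0)$ emerges cleanly from differentiating at the boundary: this hinges on the $\partial_x a$ cancellation together with $b|_{x=0}=0$, and on properly matching the normalization of $F_\tau$ from Lemma \ref{lemmaF} with the microlocal inverse $J_\tau^{-1}$ appearing in Proposition \ref{struttura}, so that the subleading corrections (coming from $\partial_x b$ and the full symbol expansion of $a$ and $b$) are indeed absorbed into the lower-order part of $b_\partial$.
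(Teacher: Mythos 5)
Your argument is correct and follows the paper's proof essentially line for line: differentiate the two pieces of Proposition \ref{struttura} at $x=0$, use $b|_{x=0}=0$ and the boundary matching to kill the $\partial_x a$ and $A''$ contributions, collapse the remaining bracket to a Wronskian divided by $A_+(\tau^{2/3}\zeta_0)$, and substitute $\hat F_\tau$ from Lemma \ref{lemmaF} while tracking the $\tau$- and $(1-\gamma^2)$-exponents. The only slip is cosmetic: you describe $T_\tau(F_\tau)$ as replacing $A_+(\tau^{2/3}\zeta)/A_+(\tau^{2/3}\zeta_0)$ by $A(\tau^{2/3}\zeta)$, whereas the object being replaced in \eqref{MtauJ-1tau} is $A_+(\tau^{2/3}\zeta)\,A(\tau^{2/3}\zeta_0)/A_+(\tau^{2/3}\zeta_0)$; the rest of your computation uses the correct expression, so this is a typo rather than a gap.
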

\begin{proof}
Using \eqref{MtauJ-1tau}, we can compute the normal derivatives of each of the two contributions of $u^+_{0,gl}$ and then take the difference. As such, for $P=(0,y,z)$ near the glancing region, we obtain the following
\begin{equation}\label{u_xomega}
\mathcal{F}(\partial_x u^+_{0,gl})(P,Q_0,\tau)=\left(\frac{\tau}{2\pi}\right)^{2}\int e^{\ii\tau(y\alpha+z\gamma)}\tau^{2/3}(1-\gamma^2)^{1/3}b_{\del}
\left(A'-A_{+}'\frac{A}{A_{+}}\right)(\tau^{2/3}\zeta_0)\hat{F}_{\tau}(\tau\alpha,\tau\gamma)d\alpha d\gamma,
\end{equation}
where $(1-\gamma^2)^{1/3}b_{\partial}(y,z,\alpha,\gamma,\tau)=a(0,y,z,\alpha,\gamma,\tau)(\partial_x\zeta)(0,\alpha,\gamma)+\tau^{-1}\partial_xb(0,y,z,\alpha,\gamma,\tau)$. As $a_0:=a|_{x=0}$ is elliptic and as 
$\partial_x\zeta|_{x=0}=(1-\gamma^2)^{1/3}\frac{1}{\tilde\alpha^{1/3}}(\partial_{\rho}\tilde\zeta)(\rho)|_{\rho=\frac{1}{\tilde\alpha}}$
for $\tilde\alpha=\frac{\alpha}{\sqrt{1-\gamma^2}}$ on the support of $\chi_{\varepsilon}$ then $ b_{\partial}(y,z,\tilde\alpha,\tau):=\frac{a_0}{\tilde\alpha^{1/3}}(\partial_{\rho}\tilde\zeta)(\rho)|_{\rho=\frac{1}{\tilde\alpha}}$ is elliptic, close to $1$ on the support of the symbol.
Replacing $\hat{F}_{\tau}$ by \eqref{formF} and using the Wronskian relation
 $A'(z)A_{+}(z)-A'_{+}(z)A(z)=\ii e^{-\ii \pi/3}$ allows to conclude.
\end{proof}

In the remaining of this section we show that, if $j\leq j(s,h)$, similar integral formulas hold for each $w_{j,gl}$; moreover we explicitly compute the corresponding functions $F_{j,\tau}$ to determine $\mathcal{F}(\partial_x u^+_{j,gl})(P,Q_0,\tau)$, $P\in \partial\Omega$. For $j=0$ we may follow closely the approach in \cite[Section 3.1.1]{IL} (as the glancing order contact is exactly $2$) to provide a detailed proof. Let $1\leq j\leq j(s,h)$ : we use the explicit form of $w_{j,gl}$ and write it as an oscillatory integral involving the Airy function $A(\tau^{2/3}\zeta)$ and its derivative. After the changes of variables $\alpha=\sqrt{1-\gamma^2}\tilde\alpha$ and $\tilde z=\phi(x,\tilde y,0,s) z_1$ in \eqref{dew22j}, $w_{j,gl}(Q,Q_0,\tau)$ becomes
\[
\frac{\tau^2}{(2\pi)^2}\frac{i\tau}{4\pi}\int \frac{\psi_j(1-\gamma^2)}{\sqrt{1+z_1^2}}\chi_{\varepsilon}(\tilde\alpha)\sqrt{1-\gamma^2}e^{i\tau(z\gamma+\sqrt{1-\gamma^2}(y-\tilde y)\tilde\alpha -\phi(x,\tilde y,0,s)(z_1\gamma+\sqrt{1+z_1^2}))} d\tilde\alpha d\gamma d\tilde y d\tilde z_1.
\]
The critical point w.r.t. $z_1$ satisfies $\gamma+\frac{z_1}{\sqrt{1+z_1^2}}=0$ hence $1+z_1^2=\frac{1}{1-\gamma^2}$. Set $z_1=-\sqrt{\frac{w^2}{1-\gamma^2}-1}$, then the phase is stationary w.r.t. $w$ at $w=1$ and at this point, the second order derivative of the phase equals $\tau\phi(x,\tilde y,0,s)\sqrt{1-\gamma^2}\sim 2^{-j}s/h$. As $j\leq j(s,h)$, then $2^{-j}\geq 2^{-j(s,h)}\sim (s/h)^{-1/3}$, hence $2^{-j}s/h\gtrsim (s/h)^{2/3}$ with $s\geq \sqrt{2}$. For $w$ near $1$ the stationary phase applies and the critical value of the phase depending of $z_1$ becomes $-\phi(x,\tilde y,0,s)\sqrt{1-\gamma^2}$. 
For $w\notin [1/\sqrt{2},\sqrt{2}]$ we perform integrations by parts with large parameter $\sim 2^{-j}s/h>(s/h)^{2/3}$. We obtain, modulo $O((h/2^{-j}s)^{\infty})$ contributions,
\begin{equation}\label{wjglform}
w_{j,gl}(Q,Q_0,\tau)=C\tau^{2+1/2}\int \frac{\psi_j(1-\gamma^2)\chi_{\varepsilon}(\tilde\alpha)}{\phi^{1/2}(x,\tilde y,0,s)}(1-\gamma^2)^{\frac 12+\frac 12-\frac 12-\frac 14}e^{i\tau(z\gamma+\sqrt{1-\gamma^2}((y-\tilde y)\tilde\alpha-\phi(x,\tilde y,0,s)))} d\tilde\alpha d\gamma d\tilde y.
\end{equation}
For $\tilde\alpha$ near $1$, we can perform a suitable change of variable w.r.t. $\tilde y$ such that the phase $\tilde y\tilde\alpha+\phi(x,\tilde y,0,s)$ transforms into an Airy type phase function of the form $\sigma^3/3+\sigma\tilde\zeta(\frac{1+x}{\tilde\alpha})+\Gamma_0(\tilde\alpha,s)$, where $\tilde\zeta$ is the function defined in Lemma \ref{lemzeta}. 
Let
$\varphi(x,\tilde y,\tilde\alpha, s):=\tilde y\tilde\alpha+\phi(x,\tilde y,0,s)$.
As $\partial_{\tilde y}\phi(x,\tilde y,0,s)=-\frac{s(1+x)\cos \tilde y}{\phi}$, $\partial_{\tilde y}^2\phi(x,\tilde y,0,s)=\frac{s(1+x)\sin \tilde y-(\partial_{\tilde y}\phi)^2}{\phi}$, then $\partial_{\tilde y}^2\varphi(x,\tilde y,\tilde\alpha,s)=0$ when
$\tilde y= y_*(x):=\arcsin\left(\frac{1+x}{s}\right)$ and there $\partial_x\phi(x,\tilde y,0,s)_{|y_*(x)}=0$ and $\partial_{\tilde y}\phi(x,\tilde y,0,s)_{|y_*(x)}=-(1+x)$.  For $\tilde y$ near $y_*(x)$ there are two critical points $y_{\pm}=y_{\pm}(x,\tilde\alpha)$ satisfying
\begin{equation}\label{ycritpmsF}
s(1+x)\sin(y_\pm)=\tilde\alpha^2\pm\sqrt{s^2-\tilde\alpha^2}\sqrt{(1+x)^2-\tilde\alpha^2},  \phi(x,y_{\pm},0,s)=\sqrt{s^2-\tilde\alpha^2}\mp\sqrt{(1+x)^2-\tilde\alpha^2}.
\end{equation}
\begin{lemma}\label{lemgam0} Let $\tilde y=y_*(x)+\mathtt{Y}$. 
There exists a unique change of variables $\mathtt{Y}\mapsto \sigma$ which is smooth and satisfying $\frac{d\mathtt{Y}}{d{\sigma}}\notin\{0,\infty\}$ such that, for $\tilde \zeta$ given by Lemma \ref{lemzeta}, we have
\begin{equation}\label{fctssfin}
\varphi(x,y_*(x)+\mathtt{Y},\tilde\alpha,s)=\frac{{\sigma}^3}{3}+{\sigma}\tilde\alpha^{2/3}\tilde\zeta(\frac{1+x}{\tilde\alpha})+\Gamma_0(\tilde\alpha,s),
\end{equation} 
where $\Gamma_0(\tilde\alpha,s):=\sqrt{s^2-1}+\arcsin (\frac{1}{s})\tilde\alpha+\frac{(1-\tilde\alpha)^2}{2\sqrt{s^2-1}}(1+O(1-\tilde\alpha)\big)$ for $\tilde \alpha$ near $1$.
\end{lemma}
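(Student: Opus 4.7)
The plan is to realize the statement as the classical Chester--Friedman--Ursell reduction to Airy normal form for a phase with two coalescing real critical points (alternatively an instance of Malgrange's preparation theorem). At the glancing direction $\tilde\alpha=1+x$ the function $\tilde y\mapsto\varphi(x,\tilde y,\tilde\alpha,s)$ has a degenerate critical point at $y_*(x)$; I will first check the non-degeneracy $\partial_{\tilde y}^3\varphi|_{\tilde y=y_*(x),\,\tilde\alpha=1+x}\neq 0$ (a direct computation exploiting $\partial_{\tilde y}\phi|_{y_*(x)}=-(1+x)$ and $\partial_{\tilde y}^2\phi|_{y_*(x),\,\tilde\alpha=1+x}=0$ yields the value $1+x$). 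Together with the existence, for $\tilde\alpha<1+x$, of exactly two nearby real critical points $y_\pm$ given by \eqref{ycritpmsF}, this is the standard fold hypothesis, and it guarantees the unique existence of a smooth diffeomorphism $\mathtt Y\leftrightarrow\sigma$ with $d\mathtt Y/d\sigma\notin\{0,\infty\}$, and smooth functions $Z(x,\tilde\alpha,s),\Gamma_0(x,\tilde\alpha,s)$, such that $\varphi(x,y_*(x)+\mathtt Y,\tilde\alpha,s)=\sigma^3/3+\sigma Z+\Gamma_0$.

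To identify $Z$ and $\Gamma_0$ I will match critical values. Since the cubic $\sigma^3/3+\sigma Z$ has critical values $\mp\tfrac23(-Z)^{3/2}$ at $\sigma=\pm\sqrt{-Z}$ (for $Z<0$), the orientation is fixed by
\[
\Gamma_0=\tfrac12[\varphi(y_+)+\varphi(y_-)],\qquad \tfrac43(-Z)^{3/2}=\varphi(y_-)-\varphi(y_+).
\]
From \eqref{ycritpmsF} and the critical point relation $\partial_{\tilde y}\varphi=0$, I will derive $\cos y_\pm=\tilde\alpha\phi(y_\pm)/(s(1+x))$ together with $\phi(y_+)\phi(y_-)=s^2-(1+x)^2$; combining these with the explicit values of $\sin y_\pm$ yields, after a short trigonometric manipulation, the key identities
\[
\cos\!\Bigl(\frac{y_+-y_-}{2}\Bigr)=\frac{\tilde\alpha}{1+x},\qquad \sin\!\Bigl(\frac{y_++y_-}{2}\Bigr)=\frac{\tilde\alpha}{s},
\]
equivalently $y_+-y_-=2\arccos(\tilde\alpha/(1+x))$ and $(y_++y_-)/2=\arcsin(\tilde\alpha/s)$ near the merging.

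Substituting these into $\varphi(y_\pm)=\tilde\alpha y_\pm+\phi(y_\pm)$ and invoking the integral representation \eqref{tildezet>} of $\tilde\zeta$ will produce on the one hand
\[
\varphi(y_-)-\varphi(y_+)=2\sqrt{(1+x)^2-\tilde\alpha^2}-2\tilde\alpha\arccos\!\bigl(\tilde\alpha/(1+x)\bigr)=\tfrac43\,\tilde\alpha\bigl(-\tilde\zeta((1+x)/\tilde\alpha)\bigr)^{3/2},
\]
which forces $Z=\tilde\alpha^{2/3}\tilde\zeta((1+x)/\tilde\alpha)$, and on the other
\[
\Gamma_0=\tilde\alpha\arcsin(\tilde\alpha/s)+\sqrt{s^2-\tilde\alpha^2},
\]
a formula which, remarkably, is independent of $x$ (the $(1+x)$ factors cancel out). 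The claimed Taylor expansion at $\tilde\alpha=1$ then follows from $\partial_{\tilde\alpha}\Gamma_0=\arcsin(\tilde\alpha/s)$ and $\partial^2_{\tilde\alpha}\Gamma_0=(s^2-\tilde\alpha^2)^{-1/2}$, which give exactly the constant, linear and quadratic terms asserted in the lemma.

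The main technical step is the simultaneous identification of $y_+-y_-$ and $y_++y_-$ from \eqref{ycritpmsF}; once those are in hand, the matching with the Airy cubic is mechanical and the smoothness of $\mathtt Y\leftrightarrow\sigma$ reduces to the non-degeneracy at third order established at the outset. The regime $\tilde\alpha>1+x$, where $y_\pm$ become complex conjugate and $Z>0$, is handled by analytic continuation using the alternative representation \eqref{tildezet<} of $\tilde\zeta$; the same formulas persist on the ``elliptic'' side, so no separate argument is needed there.
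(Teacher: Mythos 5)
Your proof is correct and follows a genuinely different route from the paper at the identification step. Both start with the Chester--Friedman--Ursell/Malgrange normal form $\varphi=\sigma^3/3+\sigma Z+\Gamma_0$ and both extract $\Gamma_0$ and $Z$ by matching the sum and difference of critical values; the divergence is in how the identifications are carried out. The paper differentiates the difference $\varphi(y_-)-\varphi(y_+)=\tfrac43\tilde\alpha(-\tilde\zeta^{\#})^{3/2}$ with respect to $x$, plugs in the eikonal equation to obtain the ODE $-\tilde\zeta^{\#}(\partial_x\tilde\zeta^{\#})^2=1-\tilde\alpha^2/(1+x)^2$, and then appeals to the uniqueness statement of Lemma~\ref{lemzeta}; similarly it shows $\partial_x\Gamma=0$ by differentiating the sum, and then expands $\Gamma_0$ in $\tilde\alpha$ through a somewhat involved computation of $\partial_{\tilde\alpha}(y_++y_-)$. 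You instead produce the closed-form identities $\cos\bigl(\tfrac{y_+-y_-}{2}\bigr)=\tfrac{\tilde\alpha}{1+x}$ and $\sin\bigl(\tfrac{y_++y_-}{2}\bigr)=\tfrac{\tilde\alpha}{s}$ directly from \eqref{ycritpmsF}, which immediately give $\varphi(y_-)-\varphi(y_+)=2\sqrt{(1+x)^2-\tilde\alpha^2}-2\tilde\alpha\arccos(\tilde\alpha/(1+x))$; comparison with the integral representation \eqref{tildezet>} then identifies $Z=\tilde\alpha^{2/3}\tilde\zeta((1+x)/\tilde\alpha)$ without any ODE argument, and the sum gives the closed form $\Gamma_0=\tilde\alpha\arcsin(\tilde\alpha/s)+\sqrt{s^2-\tilde\alpha^2}$ from which the $x$-independence and the Taylor coefficients $\partial_{\tilde\alpha}\Gamma_0=\arcsin(\tilde\alpha/s)$, $\partial^2_{\tilde\alpha}\Gamma_0=(s^2-\tilde\alpha^2)^{-1/2}$ drop out instantly. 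Your route buys explicit closed forms for $\Gamma_0$ and for the split/merged critical points, replaces an ODE-plus-uniqueness argument by a one-line comparison of primitives, and makes the $x$-independence of $\Gamma_0$ visible rather than inferred; the paper's ODE argument has the advantage of being more robust in settings where closed trigonometric identities are not available. One minor imprecision: the degeneracy is the simultaneous condition $\tilde y=y_*(x)$ \emph{and} $\tilde\alpha=1+x$ (the former kills $\partial^2_{\tilde y}\varphi$, the latter kills $\partial_{\tilde y}\varphi$ there), and your computed value $\partial^3_{\tilde y}\varphi|_{y_*(x)}=1+x$ holds for all $\tilde\alpha$ since $\partial^k_{\tilde y}\varphi=\partial^k_{\tilde y}\phi$ for $k\geq 2$.
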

\begin{proof}
As the phase $\varphi$ has degenerate critical points of order exactly two, it follows from \cite{CFU} that there exists a unique change of variables $\mathtt{Y}\mapsto \sigma$ which is smooth and satisfying $\frac{d\mathtt{Y}}{d{\sigma}}\notin\{0,\infty\}$ and that there exist smooth functions $\zeta^{\#}(x,\tilde\alpha,s)$ and $\Gamma(x,\tilde\alpha,s)$ such that
\begin{equation}\label{fctss}
\varphi(x,y_*(x)+\mathtt{Y},\tilde\alpha,s)=\frac{{\sigma}^3}{3}+{\sigma}\zeta^{\#}(x,\tilde\alpha,s)+\Gamma(x,\tilde\alpha,s).
\end{equation}
As the change of coordinates is regular the critical points $\mathtt{Y}_{\pm}:=y_{\pm}(x,\tilde\alpha)-y_*(x)$ of $\varphi$ must correspond to 
$\sigma_{\pm}=\pm\sqrt{-{\zeta^{\#}}(x,\tilde\alpha,s)}$.
Write $\zeta^{\#}(x,\tilde\alpha,s):=\tilde\alpha^{\frac23}\tilde{\zeta}^{\#}\big(\frac{1+x}{\tilde\alpha},\tilde\alpha,s\big)$. 
We will show that $\tilde\zeta^{\#}$ satisfies the same equation as $\tilde\zeta$ in \eqref{lemzeta}.
As the critical values of the two functions in \eqref{fctss} must coincide, we have
\begin{equation}\label{sum-diff}
\varphi(x,y_*(x)+\mathtt{Y}_{\pm},\tilde\alpha,s)=\mp\frac23{(-\zeta^{\#})}^{\frac32}(x,\tilde\alpha,s)+\Gamma(x,\tilde\alpha,s),
\end{equation}
from which we deduce 
$\frac 43 \tilde\alpha{(-\tilde\zeta^{\#})}^{\frac32}(\frac{1+x}{\tilde\alpha},x,s)=\varphi(x,y_{-},\tilde{\alpha},s)-\varphi(x,y_{+},\tilde\alpha,s)$.
Taking the derivative with respect to $x$ in the last equation yields (with $y_{\pm}=y_*(x)+\mathtt{Y}_{\pm}$)
\begin{equation}\label{eltilde}
\begin{aligned}
2(-\partial_x\tilde\zeta^{\#})(-\tilde\zeta^{\#})^{\frac12}=&\partial_x\phi(x,y_*(x)+\mathtt{Y}_{-},0,s)-\partial_x\phi(x,y_*(x)+\mathtt{Y}_{+},0,s)\\
									  &-\partial_x y_+\partial_y\varphi(x,y_+,\tilde\alpha,s)+\partial_x y_{-} \partial_y\varphi(x,y_-,\tilde\alpha,s).
\end{aligned}\end{equation}
The last two terms in the second line of \eqref{eltilde} vanish as $y_{\pm}$ are the critical points of the function $\varphi$ with respect to $y$ ; for the same reason we have that $\partial_y\phi(x,y_{\pm}(x,\tilde\alpha),0,s)=-\tilde\alpha$. As $\phi(x,y,0,s)$ satisfies the eikonal equation $(\partial_x\phi)^2(x,y,0,s)+\frac{1}{(1+x)^2}(\partial_y\phi)^2(x,y,0,s)=1$,
then $(\partial_x\phi(x,y_{\pm}(x),0,s))^2=1-\frac{\tilde\alpha^2}{(1+x)^2}$.
Moreover, $\partial_x\phi_{|y_{\pm}}=\frac{s}{\phi(x,y_{\pm},0,s)}(\tilde\rho-\sin(y_{\pm}))$ (with $\tilde\rho=\frac{1+x}{s}$) which is non positive in the ``$y_+$ case" and positive in the ``$y_{-}$ case".
Eventually we obtain, using \eqref{eltilde} and the right signs of $\partial_x\phi$, $-\tilde\zeta^{\#}[-\partial_x\tilde\zeta^{\#}]^2=1-\frac{\tilde\alpha^2}{(1+x)^2}$,
which is the same equation as in Lemma \ref{lemzeta} with $\rho=\frac{1+x}{\tilde\alpha}=(1+x)\frac{\sqrt{1-\gamma^2}}{\alpha}$. As the degenerate critical point occurs at $\sigma=0$, hence at $\zeta^{\#}=0$, we deduce by uniqueness of the solution that $\tilde\zeta^{\#}=\tilde\zeta=\tilde\zeta(\frac{1+x}{\tilde\alpha})$.\\

Next, we compute the explicit form of the function $\Gamma(x,\tilde\alpha,s)$. Taking the sum in \eqref{sum-diff} gives $\Gamma(x,\tilde\alpha,s)=\tfrac12(\varphi(x,y_+(x),\tilde\alpha,s)+\varphi(x,y_{-}(x),\tilde{\alpha},s))$ ; taking the derivative w.r.t. $x$ yields $\partial_x\Gamma(x,\tilde\alpha,s)=0$. As such, $\Gamma$ is independent of $x$ and we define $\Gamma_0(\tilde\alpha,s):=\Gamma(0,\tilde\alpha,s)$, then
\[
\Gamma_0(\tilde\alpha,s)=\frac{1}{2}\big((y_++y_-)\tilde\alpha+\phi(0,y_+,0,s)+\phi(0,y_-,0,s)\big),
\]
where $y_{\pm}=y_{\pm}(0)$. 
For small $x\geq 0$ and for $y$ in a neighborhood of $y_*=y_*(0)$, $y$ remains sufficiently close to $y_*(x)$ : shrinking the support if necessary, we may assume $|y-y_*(x)|<1/2$. For $|y_{\pm}-y_*|<1/2$ we may compute, using \eqref{ycritpmsF} with $x=0$, the first approximation of $y_{\pm}$ :  we have
\begin{equation}\label{ypm}
y_{\pm}=\arcsin \Big(\frac{\tilde\alpha^2}{s}\pm\sqrt{1-\tilde\alpha^2}\sqrt{1-\frac{\tilde\alpha^2}{s^2}}\Big), \quad y_*=\arcsin(\frac 1s).
\end{equation}
As $\Gamma_0(\tilde\alpha,s)=\frac 12(\varphi(0,y_+,\tilde\alpha,s)+\varphi(0,y_-,\tilde\alpha,s))$ and $\partial_y\varphi|_{y_{\pm}}=0$ then
$\partial_{\tilde\alpha}\Gamma_0=\frac 12(y_++y_-)+\frac 12 \sum_{\pm}\partial_{\tilde\alpha}y_{\pm}\partial_y\varphi|_{y_{\pm}}=\frac 12(y_++y_-)$. This yields $\Gamma_0(1,s)=\sqrt{s^2-1}+\arcsin \frac{1}{s}$ and $\partial_{\tilde\alpha}\Gamma_0(1,s)=\arcsin(1/s)$. We need the higher order derivatives : using \eqref{ypm}, it follows that $(y_++y_-)$ reads as an asymptotic expansion of even powers 
of $\sqrt{1-\tilde\alpha^2}$ and with main term $\arcsin (\frac{\tilde\alpha^2}{s})$. We find, with $Z_{\pm}=\frac{\tilde\alpha^2}{s}\pm\sqrt{1-\tilde\alpha^2}\sqrt{1-\frac{\tilde\alpha^2}{s^2}}$, $Z_{\pm}|_{\tilde\alpha=1}=\frac{1}{s}$,
\[
\frac 12 \partial_{\tilde\alpha}(y_++y_-)=\frac{\tilde\alpha}{s}\Big(\frac{1}{\sqrt{1-Z_+^2}}+\frac{1}{\sqrt{1-Z_-^2}}\Big)-\frac{\tilde\alpha(s^2+1-2\tilde\alpha^2)}{2s^2\sqrt{1-\tilde\alpha^2}\sqrt{1-\frac{\tilde\alpha^2}{s^2}}}\Big(\frac{1}{\sqrt{1-Z_+^2}}-\frac{1}{\sqrt{1-Z_-^2}}\Big).
\] 
As $\Big(\frac{1}{\sqrt{1-Z_+^2}}-\frac{1}{\sqrt{1-Z_-^2}}\Big)=\frac{Z_+^2-Z_-^2}{\sqrt{1-Z_+^2}\sqrt{1-Z_-^2}(\sqrt{1-Z_+^2}+\sqrt{1-Z_-^2})}$ and $Z_+^2-Z_-^2=4\frac{\tilde\alpha^2}{s}\sqrt{1-\tilde\alpha^2}\sqrt{1-\frac{\tilde\alpha^2}{s^2}}$,
\[
\frac 12 \partial_{\tilde\alpha}(y_++y_-)=\frac{\tilde\alpha}{s}\Big(\frac{1}{\sqrt{1-Z_+^2}}+\frac{1}{\sqrt{1-Z_-^2}}\Big)-\frac{2\tilde\alpha^3(s^2+1-2\tilde\alpha^2)}{s^3\sqrt{1-Z_+^2}\sqrt{1-Z_-^2}(\sqrt{1-Z_+^2}+\sqrt{1-Z_-^2})}.
\]
At $\tilde\alpha=1$ we obtain $\partial^2_{\tilde\alpha}\Gamma_0(1,s)=\frac 12 \partial_{\tilde\alpha}(y_++y_-)|_{\tilde\alpha=1}=\frac{1}{\sqrt{s^2-1}}$. In the same way we notice that all the higher order derivatives of $\Gamma_0$ come with a factor $\frac{1}{\sqrt{s^2-1}}$. The proof is achieved.
\end{proof}
After the changes of coordinates $\tilde y=y_*(x)+\mathtt{Y}$, $\mathtt{Y}\rightarrow \sigma$, $\sigma =(\tau\sqrt{1-\gamma^2})^{-1/3}\tilde \sigma $ we obtain $w_{j,gl}(Q,Q_0,\tau)$ as follows (with $\mathtt{Y}=\mathtt{Y}(\sigma)=\mathtt{Y}((\tau\sqrt{1-\gamma^2})^{-1/3}\tilde\sigma)$)
\[
\tau^{2+\frac 12-\frac 13}\int \frac{\psi_j(1-\gamma^2)(1-\gamma^2)^{\frac 14-\frac 16}\chi_{\varepsilon_1}(\tilde\alpha)}{\phi^{1/2}(x,y_*(x)+\mathtt{Y},0,s)}\frac{d\mathtt{Y}}{d\sigma} e^{i\tau(z\gamma+\sqrt{1-\gamma^2}(y\tilde\alpha-\Gamma_0(\tilde\alpha,s)))} e^{-i(\frac{\tilde\sigma^3}{3}+\tilde\sigma (\tau\sqrt{1-\gamma^2})^{2/3}\tilde\zeta(\frac{1+x}{\tilde\alpha}))}d\tilde\sigma d\tilde\alpha d\gamma.
\]
At this point we let again $\alpha=\sqrt{1-\gamma^2}\tilde\alpha$. Following \cite{CFU}, we integrate by parts in $\tilde\sigma$ and apply the Malgrange theorem to write $w_{j,gl}$ under the form $w_{j,gl}=T_{j,\tau}(F_{j,\tau})$, where the operator $T_{j,\tau}$ has the same phase as $T_{\tau}$ and symbols $a_j,b_j$ which are asymptotic expansions with small parameter $h/2^{-j}s$ and where the function $\hat{F}_{j,\tau}$ has phase $-\tau\sqrt{1-\gamma^2}\Gamma_0(\frac{\alpha}{\sqrt{1-\gamma^2}},s)$ and symbol $\frac{\tau^{\frac 12-\frac 13}}{(s^2-1)^{1/4}}(1-\gamma^2)^{\frac{1}{12}-\frac 12}\psi_j(1-\gamma^2)f_j$, where $f_j$ is an asymptotic expansion with parameter $h/2^{-j}s$. Notice that, if for $j=0$ the powers of $(1-\gamma^2)$ in play no role in \eqref{formF} or in \eqref{u_xomega2} as $\psi_0(1-\gamma^2)$ is supported in $[\frac{1}{64},2]$, for $1\leq j\leq j(s,h)$ it is essential to keep track of them.

\subsection{The "non-glancing" parts of $u^{+}_{j}$, $0\leq j\leq j(s,h)$}\label{sectransparam}
In this section we describe the form of $u^+_{j,he,h}$ whose incoming part equals $u^+_{free,j,he,h}:=\int e^{it\tau}\chi(h\tau)w_{j,he}d\tau$. We obtain as before $w_{j,he}(Q,Q_0,\tau)$ under the form \eqref{wjglform} but where $\chi_{\varepsilon}(\tilde\alpha)$ is now replaced by $(1-\chi_{\varepsilon}(\tilde\alpha))$. The phase $\tau(z\gamma+\sqrt{1-\gamma^2}((y-\tilde y)\tilde\alpha-\phi(x,\tilde y,0,s)))$ has two critical points $y_{\pm}(x,\tilde\alpha)$ satisfying \eqref{ycritpmsF} such that $|y_+(x,\tilde\alpha)-y_-(x,\tilde\alpha)|\gtrsim\varepsilon$, as $\tilde \alpha$ stays away from a fixed neighborhood of $1$ on the support of $1-\chi_{\varepsilon}$ and it is stationary with respect to $\tilde\alpha$ when $\tilde y=y$. 

The stationary phase applies with large parameter $\tau \sqrt{1-\gamma^2}\sim 2^{-j}/h$ and gives, modulo $O((h2^{j})^{\infty})$ terms,
 \begin{equation}\label{dew1he}
w_{j,he}(Q,Q_0,\tau)=\tau^{1+1/2}\int \frac{\psi_j(1-\gamma^2)(1-\chi_{\varepsilon}(\partial_y\phi(x,y,0,s)))}{\sqrt{\phi(x,y,0,s)}}(1-\gamma^2)^{\frac 14-\frac12}\tilde\sigma^{\pm}_{free,j,he}e^{i\tau(z\gamma-\sqrt{1-\gamma^2}\phi(x,y,0,s))}  d\gamma.
\end{equation}
Recall from \eqref{ycritpmsF} that $\phi(x, y_{\pm}(x,\tilde\alpha), 0,s)=\sqrt{s^2-\tilde\alpha^2}\mp \sqrt{(1+x)^2-\tilde\alpha^2}$. 
Here $\tilde\sigma^{\pm}_{free,j,he}$ are classical symbols that read as asymptotic expansion with small parameter $h2^{j}$. Let now $1-\gamma^2=2^{-2j}\varphi^2$, then $\varphi\sim 1$ on the support of $\psi_j(2^{-2j}\varphi^2)=\psi(\varphi^2)$ and $d\gamma/d\varphi\sim 2^{-2j}$. The phase $\tau(z\sqrt{1-2^{-2j}\varphi^2}-2^{-j}\varphi\phi(x,y,0,s))$ is stationary when $2^{-j}(-z)\sim \phi(x,y,0,s)$ and its second order derivative equals $\tau (-z)2^{-2j}/\sqrt{1-2^{-2j}\varphi}^3$. At the critical points $\tau (-z)2^{-2j}\sim 2^{-j}s/h\gtrsim (s/h)^{2/3}$, so the stationary phase yields, modulo $O((h/s)^{\infty})$,
 \begin{equation}\label{dew1henotint}
w_{j,he}(Q,Q_0,\tau)=\tau\tilde\psi\big(\frac{\phi(x,y,0,s)}{2^{-j}(-z)}\big)\frac{(1-\chi_{\varepsilon}(\partial_y\phi(x,y,0,s)))}{\phi(x,y,0,s)}2^{-2j+j/2+j/2}\sigma^{\pm}_{free,j,he}e^{-i\tau\phi(x,y,z,s)},
\end{equation}
where $\tilde\psi$ is a smooth cutoff supported near $1$, equal to $0$ near $0$ and such that $\tilde\psi=1$ on the support of $\psi$. The symbols $\sigma^{\pm}_{free,j,he}$ are asymptotic expansions with main contribution $\tilde\sigma^{\pm}_{free,j,he}$ and small parameter $h2^{j}/s$.

If we denote $\Sigma_{free,j}$ the factor of $e^{-i\tau\phi(x,y, z,s)}$ in \eqref{dew1henotint}, then $u^+_{free,j,he,h}=\int e^{i\tau(t-\phi(x,y,z,s))}\chi(h\tau)\Sigma_{free,j}d\tau$. After the reflection on the boundary, the solution to the wave equation with Dirichlet boundary condition reads as $\int e^{i\tau(t-\phi_R(x,y,z,s))}\chi(h\tau)\Sigma_{R,j} d\tau$, where $\phi_R$ satisfies the eikonal equation \eqref{MT-para} and the boundary condition $\phi_R|_{x=0}=\phi|_{x=0}$ and $\partial_x\phi_R|_{x=0}=-\partial_x\phi|_{x=0}$. The symbol $\Sigma_{R,j}$ is an asymptotic expansion with small parameter $(\tau2^{-j}s)^{-1}$ that reads as $\Sigma_{R,j}(\cdot,\tau)=\sum_{k}\tau^{-k}\Sigma_{R,k}$, where $\Sigma_{R,k}$ solve a system of the transport equations and $\Sigma_{R,j}|_{x=0}=\Sigma_{free,j}|_{x=0}$. 
We obtain $\partial_x u^+_{j,he,h}|_{x=0}=\int e^{i\tau(t-\phi(0,y,z,s))}\chi(h\tau)(-i)\tau \Sigma_{j} (y,z,s,\tau)d\tau$, 
where $\Sigma_j$ is a classical symbol that reads as an asymptotic expansion with small parameters $\tau^{-1}, (\tau2^j s)^{-1}$ and whose main contribution equals $2i\partial_x\phi(0,y,z,s) \Sigma_j |_{x=0}$.
\begin{rmq}\label{rmqtransv}
On the support of $1-\chi_{\varepsilon}$ we have $1-\partial_y\phi|_{x=0}\gtrsim \varepsilon$ : from the eikonal equation, we obtain the following lower bound :
$(\partial_x\phi)^2|_{x=0}=\big(1-(1+x)^{-2}(\partial_y\phi)^2-(\partial_z\phi)^2\big)\Big|_{x=0}\geq c(\varepsilon)$, where $c(\varepsilon)>0$ depends only on $\varepsilon$.
As $\partial_x\phi|_{x=0} =\frac{1-s\sin y}{\phi(0,\theta,z,s)}$, this implies $s|\sin y_*-\sin y|\geq c(\varepsilon)\phi(0,y,z,s)$, where $y_*=\arcsin(1/s)$. 
\end{rmq}
For all $0\leq j\leq j(s,h)$ we eventually find, for all $P=(0,y,z)\in \partial\Omega$,
\begin{equation}\label{partxu1he2-2j}
\partial_xu^+_{j,he,h}(P,Q_0,t)=\int e^{i\tau(t-\phi(0,y,z,s))}\chi(h\tau)\frac{\tau^2}{\psi(0,y,z,s)}2^{-j}\sigma_{j,he}(y,z,s,\tau)d \tau,
\end{equation}
where $\sigma_{2^{-2j},he}$ is an asymptotic expansion with small parameters $\tau^{-1}, (\tau2^{-3j}s)^{-1}$ supported for $s|\sin y_*-\sin y|\geq c(\varepsilon)\phi(0,y,z,s)$ and $2^{-j}(-z)\sim \phi(0,y,0,s)$.

\section{High-frequency case. Dispersive estimates when $d(Q_0,\partial\Omega)\geq \sqrt{2}-1$} \label{sectdispcyl}

\subsection{Dispersion for the glancing part when $d(Q,\partial\Omega)\geq \sqrt{2}-1$} 
Let $Q_0=(s,0,0)$, $Q=((1+x_Q)\sin y_Q,(1+x_Q)\cos y_Q,z_Q)$ in $\Omega$, and assume $s\geq r:=1+x_Q\geq \sqrt{2}$.
We prove the following :
\begin{prop}\label{propglanc}
There exists $C>0$ such that for all $t>h$, the following holds uniformly with respect to $Q,Q_0$ such that  $s\geq r\geq \sqrt{2}$ where $s=1+x_{Q_0}$, $r=1+x_Q$: $\sum_{0\leq j\leq j(s,h)} |u^{\#}_{j,gl,h}(Q,Q_0,t)|\leq \frac{C}{h^2t}$.
\end{prop}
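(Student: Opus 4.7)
The plan is to plug the Melrose--Taylor formula for $\mathcal{F}(\partial_\nu u^+_{j,gl})(P,Q_0,\tau)$ obtained in Section \ref{sectgl} into the boundary layer representation \eqref{vhform}, turning $u^{\#}_{j,gl,h}$ into a single oscillatory integral with phase
\[
\tau\bigl(t+y_P\alpha+z_P\gamma-\sqrt{1-\gamma^2}\,\Gamma_0\bigl(\tfrac{\alpha}{\sqrt{1-\gamma^2}},s\bigr)-|Q-P|\bigr)
\]
and amplitude of the form $\chi(h\tau)\tau^{11/6}|Q-P|^{-1}A_+^{-1}(\tau^{2/3}\zeta_0(\alpha,\gamma))\sigma_j$, with $\sigma_j$ a smooth symbol supported where $1-\gamma^2\sim 2^{-2j}$ and $\tilde\alpha=\alpha/\sqrt{1-\gamma^2}$ stays close to $1$. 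For $1\leq j\leq j(s,h)$ the $j=0$ computation of Section \ref{sectgl} carries over verbatim, the only changes being the dyadic $\psi_j$ in the amplitude and the extra factor $(1-\gamma^2)^{-1/12}\sim 2^{j/6}$ coming from the symbol class of the Melrose--Taylor parametrix.

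First I would perform stationary phase in $P=(0,y_P,z_P)\in\partial\Omega$. The hypothesis $r,s\geq\sqrt{2}$ ensures that $|Q-P|$ is bounded below by a constant, so the tangential Hessian is nondegenerate of size $|Q-P^*|^{-2}$; the critical point $P^*(Q,\alpha,\gamma)$ is the point where the ray from $Q$ with tangential cotangent $(\alpha,\gamma)$ meets $\partial\Omega$. This collapses the $P$-integration and leaves a reduced oscillatory integral in $(\tau,\alpha,\gamma)$ whose phase $\Psi_j(t,Q,Q_0,\alpha,\gamma)$ is the classical reflected action and whose amplitude still carries the Airy factor $A_+^{-1}(\tau^{2/3}\zeta_0)$.

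Next I would use a Poisson-type expansion for the Airy factor,
\[
\frac{1}{A_+(z)}\sim\sum_{N\geq 1}c_N\,e^{-\ii N\frac{2}{3}(-z)^{3/2}},
\]
valid in the hyperbolic shadow $z=\tau^{2/3}\zeta_0<0$ with exponential decay in $N$ in the elliptic region $\zeta_0>0$, exactly as in \cite{IL}. The $N$-th term describes a ray that creeps along $\partial\Omega$ for an angular length proportional to $N(-\zeta_0)^{1/2}\tau^{-1/3}$ before leaving, and its phase combines with $\Psi_j$ to produce the total action of a broken geodesic with one creeping segment. For each $(N,j)$ the remaining integral is then analysed by stationary phase in $(\alpha,\gamma,\tau)$: integration in $\tau$ on the support of $\chi(h\cdot)$ gives $h^{-1}$, the $(\alpha,\gamma)$-Hessian has determinant of order $(2^{-j}\tau)^2\sim(2^{-j}/h)^2$ up to a benign geometric factor, and the stationary points sit on broken geodesics of total length $t$, whose transverse spreading produces the free-space dispersive factor $1/t$. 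Tracking the factors of $h$, $2^{-j}$ and $N$ — using the Airy tail $|A_+(\tau^{2/3}\zeta_0)|^{-1}\sim|\tau^{2/3}\zeta_0|^{1/4}\sim h^{-1/6}2^{-j/2}$ and the Taylor expansion of $\Gamma_0$ near $\tilde\alpha=1$ provided by Lemma \ref{lemgam0} — yields a bound of the form $|u^{\#}_{j,gl,h}(Q,Q_0,t)|\lesssim h^{-2}t^{-1}N^{-\kappa}2^{-j\delta}$ for some $\kappa,\delta>0$. Summing the geometric series in $N$ and the $j$-series, truncated at $j=j(s,h)$, closes the argument.

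The main obstacle will be the $(\alpha,\gamma)$-stationary phase uniformly in $j$: as $\tilde\alpha\to 1$ the critical manifold degenerates and the effective large parameter is only $\tau\sqrt{1-\gamma^2}\sim 2^{-j}/h$, so at $j=j(s,h)$ it drops to $(s/h)^{2/3}$ — still large, but barely enough to absorb the $2^{j/6}$ growth of $(1-\gamma^2)^{-1/12}$ and the $2^{-j/2}$ Airy weight once one also accounts for the $2^{-2j}$ area of the $\gamma$-support. Checking that all these powers conspire to give genuine decay in $j$ is what forces the precise cutoff $2^{-3j}s/h\gtrsim 1$, and is the reason the complementary regime must be treated by the Bessel--Hankel parametrix of Section \ref{secBesHan}. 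A secondary but delicate point is the loss of transversality in the $(y_P,z_P)$ stationary phase near the apparent contour $\mathcal{C}_{Q_0}$; there I would use the expansion of $\Gamma_0$ in $(1-\tilde\alpha)$ to recast the integral as a model diffraction integral of Airy type, whose analysis is parallel to but simpler than the construction of Section \ref{sectgl} itself.
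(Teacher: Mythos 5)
Your central technical step---the ``Poisson-type expansion'' $\tfrac{1}{A_+(z)}\sim\sum_{N\geq1}c_N\,e^{-\ii N\frac{2}{3}(-z)^{3/2}}$---does not exist. The function $A_+$ has no zeros on the real axis, so $1/A_+$ is smooth on $\mathbb{R}$ and admits no geometric-series or residue expansion over an integer $N$. From Lemma \ref{lem:Phi+}, for $z\to-\infty$ one has the \emph{single}-exponential asymptotics $A_+(z)\sim|z|^{-1/4}e^{-\ii\frac{2}{3}(-z)^{3/2}}$, hence $1/A_+(z)\sim|z|^{1/4}e^{+\ii\frac{2}{3}(-z)^{3/2}}\bigl(1+O(|z|^{-3/2})\bigr)$, while for $z\to+\infty$ the factor is exponentially small. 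The Wronskian relation $A'A_+-A'_+A=\ii e^{-\ii\pi/3}$ already collapses the reflected boundary data to a single $1/A_+$ (see \eqref{u_xomega2}); an $N$-sum of creeping rays would arise for the \emph{interior} problem, where the relevant denominator vanishes on the real axis, not here. Attributing this expansion to \cite{IL} is also inaccurate: \cite{IL} uses exactly the single-exponential Airy asymptotics, which is what the present paper does.

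The other structural step is also not available as stated: the hypothesis $r,s\geq\sqrt{2}$ keeps $|Q-P|\gtrsim1$ but does \emph{not} make the $(y_P,z_P)$-Hessian of $|Q-P|$ nondegenerate. The paper points out that $\partial_y^2\phi(x_Q,y-y_Q,0,1)$ vanishes at $y=y_Q\pm\arccos(1/r)$, i.e.\ along the apparent contour seen from $Q$, which is precisely where the remaining $\chi_\varepsilon(\tilde\alpha)$-support forces the integral to concentrate. Accordingly the paper stationary-phases out \emph{only} $z_P$ (where the Hessian is genuinely nondegenerate once $r\geq\sqrt{2}$), then applies a Chester--Friedman--Ursell normal form in $y_P$ (Lemma \ref{lemgam0r}) to cast the remaining integral into an Airy-type phase $\sigma^3/3+\sigma\tilde\alpha^{2/3}\tilde\zeta(1/\tilde\alpha)+\tilde\Gamma$, and finally works dyadically in $\beta:=\tau^{2/3}(1-\tilde\alpha)$ and $w:=\tau^{1/3}\sigma$ near the two branches $w_\pm\sim\pm\sqrt{2\beta}$. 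What you relegate to a ``secondary but delicate point'' is in fact the heart of the proof and the source of the $2^{-3j}s/h\gtrsim1$ threshold; your collapsed reduced integral in $(\tau,\alpha,\gamma)$ with a clean Hessian of order $(2^{-j}\tau)^2$ simply does not account for the degeneracy at $\tilde\alpha=1$. The summability in $j$ does ultimately come from non-positive powers of $2^j$ in the resulting bounds, as you anticipated, but there is no $N$-sum to control.
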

\begin{proof}
We write the details of the proof for $j=0$ while keeping track of the factors $\sqrt{1-\gamma^{2}}$. The proof of dispersive bounds for $1\leq j\leq j(s,h)$ will follows exactly in the same way as all stationary arguments follow for such values of $j$ and we will be able to sum up all the contributions as these bounds have additional non-positive powers of $2^j$. Let $j=0$ and set $ I_{0,gl}(Q,Q_0,\tau):=\int_{P\in\partial\Omega}\frac{\mathcal{F}(\partial_x u^+_{j,gl})(P,Q_0,\tau)}{4\pi\tau |P-Q|}e^{-i\tau|P-Q|}d\sigma(P)$. Then
\begin{equation}\label{totale}
u^{\#}_{0,gl,h}(Q,Q_0,t)=\frac{1}{4\pi} \int \chi(h\tau)e^{\ii t\tau} \tau
 I_{0,gl}(Q,Q_0,\tau) d\tau.
\end{equation}
Writing $|P-Q|=\phi(x_Q,y-y_Q,z-z_Q,1)$ for a point $P=(\sin y, \cos y, z)$ on the boundary $\partial\Omega$ and replacing  \eqref{u_xomega2} in \eqref{vhform} we find, after the change of coordinates $\alpha=\sqrt{1-\gamma^2}\tilde\alpha$,
\begin{equation}\label{IQQ0}
\begin{aligned}
I_{0,gl}(Q,Q_0,\tau)=
 \int \tau^{-1+2+\frac 56}e^{\ii\tau (z\gamma+\sqrt{1-\gamma^2}(y\tilde\alpha-\Gamma_0(\tilde\alpha,s))-\phi(x_Q,y-y_Q,z-z_Q,1))}\\
\times  \frac{f(\alpha,\gamma,\tau) b_{\del}(y,z,\tilde\alpha,\tau)}{\phi(x_Q,y-y_Q,z-z_Q,1)} \tfrac{(1-\gamma^2)^{-\frac{5}{12}+\frac 13+\frac 12}\psi_0(1-\gamma^2)\chi_{\varepsilon}(\tilde\alpha)}{(s^2-1)^{1/4}A_{+}(\tau^{\frac23}\zeta_0(\alpha,\gamma))}d\tilde\alpha d\gamma  dy dz.
\end{aligned}
\end{equation}
\begin{lemma}\label{lemI1gl}
There exists a constant $C>0$ such that $|I_{0,gl}(Q,Q_0,\tau)|\leq C/t $ 
uniformly with respect to $Q,Q_0$ and $t$ such that $\sqrt{s^2-1+z_Q^2}\sim t$.
Moreover, for $\frac{t}{\sqrt{s^2-1+z_Q^2}}\notin[1/4,4]$, we have $|I_{0,gl}(Q,Q_0,\tau)|\leq \frac{C}{\sqrt{s^2-1}}$.
\end{lemma}
If $\frac{t}{\sqrt{s^2-1+z_Q^2}}\in[1/4,4]$, the estimate of Proposition \ref{propglanc} follows using \eqref{totale} and Lemma \ref{lemI1gl}. If not,
the phase of \eqref{totale} is not stationary w.r.t. $\tau$ and we proceed by integrations by parts which give at most $O(h^{\infty}/t)$.
\end{proof}
\begin{proof}(Proof of Lemma \ref{lemI1gl}) We apply the stationary phase with respect to $z$ in the integral \eqref{IQQ0}: let $r=1+x_Q$ and set $z=z_Q+\tilde{z}\sqrt{1+r^2-2r\cos(y_Q-y)}$. As $r\geq \sqrt{2}$, this is well defined and $dz/d\tilde z=\phi(x_Q,y-y_Q,0,1)$. As $\phi(x_Q,y-y_Q,z-z_Q,1)=\phi(x_Q,y-y_Q,0,1)\sqrt{1+\tilde z^2}$ the phase of $I_{0,gl}$ becomes $\tau(z_Q\gamma-\sqrt{1-\gamma^2}(-y\tilde\alpha+\Gamma_0(\tilde\alpha,s))+\phi(x_Q,y-y_Q,0,1)(\tilde{z}\gamma-\sqrt{1+\tilde{z}^2}))$ and its critical point with respect to $\tilde z$ satisfies $\tilde{z}=\frac{\gamma}{\sqrt{1-\gamma^2}}$. As, in case $j$ large, this value is large, we renormalize $\tilde z$ by taking $\tilde z=\sqrt{\frac{w^2}{1-\gamma^2}-1}$; as such, the critical point is $w=1$ and the second order derivative of the phase equals $\tau \phi(x_Q,y-y_Q,0,1)\sqrt{1-\gamma^2}$. The stationary phase in $w$ yields a factor $\tau^{-1/2}\times(1-\gamma^2)^{-\frac12-\frac14}$ and the symbol $\tau^{1+5/6}\frac{b_{\partial}(1-\gamma^2)^{5/12}}{\phi(x_Q,y-y_Q,z-z_Q,1)}$ becomes $\tau^{1+1/3}(1-\gamma^2)^{\frac{5}{12}-\frac 34}\frac{\tilde b_{\partial}(y,z_Q,\alpha,\gamma,\tau)}{\phi^{1/2}(x_Q,y-y_Q,0,1)}$, where $\tilde b_{\partial}$ has main contribution $b_{\partial}$. We obtain
\begin{equation}\label{I1glbeforespha}
\begin{aligned}
I_{0,gl}(Q,Q_0,\tau)=&\tau^{\frac43}\int e^{\ii\tau(z_Q\gamma-\sqrt{1-\gamma^2}(-y\tilde\alpha+\Gamma_0(\tilde\alpha,s)+\phi(x_Q,y-y_Q,0,1)))}\\
&(1-\gamma^2)^{-1/3}\frac{\tilde b_{\partial}(y,Q,\tilde\alpha,\gamma,\tau)}{\phi^{1/2}(x_Q,y-y_Q,0,1)}\frac{f(\alpha,\gamma,\tau)\psi_0(1-\gamma^2) \chi_{\varepsilon}(\tilde\alpha)}{(s^2-1)^{1/4}A_{+}(\tau^{\frac23}\zeta_0(\alpha,\gamma))}
d\tilde\alpha d\gamma dy.
\end{aligned}
\end{equation}
The phase $\phi(x_Q,y-y_Q,0,1)$ has two degenerate critical points of order exactly two at $y=y_Q\pm\arccos(1/r)$, where $r=1+x_Q$. Near $y_Q-\arccos(1/r)$, its first order derivative equals $-1$, hence for $y$ near this point the phase of $I_{0,gl}$ is non-stationary w.r.t. $y$ and repeated integrations by parts yield $O(\frac{\tau^{-\infty}}{\sqrt{s^2-1}})$. Let $y_c:=y_Q+\arccos(1/r)$.
Notice that, if $y\in [0,2\pi)$ is sufficiently close to $y_*$ on the support of $I_{0,gl}$ (say $|y-y_*|\leq \frac{\pi}{16}$) and is such that $|y-y_c|\geq \frac{\pi}{8}$, then $1-\tilde\alpha$ has to be bounded from below by a fixed constant there where the phase of $I_{0,gl}$ is stationary w.r.t. $y$. Taking $\varepsilon$ smaller if necessary, it follows that for such value of $y$ outside a small, fixed neighborhood of $y_c$, $\tilde\alpha$ cannot belong to the support of $\chi_{\varepsilon}(\tilde\alpha)$. We are reduced to studying the integral \eqref{I1glbeforespha} for $|y-y_c|\leq \frac{\pi}{8}<1$. Let $\epsilon_1>0$ be small enough. We study separately the cases $|y-y_c|\leq \tau^{-1/3+\epsilon_1}$ and $\tau^{-1/3+\epsilon_1}\lesssim |y-y_c|\leq\frac{\pi}{8}$ ; to do that, we introduce a smooth cut-off $\chi_0$ supported in $[-2,2]$ and equal to $1$ on $[-3/2,3/2]$ and split $I_{0,gl}=I_{0,gl}^{\chi_0}+I_{0,gl}^{1-\chi_0}$, where $I_{0,gl}^{\chi}$ has the form \eqref{I1glbeforespha} with additional cut-off $\chi((y-y_c)\tau^{1/3-\epsilon_1})$.

\subsubsection{Case $\tau^{-1/3+\epsilon_1}\leq |y-y_c|\leq \frac{\pi}{8}$ : study of $I_{1,gl}^{1-\chi_0}$}
We set $\tilde\alpha=\tilde\alpha(\beta,\tau):=1-\tau^{-2/3}\beta$ : as on the support of $\chi_{\varepsilon_1}(\tilde\alpha)$ we have $1-\tilde\alpha\lesssim \varepsilon$, it follows that $\tau^{-2/3}\beta\lesssim \varepsilon$. This choice of coordinates is motivated by the behavior of the Airy factor $A_{+}(\tau^{\frac23}\zeta_0(\alpha,\gamma))$ : as $\tau^{2/3}\zeta_0(\alpha,\gamma)=\tau^{2/3}\alpha^{2/3}\tilde\zeta(\frac{\sqrt{1-\gamma^2}}{\alpha})=\tau^{2/3}\sqrt{1-\gamma^2}^{2/3}\tilde\alpha^{2/3}\tilde\zeta(\frac{1}{\tilde\alpha})$, then
\begin{equation}\label{A+}
\tau\alpha(-\tilde\zeta)^{3/2}(\frac{1}{\tilde\alpha})
 =\sqrt{2}\sqrt{1-\gamma^2}\beta^{3/2}(1+O(\tau^{-2/3}\beta)),
\end{equation}
where we used Lemma \ref{lemzeta}. As such, for $(\sqrt{2}\sqrt{1-\gamma^2}\beta^{3/2})^{2/3}$ large enough, $A_{+}(\tau^{\frac23}\zeta_0(\alpha,\gamma))$ does oscillate, while for $(\sqrt{2}\sqrt{1-\gamma^2}\beta^{3/2})^{2/3}$ bounded it may be brought into the symbol. Write $1=\chi_0(\beta)+(1-\chi_0)(\beta)$. 
On the support of $1-\chi_0(\beta)$ the Airy factor may oscillate and the phase function of $I_{0,gl}^{1-\chi_0}$ equals $z_Q\gamma-\sqrt{1-\gamma^2}\varphi$, where we have set
\begin{equation}\label{varphi}
\varphi(y,\tilde\alpha, r):=-y\tilde \alpha+\Gamma_0(\tilde\alpha,s)+\phi(x_Q,y-y_Q,0,1)-\frac 23(-\tilde\zeta)^{3/2}(\frac{1}{\tilde\alpha}).
\end{equation}
With $\varphi$ defined in \eqref{varphi} we have 
\begin{multline}\label{I1gl1-chi025}
I_{0,gl}^{1-\chi_0}(Q,Q_0,\tau)=\tau^{\frac43-\frac23}\int e^{-\ii\tau(z_Q\gamma-\sqrt{1-\gamma^2}\varphi)}\tilde \chi_{\varepsilon}(1-\tau^{-2/3}\tilde\beta)(1-\chi_0)((y-y_c)\tau^{1/3-\epsilon_1})\\
\beta^{1/4}(1-\gamma^2)^{\frac{1}{12}-\frac 13} (\tilde b f)(y,\beta,\gamma,\tau) \times\frac{\psi_0(1-\gamma^2)}{(s^2-1)^{1/4}\phi^{1/2}(x_Q,y-y_Q,0,1)}
 dy d\gamma d\beta,
\end{multline}
where the factor $\beta^{1/4}(1-\gamma^2)^{1/12}$ comes from the Airy term $A_+^{-1}$ (using \eqref{A+}). 
\begin{lemma}\label{lemgam0r}
Let $y=y_c+\mathtt{Y}$, where $y_c=y_Q+\arccos(1/r)$. There exists a unique change of variables $\mathtt{Y}\mapsto \sigma$ which is smooth and satisfying $\frac{d\mathtt{Y}}{d{\sigma}}\notin\{0,\infty\}$ such that, for $\tilde \zeta$ given by Lemma \ref{lemzeta}, we have
\begin{equation}\label{fctssfinr}
-(y_c+\mathtt{Y})\tilde\alpha+\phi(x_Q,y-y_Q+\mathtt{Y},0,1)=\frac{{\sigma}^3}{3}+{\sigma}\tilde\alpha^{2/3}\tilde\zeta(\frac{1}{\tilde\alpha})+\tilde\Gamma(\tilde\alpha,r),
\end{equation}
and where $\tilde\Gamma(\tilde\alpha,r):=\sqrt{r^2-1}-y_c\tilde\alpha+\frac{(1-\tilde\alpha)^2}{2\sqrt{r^2-1}}(1+O(1-\tilde\alpha)\big)$.
\end{lemma}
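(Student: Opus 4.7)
The plan is to follow the proof of Lemma \ref{lemgam0} line by line, exploiting the reciprocity between source and observation point: the boundary point $(0,y,0)$ now plays the role of the ``source'' at unit radial distance (so $s\leftrightarrow 1$), while $Q$ plays the role of the ``observation point'' at radial distance $r=1+x_Q$; the base angular variable shifts from $y_{*}(x)=\arcsin((1+x)/s)$ to $y_c$, the $y$-coordinate of the apparent contour of $Q$ seen from the boundary, and one checks by direct computation that $\phi(x_Q,y_c-y_Q,0,1)=\sqrt{r^2-1}$, the classical tangent length from an external point at distance $r$ to the unit circle.

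First I would set $\Psi(\mathtt{Y},\tilde\alpha,r):=-(y_c+\mathtt{Y})\tilde\alpha+\phi(x_Q,y-y_Q+\mathtt{Y},0,1)$ and identify the two critical points $\mathtt{Y}_{\pm}(\tilde\alpha,r)$ of $\Psi$ in $\mathtt{Y}$ by solving $\partial_{\tilde y}\phi(x_Q,\cdot,0,1)=\tilde\alpha$ (the analogue of \eqref{ycritpmsF} with $s$ replaced by $1$ and the source/observation roles exchanged). One verifies that the two branches coalesce at $\mathtt{Y}=0$ when $\tilde\alpha=1$ with $\partial^{2}_{\mathtt{Y}}\Psi|_{\mathtt{Y}=0,\tilde\alpha=1}=0$ and nonvanishing third-order derivative, so that the critical point is degenerate of order exactly two. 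The Chester--Friedman--Ursell (Malgrange preparation) theorem then yields a unique smooth change of variable $\mathtt{Y}\mapsto\sigma$ with $d\mathtt{Y}/d\sigma\notin\{0,\infty\}$ bringing $\Psi$ to the Airy normal form $\sigma^{3}/3+\sigma\,\zeta^{\#}(\tilde\alpha,r)+\tilde\Gamma(\tilde\alpha,r)$, with $\sigma_{\pm}=\pm\sqrt{-\zeta^{\#}}$ corresponding to $\mathtt{Y}_{\pm}$.

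To identify $\zeta^{\#}$ I would write $\zeta^{\#}=\tilde\alpha^{2/3}\tilde\zeta^{\#}(1/\tilde\alpha,r)$, differentiate the identity $\tfrac{4}{3}\tilde\alpha(-\tilde\zeta^{\#})^{3/2}=\Psi(\mathtt{Y}_{-})-\Psi(\mathtt{Y}_{+})$ with respect to $x_Q$, and use the eikonal equation for $\phi$ together with $\partial_{\tilde y}\phi|_{\mathtt{Y}_{\pm}}=\tilde\alpha$ to obtain the ODE $-\tilde\zeta^{\#}(\partial_{\rho}\tilde\zeta^{\#})^{2}=1-1/\rho^{2}$ with $\rho=r/\tilde\alpha$, subject to the vanishing initial condition at the coalescence; uniqueness in Lemma \ref{lemzeta} forces $\tilde\zeta^{\#}=\tilde\zeta(1/\tilde\alpha)$, so that $\zeta^{\#}=\tilde\alpha^{2/3}\tilde\zeta(1/\tilde\alpha)$ is independent of $r$, exactly as claimed.

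For $\tilde\Gamma$ I would take the average of the critical values, $\tilde\Gamma(\tilde\alpha,r)=\tfrac12[\Psi(\mathtt{Y}_{+})+\Psi(\mathtt{Y}_{-})]$. At $\tilde\alpha=1$ the two critical points collapse and the identity $\phi(x_Q,y_c-y_Q,0,1)=\sqrt{r^{2}-1}$ gives $\tilde\Gamma(1,r)=\sqrt{r^{2}-1}-y_c$. Since $\partial_{\mathtt{Y}}\Psi|_{\mathtt{Y}_{\pm}}=0$, implicit differentiation yields $\partial_{\tilde\alpha}\tilde\Gamma=-y_c-\tfrac12(\mathtt{Y}_{+}+\mathtt{Y}_{-})$, and iterating this as in Lemma \ref{lemgam0}, using the explicit leading-order expansion of $\mathtt{Y}_{\pm}$ in $1-\tilde\alpha$, one finds $\partial^{2}_{\tilde\alpha}\tilde\Gamma(1,r)=1/\sqrt{r^{2}-1}$, hence the stated Taylor expansion $\tilde\Gamma(\tilde\alpha,r)=\sqrt{r^2-1}-y_c\tilde\alpha+\frac{(1-\tilde\alpha)^2}{2\sqrt{r^2-1}}(1+O(1-\tilde\alpha))$.

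The main obstacle I expect is the bookkeeping of the asymptotic expansion of $\mathtt{Y}_{\pm}$ in $1-\tilde\alpha$ and checking uniformity in $r\geq\sqrt{2}$, namely that every higher-order derivative of $\tilde\Gamma$ at $\tilde\alpha=1$ carries a factor $1/\sqrt{r^{2}-1}$, which stays bounded in our regime but is what will ultimately govern the stationary-phase gain in the dispersion estimate. The reciprocity $s\leftrightarrow r$ makes this calculation a formal mirror of the one at the end of Lemma \ref{lemgam0}, so no genuinely new idea is required beyond that substitution.
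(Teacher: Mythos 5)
Your overall strategy — exploit the reciprocity $\phi(x_Q,\cdot,0,1)=\phi(0,\cdot,0,r)$ to transplant the argument of Lemma \ref{lemgam0}, then read off $\zeta^{\#}$ and $\tilde\Gamma$ — is the paper's, and the $\tilde\Gamma$ part of your plan is sound: the identities $\tilde\Gamma=\tfrac12\bigl(\Psi(\mathtt{Y}_+)+\Psi(\mathtt{Y}_-)\bigr)$, $\phi(x_Q,y_c-y_Q,0,1)=\sqrt{r^2-1}$, $\partial_{\tilde\alpha}\tilde\Gamma=-y_c-\tfrac12(\mathtt{Y}_++\mathtt{Y}_-)$ and $\partial_{\tilde\alpha}^2\tilde\Gamma(1,r)=1/\sqrt{r^2-1}$ are correct and reproduce what the paper does (the paper gets there slightly faster by observing $\tilde\Gamma=-(y_Q+\tfrac\pi2)\tilde\alpha+\Gamma_0(\tilde\alpha,r)$ via $\cos(\arccos(1/r)+\mathtt{Y})=\sin(\arcsin(1/r)-\mathtt{Y})$, then recycling the derivatives of $\Gamma_0$ from Lemma \ref{lemgam0}).

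The step identifying $\zeta^{\#}$, however, does not work as you have written it. You propose to differentiate $\tfrac43\tilde\alpha(-\tilde\zeta^{\#})^{3/2}=\Psi(\mathtt{Y}_-)-\Psi(\mathtt{Y}_+)$ with respect to $x_Q$ and obtain the Riccati equation of Lemma \ref{lemzeta} with $\rho=r/\tilde\alpha$. But $\partial_{x_Q}\phi(x_Q,\tilde y,0,1)=(r-\sin\tilde y)/\phi$ is strictly positive, hence has the \emph{same} sign at both critical points, so $\partial_{x_Q}\phi|_{\mathtt{Y}_-}-\partial_{x_Q}\phi|_{\mathtt{Y}_+}=0$ and the differentiated identity collapses to $0=0$. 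That is exactly why $\zeta^{\#}$ ends up independent of $r$ — but it also means this differentiation produces no ODE at all and cannot determine $\tilde\zeta^{\#}$. The internal inconsistency is already visible in your own formulas: with $\rho=r/\tilde\alpha$ the coalescence $\tilde\alpha=1$ sits at $\rho=r\neq 1$, so the vanishing initial condition is not at $\rho=1$, and uniqueness in Lemma \ref{lemzeta} cannot yield $\tilde\zeta(1/\tilde\alpha)$. The mechanism that makes the Riccati ODE appear in Lemma \ref{lemgam0} is that $\partial_x\phi$ changes sign across the apparent contour $y_*(x)$ (where $\partial_x\phi=0$), and $x$ there is the radial coordinate of the point being varied; here that role is played by the radial coordinate of $P$, which is fixed at $x=0$. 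The correct route, and what the paper actually does, is to use the reciprocity and the sign change $\mathtt{Y}\mapsto-\mathtt{Y}$ to identify the left-hand side of \eqref{fctssfinr}, up to the constant $-(y_Q+\tfrac\pi2)\tilde\alpha$, with $\varphi(0,\cdot,\tilde\alpha,r)$ of Lemma \ref{lemgam0} evaluated at $x=0$, $s=r$, and then simply specialize the conclusion $\zeta^{\#}=\tilde\alpha^{2/3}\tilde\zeta((1+x)/\tilde\alpha)$ of Lemma \ref{lemgam0} (whose Riccati ODE was obtained by varying $x$, not $x_Q$) to $x=0$, giving $\zeta^{\#}=\tilde\alpha^{2/3}\tilde\zeta(1/\tilde\alpha)$.
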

\begin{proof} 
We proceed exactly as in the proof of Lemma \ref{lemgam0} (where now $x=0$ and $s$ is replaced by $r$). As $y_c$ is the degenerate critical point of order $2$ of $\phi$, there exist a smooth change of variable $\mathtt{Y}\rightarrow \sigma$ and smooth phase functions $\zeta^{\#}$ and $\tilde\Gamma$ such that the LHS term in \eqref{fctssfinr} reads as $\frac{{\sigma}^3}{3}+{\sigma}\zeta^{\#}(\tilde\alpha,r)+\tilde\Gamma(\tilde\alpha,r)$. Exactly as in Lemma \ref{lemgam0} we obtain that $\zeta^{\#}=\tilde\alpha^{2/3}\tilde\zeta(\frac{1}{\tilde\alpha})$. It remains to determine $\tilde\Gamma$.
The two critical points satisfy 
\[
r\cos(\arccos(1/r)+\mathtt{Y}_{\pm})=\tilde\alpha^2\mp\sqrt{r^2-\tilde\alpha^2}\sqrt{1-\tilde\alpha^2}.
\]
We have as before $\phi(x_Q,y_c+\mathtt{Y}_{\pm},0,1)=\sqrt{r^2-\tilde\alpha^2}\pm\sqrt{1-\tilde\alpha^2}$.
As $\cos(\arccos(1/r)+\mathtt{Y})=\sin(\arcsin(1/r)-\mathtt{Y})$ we use the computations from Lemma \ref{lemgam0} to determine $\arcsin(1/r)-\frac 12(\mathtt{Y}_++\mathtt{Y}_-)$. As $-y_c=-y_Q-\arccos(1/r)=-y_Q-\frac{\pi}{2}+\arcsin(1/r)$ we obtain $\tilde\Gamma(\tilde\alpha,r)=-(y_Q+\frac{\pi}{2})\tilde\alpha+\Gamma_0(\tilde\alpha,r)$ where $\Gamma_0(\tilde\alpha,r)$ is the same as in Lemma \ref{lemgam0} and compute the derivatives of this new function at $\tilde\alpha=1$ using those of $\Gamma_0$ as follows
\[
\tilde\Gamma(1,r)=\sqrt{r^2-1}-y_c, 
\tilde\Gamma '(1,r)=-y_c, 
\tilde\Gamma''(1,r)=\frac{1}{\sqrt{r^2-1}}, \tilde\Gamma^{(k)}=\frac{c_k}{\sqrt{r^2-1}}(1+O(\frac{1}{\sqrt{r^2-1}})).
\]
\end{proof}
Using the changes of variable $y\rightarrow y_c+\mathtt{Y}$, $\mathtt{Y}\rightarrow \sigma$ from Lemma \ref{lemgam0} yields   
\[
\varphi(y,\tilde\alpha,r)=\frac{{\sigma}^3}{3}+{\sigma}\tilde\alpha^{2/3}\tilde\zeta(\frac{1}{\tilde\alpha})+\tilde\Gamma(\tilde\alpha,r)+\Gamma_0(\tilde\alpha,s)-\frac 23(-\tilde\zeta)^{3/2}(\frac{1}{\tilde\alpha}).
\]
Let $y=y_c+\mathtt{Y}$, $\mathtt{Y}\rightarrow\sigma$ as in the Lemma \ref{lemgam0r} and set moreover $\sigma=\tau^{-1/3}w$ : then $ \tau^{\epsilon_1}\lesssim |w|$ on the support of the symbol (and if $|\tau^{-1/3}w|\geq \frac{\pi}{4}$ the integral defining $I_{0,gl}^{1-\chi_0}$ is $O(\tau^{-\infty})$). We apply the stationary phase in $w$ near the critical points : let $\chi$ be a smooth cut-off supported in a fixed neighborhood of $1$ and equal to $1$ near $1$ and set $\chi_{\pm}:=\chi(\pm\frac{\sqrt{2\beta}}{w})$, $\tilde\alpha=1-\tau^{-2/3}\beta$; let also $\overline\chi:=1-\chi_{+}-\chi_-$. Write 
\[
I_{0,gl}^{1-\chi_0}(Q,Q_0,\tau)=\sum_{\chi\in\{ \chi_{\pm},\overline\chi\}}I_{0,gl}^{1-\chi_0,\chi},
\] 
where $I_{0,gl}^{1-\chi_0,\chi}$ are given by \eqref{I1gl1-chi025} with additional cutoffs $\chi(\frac{\sqrt{2\beta}}{w})$.
\begin{lemma}
For $w$ in a small, fixed neighborhood of $\pm\sqrt{2\beta}$, we have
\begin{multline}\label{I1gl1-chi023}
I_{0,gl}^{1-\chi_0,\chi_{\pm}}(Q,Q_0,\tau)=\tau^{4/3-2/3-1/3}\int e^{-\ii\tau(z_Q\gamma-\sqrt{1-\gamma^2}\varphi_{\pm})} \chi_{\varepsilon}(1-\tau^{-2/3}\beta)\Sigma_{\pm}(\beta,\gamma,\tau)(1-\gamma^2)^{\frac{1}{12}-\frac13-\frac14}\\
\times\frac{(1-\chi_0)(\mathtt{Y}(\tau^{-1/3}w)\tau^{1/3-\varepsilon_1})\psi_0(1-\gamma^2)}{(s^2-1)^{1/4}\phi^{1/2}(x_Q,\arccos(1/r)+\mathtt{Y}(\tau^{-1/3}w_{\pm}),0,1)}
d\gamma d\beta ,
\end{multline}
where $\varphi_{\pm}:=\mp\frac 23(-\tilde\zeta)^{3/2}(\frac{1}{\tilde\alpha})+\tilde\Gamma(\tilde\alpha,r)+\Gamma_0(\tilde\alpha,s)-\frac 23(-\tilde\zeta)^{3/2}(\frac{1}{\tilde\alpha})|_{\tilde\alpha=1-\tau^{-2/3}\beta}$. 
Here $\Sigma_{\pm}$ are asymptotic expansions with parameter $ \tau^{-\epsilon_1}$ and main contribution $\frac{d\mathtt{Y}}{d\sigma}\frac{\beta^{1/4}}{\sqrt{|w_{\pm}|}}\chi_{\pm}(\frac{\sqrt{2\beta}}{w_{\pm}})\Sigma(y_c+\tau^{-1/3}w_{\pm},\beta,\tau)$. 
\end{lemma}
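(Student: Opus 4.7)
The plan is to perform the stationary phase in $w$ after the two successive changes of variables provided by Lemma \ref{lemgam0r} (the shift $y=y_c+\mathtt{Y}$ and the Malgrange diffeomorphism $\mathtt{Y}\mapsto\sigma$) together with the rescaling $\sigma=\tau^{-1/3}w$. After these transformations, the phase $\tau\sqrt{1-\gamma^2}\varphi$ appearing in \eqref{I1gl1-chi025} becomes
\[
\sqrt{1-\gamma^2}\Bigl(\tfrac{w^3}{3}+w\,\tau^{2/3}\tilde\alpha^{2/3}\tilde\zeta(1/\tilde\alpha)\Bigr)+\tau\sqrt{1-\gamma^2}\Bigl(\tilde\Gamma(\tilde\alpha,r)+\Gamma_0(\tilde\alpha,s)-\tfrac{2}{3}(-\tilde\zeta)^{3/2}(1/\tilde\alpha)\Bigr),
\]
and only the first summand depends on $w$. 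Using $\tilde\alpha=1-\tau^{-2/3}\beta$ together with Lemma \ref{lemzeta} and \eqref{A+}, the coefficient $\tau^{2/3}\tilde\alpha^{2/3}(-\tilde\zeta)(1/\tilde\alpha)$ is of size $\beta$, so the $w$-phase has two simple non-degenerate critical points $w_\pm$ of modulus $\sim\sqrt{\beta}$, and the cut-offs $\chi_\pm\bigl(\sqrt{2\beta}/w\bigr)$ localize one of them at a time.

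Then I would apply classical (single critical point) stationary phase to the $w$-integration. The second derivative of the $w$-phase at $w_\pm$ equals $2\sqrt{1-\gamma^2}\,w_\pm$, so the method yields the factor $\sqrt{2\pi/|2\sqrt{1-\gamma^2}w_\pm|}\sim (1-\gamma^2)^{-1/4}|w_\pm|^{-1/2}$. Combined with the $(1-\gamma^2)^{1/12-1/3}\beta^{1/4}$ already in \eqref{I1gl1-chi025} (the $\beta^{1/4}$ being the modulus of the Airy inverse $A_+^{-1}$ in its oscillating regime, by \eqref{A+}) and with the Jacobian $\tfrac{d\mathtt{Y}}{d\sigma}$, this reproduces the amplitude $\tfrac{d\mathtt{Y}}{d\sigma}\chi_\pm(\sqrt{2\beta}/w_\pm)\beta^{1/4}/\sqrt{|w_\pm|}$ and the power $(1-\gamma^2)^{1/12-1/3-1/4}$ stated in the lemma. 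The extra $\tau^{-1/3}$ appearing in the prefactor $\tau^{4/3-2/3-1/3}$ comes from $d\sigma=\tau^{-1/3}dw$. The critical value of the oscillatory $w$-phase equals $\mp\tfrac{2}{3}\tilde\alpha(-\tilde\zeta)^{3/2}(1/\tilde\alpha)$; adding the $w$-independent contributions $\tilde\Gamma(\tilde\alpha,r)+\Gamma_0(\tilde\alpha,s)-\tfrac{2}{3}(-\tilde\zeta)^{3/2}(1/\tilde\alpha)$ produces exactly $\varphi_\pm$, up to the replacement of $\tilde\alpha$ by $1$ in the critical-value term, a correction of relative size $\tau^{-2/3}\beta$ that is absorbed into the subleading part of $\Sigma_\pm$.

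Finally, the stationary-phase remainder is controlled by successive powers of $|\phi''(w_\pm)|^{-1}\sim|w_\pm|^{-1}\sim\beta^{-1/2}$. The support of $(1-\chi_0)\bigl(\mathtt{Y}(\tau^{-1/3}w)\tau^{1/3-\epsilon_1}\bigr)$ forces $|w|\gtrsim \tau^{\epsilon_1}$, hence $\sqrt{\beta}\gtrsim \tau^{\epsilon_1}$ at the critical points; each step of the expansion therefore gains at least a factor $\tau^{-\epsilon_1}$, yielding the asymptotic expansion of $\Sigma_\pm$ in this small parameter with the stated main term.

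The main obstacle I expect is the careful bookkeeping through the Airy extraction, the two changes of variables and the stationary phase: in particular, verifying that the $\tilde\alpha$-factor in the critical value $\mp\tfrac{2}{3}\tilde\alpha(-\tilde\zeta)^{3/2}(1/\tilde\alpha)$ can be replaced by $1$ modulo symbolic contributions that remain subleading in the $\tau^{-\epsilon_1}$-expansion uniformly over the wide $\beta$-range $\tau^{2\epsilon_1}\lesssim\beta\lesssim\tau^{2/3}\varepsilon$ imposed by the supports of $1-\chi_0$ and $\chi_\varepsilon$, and that the non-principal terms of the stationary-phase expansion fit coherently into the same asymptotic expansion.
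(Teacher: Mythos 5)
Your proof follows the same route as the paper: the same chain of changes of variables (shift $y=y_c+\mathtt{Y}$, the Chester--Friedman--Ursell diffeomorphism $\mathtt{Y}\mapsto\sigma$ from Lemma \ref{lemgam0r}, then $\sigma=\tau^{-1/3}w$), followed by one-critical-point stationary phase in $w$, with the same bookkeeping of the $(1-\gamma^2)$-, $\beta$- and $\tau$-powers (the $(1-\gamma^2)^{-1/4}$ from the Hessian combining with the $(1-\gamma^2)^{1/12-1/3}$ already present, the $\tau^{-1/3}$ Jacobian, the $\beta^{1/4}$ from the Airy asymptotics, and the $|w_\pm|^{-1/2}$ Hessian factor). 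The support observation $\sqrt{2\beta}\gtrsim\tau^{\epsilon_1}$, which makes the stationary-phase parameter $\gtrsim\tau^{\epsilon_1}$, is the key point and you identify it correctly.

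One remark on the $\tilde\alpha$ factor you flag at the end. You correctly observe that the exact critical value of the $w$-dependent part of the phase is $\mp\frac{2}{3}\tilde\alpha(-\tilde\zeta)^{3/2}$, whereas the lemma's $\varphi_\pm$ drops the $\tilde\alpha$. Be careful, though: the phase error this creates, namely $\tau\sqrt{1-\gamma^2}\cdot\frac{2}{3}(1-\tilde\alpha)(-\tilde\zeta)^{3/2}\sim\sqrt{1-\gamma^2}\,\tau^{-2/3}\beta^{5/2}$, is of relative size $\tau^{-2/3}\beta$ but is not uniformly $O(1)$ over the full admissible range $\tau^{2\epsilon_1}\lesssim\beta\lesssim\varepsilon\tau^{2/3}$ (it can be as large as $\varepsilon^{5/2}\tau$ at the top of that range), so it cannot simply be exponentiated into a subleading symbol contribution. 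The correct reading is that $\varphi_\pm$ must carry the $\tilde\alpha$ factor (equivalently $\varphi_\pm:=\varphi|_{\sigma_\pm}$ exactly), and the subsequent Taylor expansions at $\tilde\alpha=1$ used in the later lemmas track the error in the phase rather than in the symbol. With that caveat, your proof and the paper's coincide.
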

\begin{proof}
The critical points are $w_{\pm}:=\pm\tau^{1/3}\sqrt{-\tilde\zeta(\frac{1}{1-\tau^{-2/3}\beta})}$ which gives $w_{\pm}=\pm\sqrt{2\beta}(1+O(\sqrt{2\tau^{-2/3}\beta}))$.
As $|w|\geq \tau^{\epsilon_1}$ on the support of $(1-\chi_0)(\mathtt{Y}(\tau^{-1/3}w)\tau^{1/3-\varepsilon_1})$ and $\frac{w}{\sqrt{2\beta}}\sim \pm1$ on the support of the symbol, we also have $\sqrt{2\beta}\gtrsim \tau^{\epsilon_1}$. 
At $w_{\pm}$, the second order derivative of the phase 
equals $\partial^2_{w}(\tau\sqrt{1-\gamma^2}\varphi)|_{w_{\pm}}=\sqrt{1-\gamma^2}w_{\pm}(1+O(\tau^{-1/3}w_{\pm}))$. As $\sqrt{1-\gamma^2}\geq 1/8$ on the support of $\psi_0$ and $|w|\geq \tau^{\epsilon_1}$ on the support of $(1-\chi_0)(w\tau^{-\epsilon_1})$, it follows that $\sqrt{1-\gamma^2}\times w_{\pm}\gtrsim \tau^{\epsilon_1}$ and the stationary phase applies at $w_{\pm}$ with a parameter larger than $\tau^{\epsilon_1}$. The exponent of the factor $(1-\gamma^2)$ is $1/12-1/3-1/4=-1/2$. The factor $\tau^{-1/3}$ before the integral \eqref{I1gl1-chi023} comes from the change of variables $\sigma\rightarrow w$. (For $1\leq j\leq j(s,h)$, replace $\tau$ by $\tau 2^{-j}$).
\end{proof}
We now consider the integral $I_{0,gl}^{1-\chi_0,\overline\chi}(Q,Q_0,\tau)$ whose symbol is supported for $\frac{w^2}{2\beta}\notin [1/2,3/2]$. 
\begin{lemma}\label{lemgam}
The stationary phase applies in $\gamma$ with large parameter $\tau$ and yields
\begin{multline}\label{I1gl1-chi02}
I_{0,gl}^{1-\chi_0,\overline\chi}(Q,Q_0,\tau)=\tau^{\frac43-1-\frac12}\int e^{-\ii\tau\sqrt{\varphi^2+z_Q^2}} \chi_{\varepsilon}(1-\tau^{-2/3}\beta)(1-\chi_0)(|w|\tau^{-\epsilon_1})\\
\beta^{1/4} \overline\chi(\frac{\sqrt{2\beta}}{w})\Sigma(y,\beta,\tau) \Big(\frac{\varphi^2}{\varphi^2+z_Q^2}\Big)^{\frac 12}\frac{1}{\varphi^{1/2}}\times\frac{\psi_0(\frac{\varphi^2}{\varphi^2+z_Q^2})}{(s^2-1)^{1/4}\phi^{1/2}(x_Q,y-y_Q,0,1)}
d\beta dy,
\end{multline}
where $\Sigma$ is an asymptotic expansion with small parameter $\tau^{-1}$ and main contribution $ b_{\partial}f$.
\end{lemma}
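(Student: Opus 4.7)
The plan is to apply a one-variable stationary phase in $\gamma$ to the integrand of $I_{0,gl}^{1-\chi_0,\overline\chi}$, which is \eqref{I1gl1-chi025} multiplied by the additional cutoff $\overline\chi(\sqrt{2\beta}/w)$. The crucial preliminary observation is that, in the parametrization $\tilde\alpha=1-\tau^{-2/3}\beta$, the phase $\varphi(y,\tilde\alpha,r)$ defined by \eqref{varphi} is independent of $\gamma$; hence the $\gamma$-dependence of the full exponent $-\ii\tau(z_Q\gamma-\sqrt{1-\gamma^2}\,\varphi)$ is elementary, with $\varphi$ playing the role of a parameter.

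First I would locate the critical point. The equation $z_Q+\gamma\varphi/\sqrt{1-\gamma^2}=0$ yields $\gamma_c=-z_Q/\sqrt{\varphi^2+z_Q^2}$ and $\sqrt{1-\gamma_c^2}=|\varphi|/\sqrt{\varphi^2+z_Q^2}$, and substitution produces a critical value $\mp\sqrt{\varphi^2+z_Q^2}$ (with a sign fixed by the geometric orientation of $\varphi$), giving the factor $e^{-\ii\tau\sqrt{\varphi^2+z_Q^2}}$. The second derivative $\partial_\gamma^2[z_Q\gamma-\sqrt{1-\gamma^2}\varphi]=\varphi/(1-\gamma^2)^{3/2}$ evaluates at $\gamma_c$ to $(\varphi^2+z_Q^2)^{3/2}/\varphi^2$; it is non-degenerate for $\varphi\neq 0$, and the effective stationary phase parameter $\tau(\varphi^2+z_Q^2)^{3/2}/\varphi^2\sim\tau|\varphi|$ is large since $|\varphi|\gtrsim\sqrt{s^2-1}+\sqrt{r^2-1}\gtrsim 1$ on the relevant support.

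Next I would check compatibility with the original cutoff $\psi_0(1-\gamma^2)$. Since $1-\gamma_c^2=\varphi^2/(\varphi^2+z_Q^2)$, the critical point lies in the support of $\psi_0$ precisely when $|\varphi|\gtrsim|z_Q|$, which accounts for the appearance of the new cutoff $\psi_0(\varphi^2/(\varphi^2+z_Q^2))$ in the lemma. In the opposite regime $|z_Q|\gg|\varphi|$, the $\gamma$-derivative of the phase is bounded below by $|z_Q|/2$ on the support of $\psi_0(1-\gamma^2)$, and repeated integration by parts in $\gamma$ produces $O(\tau^{-\infty})$ contributions. Applying the stationary phase formula in the main regime, I would combine the prefactor $\sqrt{2\pi}\,\tau^{-1/2}|\varphi|/(\varphi^2+z_Q^2)^{3/4}$ with the amplitude factor $(1-\gamma^2)^{1/12-1/3}=(1-\gamma^2)^{-1/4}$ evaluated at $\gamma_c$, namely $(\varphi^2+z_Q^2)^{1/4}/|\varphi|^{1/2}$, to recover $\tau^{-1/2}|\varphi|^{1/2}/(\varphi^2+z_Q^2)^{1/2}=\tau^{-1/2}(\varphi^2/(\varphi^2+z_Q^2))^{1/2}/\varphi^{1/2}$, matching the integrand of \eqref{I1gl1-chi02}. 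The final power $\tau^{4/3-1-1/2}$ combines the $\tau^{4/3-2/3}$ of \eqref{I1gl1-chi025}, a $\tau^{-1/3}$ from the $\sigma\mapsto w$ rescaling implicit in the definition of $\overline\chi$, and the $\tau^{-1/2}$ of the stationary phase step; the residual symbol $\Sigma(y,\beta,\tau)$ collects $b_\partial f$ at leading order together with the subleading corrections from the stationary phase expansion, with small parameter $\tau^{-1}$.

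The main obstacle is ensuring that, on the support of $\overline\chi(\sqrt{2\beta}/w)$, the $\gamma$-stationary phase analysis remains uniform in $\beta$: the Airy-type contribution $-\tfrac23(-\tilde\zeta)^{3/2}(1/\tilde\alpha)$ entering $\varphi$ has size $\tau^{-1}\beta^{3/2}$ and becomes comparable to the leading $\tau$-independent part of $\varphi$ when $\beta\sim\tau^{2/3}$. Since $\varphi$ is $\gamma$-independent this does not affect the location of $\gamma_c$, but both the non-degeneracy bound on $\partial_\gamma^2$ and the remainder estimate in the stationary phase expansion must be tracked with respect to the effective large parameter $\tau|\varphi|$, which remains uniformly large on the support of $\overline\chi(\sqrt{2\beta}/w)\,\psi_0(\varphi^2/(\varphi^2+z_Q^2))$.
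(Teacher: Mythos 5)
Your proof is correct and follows essentially the same route as the paper: locate the unique critical point $\gamma_c=-z_Q/\sqrt{\varphi^2+z_Q^2}$, compute the critical value $-\sqrt{\varphi^2+z_Q^2}$ and the second derivative $(\varphi^2+z_Q^2)^{3/2}/\varphi^2$, verify that $\varphi$ is bounded below so the large parameter is $\gtrsim\tau$, and track the exponents $1/12-1/3+3/4=1/2$ in the factor $\varphi^2/(\varphi^2+z_Q^2)$ and the $\tau$-powers $\tau^{4/3-2/3-1/3-1/2}$. The only place you are appreciably lighter than the paper is the lower bound $\varphi\gtrsim 1$: the paper actually derives $\varphi\geq\sqrt{s^2-1}$ by writing $\tilde\Gamma(\tilde\alpha,r)+\Gamma_0(\tilde\alpha,s)=\sqrt{r^2-1}+\sqrt{s^2-1}+(y_*-y_c)\tilde\alpha+O(1-\tilde\alpha)$ (from Lemmas \ref{lemgam0} and \ref{lemgam0r}) and checking that $\sqrt{r^2-1}\geq 4/5>5/8\geq|y_c-y_*|$ under the support constraints $|y-y_*|\leq\pi/16$, $|y-y_c|\leq\pi/8$, with $\varepsilon$ chosen small so that the $O(1-\tilde\alpha)$ and Airy tails do not spoil the sign — you assert $|\varphi|\gtrsim\sqrt{s^2-1}+\sqrt{r^2-1}\gtrsim 1$ without the cancellation check that makes this non-trivial, since $(y_*-y_c)\tilde\alpha$ can be negative and of comparable size. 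Aside from that omission, your treatment of the off-support regime $|z_Q|\gg|\varphi|$ by non-stationary phase, and your tracking of the $\beta$-dependence of the Airy-type term in $\varphi$, are consistent with and slightly more explicit than the paper's argument.
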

\begin{proof}
The critical point satisfies : $\gamma_c=-z_Q/\sqrt{\varphi^2+z_Q^2}$ and at $\gamma_c$, the second order derivative of the phase equals $\frac{\varphi}{\sqrt{1-\gamma_c^2}^3}=\varphi\times \frac{\sqrt{\varphi^2+z_Q^2}^3}{\varphi^3}\geq \varphi$ and its critical value equals $-\sqrt{z_Q^2+\varphi^2}$. 
In order to show that the stationary phase applies we will show that $\varphi\geq \sqrt{s^2-1}$. From Lemmas \ref{lemgam0r} and \ref{lemgam0} we have $\tilde\Gamma(\tilde\alpha,r)+\Gamma_0(\tilde\alpha,s)=\sqrt{r^2-1}+\sqrt{s^2-1}+(y_*-y_c)\tilde\alpha +O(1-\tilde\alpha)$. As $y$ is close to $y_*$ on the support of the symbol $I_{1,gl}$ ($|y-y_*|\leq \frac{\pi}{16}$) and $|y-y_c|\leq \frac{\pi}{8}$ (these constants may be shrunk if necessary), then $|y_c-y_*|\leq \frac{3\pi}{16}<\frac{5}{8}$ while $\sqrt{r^2-1}\geq \sqrt{2}\sqrt{\sqrt{2}-1}\geq \frac 45$. Moreover, on the support of $\chi_{\varepsilon}(\tilde\alpha)$ we have $|1-\tilde\alpha|\lesssim \varepsilon$ so we conclude taking $\varepsilon_1$ small enough compared to $\varepsilon_0$. The stationary phase yields \eqref{I1gl1-chi02}. The exponent of $\Big(\frac{\varphi^2}{\varphi^2+z_Q^2}\Big)$ equals $\frac{1}{12}-\frac 13+\frac 34$, where the last term comes from the second order derivative.
\end{proof}
\begin{cor} 
We have $I_{0,gl}^{1-\chi_0,\overline\chi}(Q,Q_0,\tau)=O(\tau^{-\infty}/t)$. Moreover, modulo $O(\tau^{-\infty}/t)$,
\begin{multline}\label{I1gl1-chi023_}
I_{0,gl}^{1-\chi_0,\chi_{\pm}}(Q,Q_0,\tau)=\tau^{4/3-1-1/2}\int e^{-\ii\tau\sqrt{\varphi_{\pm}^2+z_Q^2}} \chi_{\varepsilon_1}(1-\tau^{-2/3}\beta)\\
\frac{\tilde\Sigma_{\pm}(\beta,\tau)}{\varphi_{\pm}^{1/2}}\times\frac{\breve{\psi}_0(\frac{\varphi_{\pm}}{\sqrt{\varphi_{\pm}^2+z_Q^2}})}{(s^2-1)^{1/4}\phi^{1/2}(x_Q,y_c-y_Q+\mathtt{Y}(\tau^{-1/3}w_{\pm}),0,1)}
d\gamma d\beta ,
\end{multline}
where $\breve{\psi}_0(\cdot)=(\cdot)^{1/2}\tilde\chi_1$ and $\tilde\Sigma_-$ is a classical symbol with main contribution $\Sigma(\beta,\gamma_c,\tau)$.
\end{cor}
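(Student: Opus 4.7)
The corollary splits into two disjoint subcases: on the support of $\overline\chi$ the auxiliary variable $w$ stays bounded away from the critical points $\pm\sqrt{2\beta}$ of the $\sigma$-phase, and I would treat this part by non-stationary phase in $w$; on the support of $\chi_\pm$ I would apply one last stationary phase in $\gamma$ to the already-reduced expression \eqref{I1gl1-chi023}. My plan is to carry out both pieces in the $(\beta,\gamma,w)$ variables inherited from Lemma \ref{lemgam0r}.

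\textbf{Step 1 (the $\overline\chi$ part).} I would go back \emph{before} the stationary phase in $\gamma$ performed in Lemma \ref{lemgam}, so that I have the full integral in $(\beta,\gamma,w)$. By Lemma \ref{lemgam0r} the $w$-dependent part of the phase is
\[
\tau\sqrt{1-\gamma^2}\,\varphi = \sqrt{1-\gamma^2}\Big(\tfrac{w^3}{3} + w\,\tau^{2/3}\tilde\alpha^{2/3}\tilde\zeta(1/\tilde\alpha)\Big) + (\text{$w$-independent}),
\]
and \eqref{A+} shows that $\tau^{2/3}\tilde\alpha^{2/3}\tilde\zeta(1/\tilde\alpha) = -w_\pm^2(1+O(\tau^{-2/3}\beta))$ with $w_\pm^2 \sim \beta$. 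On the support of $\overline\chi\cdot(1-\chi_0)(|w|\tau^{-\epsilon_1})$ one has $|w^2-w_\pm^2|\gtrsim \max(w^2,\beta)\gtrsim \tau^{2\epsilon_1}$ while $\sqrt{1-\gamma^2}\sim 1$, so $|\partial_w(\tau\sqrt{1-\gamma^2}\,\varphi)|\gtrsim \tau^{2\epsilon_1}$. Iterated integration by parts in $w$ then gives $O(\tau^{-\infty})$. The extra factor $1/t$ is extracted when this bound is inserted back into \eqref{totale}: $\tau$-IBP is admissible away from the shell $t\sim\sqrt{s^2-1+z_Q^2}$ by non-stationary phase in $\tau$, while inside that shell the rapid decay in $\tau$ combined with $\tau\sim h^{-1}$ and $t>h$ already beats $1/t$.

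\textbf{Step 2 (the $\chi_\pm$ part).} Starting from \eqref{I1gl1-chi023} I would apply stationary phase in $\gamma$ to the phase $-\tau(z_Q\gamma-\sqrt{1-\gamma^2}\varphi_\pm)$. Since $\varphi_\pm$ is $\gamma$-independent, the unique critical point is $\gamma_c=-z_Q/\sqrt{\varphi_\pm^2+z_Q^2}$, with critical value $-\sqrt{\varphi_\pm^2+z_Q^2}$ and second derivative $(\varphi_\pm^2+z_Q^2)^{3/2}/\varphi_\pm^2$. Because $\varphi_\pm\gtrsim\sqrt{s^2-1}\gtrsim 1$ (this uses $s\geq\sqrt{2}$, the cutoff $|1-\tilde\alpha|\leq\varepsilon$ with $\varepsilon$ small, and the explicit main terms of $\Gamma_0+\tilde\Gamma$ from Lemmas \ref{lemgam0}--\ref{lemgam0r}), the effective large parameter $\tau(\varphi_\pm^2+z_Q^2)^{3/2}/\varphi_\pm^2$ is $\gtrsim\tau$, so standard stationary phase produces the gain $\tau^{-1/2}\varphi_\pm/(\varphi_\pm^2+z_Q^2)^{3/4}$ plus an admissible remainder $O(\tau^{-3/2})$. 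For the bookkeeping: the prefactor $\tau^{4/3-2/3-1/3}$ of \eqref{I1gl1-chi023} multiplied by $\tau^{-1/2}$ reproduces $\tau^{4/3-1-1/2}$; the symbol power $(1-\gamma^2)^{1/12-1/3-1/4}=(1-\gamma^2)^{-1/2}$, evaluated at $\gamma_c$ and combined with the $(1-\gamma_c^2)^{3/4}$ and $\varphi_\pm^{-1/2}$ coming out of $|\text{2nd deriv}|^{-1/2}$, collapses to $\varphi_\pm^{-1/2}\cdot(\varphi_\pm/\sqrt{\varphi_\pm^2+z_Q^2})^{1/2}$, which after absorbing the support cutoff $\tilde\chi_1$ inherited from $\psi_0(1-\gamma^2)|_{\gamma_c}$ matches precisely $\varphi_\pm^{-1/2}\breve\psi_0(\varphi_\pm/\sqrt{\varphi_\pm^2+z_Q^2})$. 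All remaining smooth factors are packaged into the classical symbol $\tilde\Sigma_\pm$, whose leading term at $\gamma=\gamma_c$ is $\Sigma(\beta,\gamma_c,\tau)$, as claimed.

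The main difficulty is not any single computation but the simultaneous juggling of the three rescalings $\tilde\alpha=1-\tau^{-2/3}\beta$, $\sigma=\tau^{-1/3}w$, $\alpha=\sqrt{1-\gamma^2}\tilde\alpha$, together with the Airy asymptotics of $A_+^{-1}(\tau^{2/3}\zeta_0)$, so that no $(1-\gamma^2)$-power is silently dropped when passing between the pre- and post-stationary-phase forms. A secondary technical point is to verify that on $\overline\chi$ the gain $\tau^{-2\epsilon_1}$ per IBP truly dominates any polynomial loss in $\beta$ inside the amplitude, which holds since $\beta \lesssim \tau^{2/3}\varepsilon$ on the support of $\chi_\varepsilon(\tilde\alpha)$.
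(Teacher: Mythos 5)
Your Step 2 (the $\chi_\pm$ part) matches the paper: apply the $\gamma$-stationary phase of Lemma \ref{lemgam} to \eqref{I1gl1-chi023}, and your exponent bookkeeping ($\tau^{4/3-2/3-1/3}\cdot\tau^{-1/2}=\tau^{4/3-1-1/2}$, the collapse of $(1-\gamma_c^2)^{-1/2+3/4}\varphi_\pm^{-1/2}$ into $\varphi_\pm^{-1/2}\breve\psi_0$) is correct.

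Step 1, however, has a genuine gap. The corollary asserts the \emph{per-frequency} bound $I_{0,gl}^{1-\chi_0,\overline\chi}(Q,Q_0,\tau)=O(\tau^{-\infty}/t)$, and the factor $1/t$ must be produced before returning to the $\tau$-integral in \eqref{totale}. Your iterated $w$-integration by parts on the raw $(\beta,\gamma,w)$-integral only yields $O(\tau^{-\infty})$, and the surviving symbol factors $(s^2-1)^{-1/4}\phi^{-1/2}$ in \eqref{I1gl1-chi025} are merely bounded (since $\phi\geq r-1\geq\sqrt{2}-1$), not $\lesssim 1/t$. Your attempt to recover $1/t$ from the $\tau$-integration is circular: away from the shell you are proving the dispersive bound, not the stated kernel estimate; and inside the shell $t\sim\sqrt{s^2-1+z_Q^2}$ is completely decoupled from $\tau\sim 1/h$ (one can have $t\gg\tau$), so $\tau^{-\infty}$ does \emph{not} absorb a factor $1/t$. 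The paper instead first performs the $\gamma$-stationary phase (Lemma \ref{lemgam}) on $I_{0,gl}^{1-\chi_0,\overline\chi}$, which brings the Hessian factor $\varphi^{-1/2}$ into the symbol of \eqref{I1gl1-chi02}; combined with $(s^2-1)^{-1/4}$ and the bounds $\sqrt{s^2-1}\leq\varphi\leq 2\sqrt{s^2-1}$, $t\sim\sqrt{\varphi^2+z_Q^2}\leq 8\varphi$ on the support of $\psi_0$, one obtains $(\varphi\sqrt{s^2-1})^{-1/2}\lesssim 1/\sqrt{s^2-1}\lesssim 1/t$, after which the $w$-IBP supplies the $\tau^{-\infty}$. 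You should therefore reverse the order of your operations in Step 1 --- do the $\gamma$-stationary phase first (it is legitimate on the full $\overline\chi$-support since $\tau\varphi\gtrsim\tau$), extract $\varphi^{-1/2}$, and \emph{then} integrate by parts in $w$.
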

\begin{proof}
Using Lemma \ref{lemgam}, $I_{0,gl}^{1-\chi_0,\overline\chi}(Q,Q_0,\tau)$ of the form \eqref{I1gl1-chi02} : with symbol supported for $w$ far from $w_{\pm}$: repeated integrations by parts yield $O(\tau^{-\infty}/t)$ where the factor $1/t$ is obtained from the symbol $(\varphi\sqrt{s^2-1})^{-1/2}\lesssim 1/\sqrt{s^2-1}\lesssim 1/t$ as $2\sqrt{s^2-1}\geq \varphi\geq \sqrt{s^2-1}$, as $t\sim \sqrt{\varphi^2+z_Q^2}\leq 8\varphi$ on the support of $\psi_0$ and $r\geq\sqrt{2}$. 
To obtain \eqref{I1gl1-chi023_} we use the proof of Lemma \ref{lemgam} to \eqref{I1gl1-chi023} for the $\pm$ signs.
\end{proof}
Using the Corollary, we obtain $I_{0,gl}^{1-\chi_0}(Q,Q_0,\tau)=\sum_{\pm}I_{0,gl}^{1-\chi_0,\chi_{\pm}}+O(\tau^{-\infty}/t)$, where $I_{0,gl}^{1-\chi_0,\chi_{\pm}}$ are given in \eqref{I1gl1-chi023_}. We are left with the integration with respect to $\beta$ in the integrals \eqref{I1gl1-chi023_} whose symbols $(1-\chi_0)(\sqrt{\beta}\tau^{-\epsilon_1})\chi_{\varepsilon_1}(1-\tau^{-2/3}\beta)$ are supported for $\beta\gtrsim \tau^{2\epsilon_1}$ and $\tau^{-2/3}\beta\lesssim \varepsilon_1$. 
As $\beta$ takes values in a large interval, we consider separately dyadic intervals where $\beta\sim 2^{2k}$ and then sum all the contributions. Let $\tilde\chi$ supported near $1$ and equal to $1$ on $[\frac 34,\frac 54]$ such that 
\begin{equation}\label{sumk}
(1-\chi_0)(\sqrt{\beta}\tau^{-\epsilon_1})\chi_{\varepsilon_1}(1-\tau^{-2/3}\beta)\sum_k\tilde\chi (\beta 2^{-2k})=(1-\chi_0)(\sqrt{\beta}\tau^{-\epsilon_1})\chi_{\varepsilon_1}(1-\tau^{-2/3}\beta).
\end{equation}
On the support of $(1-\chi_0)(\sqrt{\beta}\tau^{-\epsilon_1})\chi_{\varepsilon_1}(1-\tau^{-2/3}\beta)$ we have $\tau^{2\epsilon_1}< \beta \lesssim \varepsilon_1 \tau^{2/3}$ and for each $k$ in the previous sum, $\tilde\chi (\beta 2^{-2k})$ localize at $\beta\sim 2^{2k}$. The sum is thus taken for $\epsilon_1\log_2(\tau)\leq k< \frac 13 \log_2(\tau)$.
Recall that $\varphi|_{\pm}=\varphi|_{w_{\pm}}$ where $\varphi_-=\tilde\Gamma(\tilde\alpha,r)+\Gamma_0(\tilde\alpha,s)$ and $\varphi_+=\varphi_--\frac 43(-\tilde\zeta)^{3/2}(\frac{1}{\tilde\alpha})$.
We deal separately with the $\pm$ signs. Let $I_{0,gl}^{1-\chi_0,\chi_\pm,k}$ denote the integrals in \eqref{I1gl1-chi023_} with additional cutoff $\tilde\chi(\beta2^{-2k})$. Using \eqref{sumk} we have
\[
I_{0,gl}^{1-\chi_0,\chi_\pm}=\sum_{k=\epsilon_1\log_2\tau}^{(\log_2\tau)/3} I_{0,gl}^{1-\chi_0,\chi_\pm,k}.
\]
\begin{lemma}
There exists a constant $C=C_+(\varepsilon)$ such that 
$|I_{0,gl}^{1-\chi_0,\chi_+}|\leq \sum_{k=\epsilon_1\log_2\tau}^{(\log_2\tau)/3} |I_{0,gl}^{1-\chi_0,\chi_+,k}|\leq C_+(\varepsilon)/t$.
\end{lemma}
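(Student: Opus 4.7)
The plan is to carry out a dyadic non-stationary/stationary phase analysis in $\beta$ on each piece $I^{1-\chi_0,\chi_+,k}_{0,gl}$ of \eqref{I1gl1-chi023_} restricted by $\tilde\chi(\beta 2^{-2k})$, and then to sum the resulting bounds across $\epsilon_1\log_2\tau\leq k\leq (\log_2\tau)/3$.

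The first step is to compute the first two $\beta$-derivatives of the phase $S(\beta):=-\tau\sqrt{\varphi_+^2+z_Q^2}$, where $\varphi_+=\tilde\Gamma(\tilde\alpha,r)+\Gamma_0(\tilde\alpha,s)-\tfrac43(-\tilde\zeta)^{3/2}(1/\tilde\alpha)$ and $\tilde\alpha=1-\tau^{-2/3}\beta$. Combining Lemmas \ref{lemgam0r}, \ref{lemgam0} and \ref{lemzeta} with the identity $\tfrac{d}{d\rho}(\sqrt{\rho^2-1}-\arccos(1/\rho))=\sqrt{\rho^2-1}/\rho$ and the expansion $\rho=1/\tilde\alpha=1+\tau^{-2/3}\beta+O((\tau^{-2/3}\beta)^2)$, a direct computation gives
\[
\partial_\beta\varphi_+=(y_c-y_*)\tau^{-2/3}-2\sqrt{2}\,\tau^{-1}\sqrt{\beta}+O(\tau^{-4/3}\beta),\qquad |\partial_\beta^2\varphi_+|\sim \tau^{-1}/\sqrt{\beta}.
\]
On the support of $\breve{\psi}_0$ one has $\varphi_+\sim\sqrt{\varphi_+^2+z_Q^2}\sim t\sim\sqrt{s^2-1}$, so $|S'|\sim \tau|\partial_\beta\varphi_+|$ while $|S''|\sim 1/\sqrt{\beta}$ near any critical point.

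Next I distinguish two regimes according to the sign of $y_c-y_*$. If $y_c\leq y_*$, both summands of $\partial_\beta\varphi_+$ share the same sign, so $|S'|\gtrsim 2^k$ uniformly on the support of $\tilde\chi(\beta 2^{-2k})$; repeated integrations by parts in $\beta$ then yield $|I^k|\lesssim \tau^{-1/6}\cdot 2^{(2-N)k}/t$ for any $N\geq 1$, and choosing $N\geq 3$ summable in $k\geq\epsilon_1\log_2\tau$ gives a total of $O(\tau^{-1/6-\epsilon_1}/t)$. If instead $y_c>y_*$ a unique critical point $\beta_c\sim\tau^{2/3}(y_c-y_*)^2/8$ occurs, at the dyadic scale $k_c\sim\tfrac12\log_2\beta_c$. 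For $|k-k_c|\geq C$ one summand dominates and $|S'|\gtrsim\max(\tau^{1/3}|y_c-y_*|,2^k)$, so the same IBP argument again contributes $O(\tau^{-1/6-\epsilon_1}/t)$. On the $O(1)$ pieces with $k\sim k_c$, stationary phase in $\beta$ applies with the above bound on $S''$: using $|S''(\beta_c)|\sim 1/\sqrt{\beta_c}$ one obtains the main term
\[
|I^{k_c}|\lesssim \tau^{-1/6}\cdot\beta_c^{1/4}\cdot\frac{1}{\sqrt{\varphi_+}\,(s^2-1)^{1/4}}\lesssim \tau^{-1/6}\cdot \tau^{1/6}|y_c-y_*|^{1/2}\cdot\frac{1}{t}\lesssim\frac{1}{t},
\]
since $|y_c-y_*|\leq 3\pi/16$ on the support. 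Summing both regimes yields $\sum_k|I^k|\lesssim C_+(\varepsilon)/t$.

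The main obstacle, I expect, will be the careful bookkeeping of the $\beta$-dependence of the amplitude $\tilde\Sigma_+(\beta,\tau)$ and of the Jacobian $d\mathtt{Y}/d\sigma$ evaluated at $\sigma=\tau^{-1/3}w_+(\beta)$, recalling that $\tilde\Sigma_+$ is itself an asymptotic expansion with small parameter $\tau^{-\epsilon_1}$ and that $w_+(\beta)=\tau^{1/3}\sqrt{-\tilde\zeta(1/\tilde\alpha)}$ behaves like $\sqrt{2\beta}$ up to relative error $O(\sqrt{\tau^{-2/3}\beta})$. One must check that every $\beta$-derivative appearing in the successive IBPs, and in the stationary-phase remainder, stays bounded by symbol-type estimates uniform on $\beta\sim 2^{2k}$, so that no powers of $\tau$ are lost; once this is done the scale-by-scale analysis above closes and delivers the claimed uniform $1/t$ bound.
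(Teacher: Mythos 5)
Your proposal is correct and follows essentially the same route as the paper's proof: dyadic localization $\beta\sim 2^{2k}$, identification of the unique stationary scale $2^{2k_c}\sim\tau^{2/3}(y_c-y_*)^2$, non-stationary estimates by integration by parts off that scale, and stationary phase at $k\sim k_c$ giving the $\beta_c^{1/4}\sim 2^{k/2}\lesssim\tau^{1/6}$ factor that cancels the $\tau^{-1/6}$ prefactor. The only cosmetic differences are that you work directly in $\beta$ rather than the paper's rescaled variable $\Xi=\beta 2^{-2k}$, and you split the sign of $y_c-y_*$ explicitly where the paper handles it implicitly through the condition $\bigl|\tau^{1/3}(y_c-y_*)2^{-k}-2\sqrt{2}\bigr|\geq 4$; also your per-IBP gain bookkeeping gives $2^{(2-N)k}$ where a more careful count gives $2^{(2-3N)k}$, but this does not affect the summability.
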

\begin{proof}
 At $w_+=\sqrt{2\beta}(1+O(\sqrt{2\tau^{-2/3}\beta}))$, the phase $\varphi_+$ is stationary when $\tau^{1/3}(y_c-y_*)=2\sqrt{2\beta}(1+O(\sqrt{\tau^{-2/3}\beta}))$. Let $\beta=2^{2k}\Xi$ on the support of $\tilde\chi(\beta 2^{-2k})$, with $\Xi\in [1/2,3/2]$. As
\begin{equation}
\begin{aligned}
\partial_{\Xi}(\tau\sqrt{\varphi^2_++z_Q^2})&=\frac{\varphi_+}{\sqrt{\varphi_+^2+z_Q^2}}\frac{\partial\beta}{\partial\Xi}(\tau\partial_\beta\varphi_+)\\
& =\frac{\varphi_+}{\sqrt{\varphi_+^2+z_Q^2}}  2^{2k+k}\Big(\frac{\tau^{1/3}(y_c-y_*)}{2^k}-2\sqrt{2\Xi}(1+O(\sqrt{\tau^{-2/3}\beta}))\Big),
\end{aligned}
\end{equation}
the phase is stationary for $\Xi\sim 1$ only when $\frac{\tau^{1/3}(y_c-y_*)}{2^k}\sim 2\sqrt{2}$ ; as
$\frac{\varphi_+}{\sqrt{\varphi_+^2+z_Q^2}}\geq 1/8 $ on the support of $\psi_0$ and as $2^{3k}\geq \tau^{3\epsilon_1/2}$, it follows that, for $|\frac{\tau^{1/3}(y_c-y_*)}{2^{k}}-2\sqrt{2}|\geq 4$ and $\Xi\in [1/2,3/2]$, repeated integrations by parts yield a contribution $O(\tau^{-\infty}/t)$. We deduce that there are at most a finite number of values of $k$ for which the phase may be stationary ; for such $k$ the stationary phase applies at the critical point $2\sqrt{2\Xi}\sim\frac{\tau^{1/3}(y_c-y_*)}{2^k}$ as, there, the second order derivative equals $-\frac{\varphi_+}{\sqrt{\varphi_+^2+z_Q^2}}  2^{3k}\times \frac{\sqrt{2}}{\sqrt{\Xi}}$ and $\Xi\sim 1$. For all such $k$, the stationary phase yields a factor $2^{2k}\times 2^{-3k/2}\times \sqrt{\varphi^2_++z_Q^2}^{1/2}/\varphi_+^{1/2}$, where the exponent $2k$ comes from the change of variables and the exponent $2^{-3k/2}$ from the second order derivative at $\Xi\sim 1$. As $2^{2k}\leq\tau^{2/3}$, the sum over all such $k$ yields at most $2^{k/2}\leq \tau^{1/6}$ and the exponent $1/6$ is canceled by the exponent of $\tau^{4/3-2/3-1/2-1/3}$ from $I_{0,gl}^{1-\chi_0,\chi_+}$. We conclude using that  $(\varphi_+\sqrt{s^2-1})^{-1/2}\leq C_+(\varepsilon)/t$, 
where $C_+(\varepsilon)$ depends only on $\varepsilon$.
\end{proof}

\begin{lemma}
There exists a constant $C=C_-(\varepsilon)$ such that
$\sum_{k=\epsilon_1 \log_2\tau}^{(\log_2\tau)/3} |I_{0,gl}^{1-\chi_0,\chi_-,k}(Q,Q_0,t)|\lesssim C_-(\varepsilon)/t$.
\end{lemma}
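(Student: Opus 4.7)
The strategy mirrors the one just used for the $+$ sign, exploiting the simpler form $\varphi_-(\tilde\alpha) = \tilde\Gamma(\tilde\alpha,r)+\Gamma_0(\tilde\alpha,s)$ (no creeping $\tilde\zeta$-correction). First I would compute, using the Taylor expansions from Lemmas \ref{lemgam0} and \ref{lemgam0r} at $\tilde\alpha = 1-\tau^{-2/3}\beta$,
\[
\tau\partial_\beta\varphi_- = \tau^{1/3}(y_c-y_*) + \tau^{-1/3}\beta\lambda\bigl(1+O(\tau^{-2/3}\beta)\bigr),\qquad \lambda := \frac{1}{\sqrt{r^2-1}}+\frac{1}{\sqrt{s^2-1}},
\]
so that (unlike in the $+$ case where an extra $-2\sqrt{2\beta}$ creeping contribution appeared) the two terms on the right carry the same sign whenever $y_c\geq y_*$ and can only cancel for $y_c<y_*$, at a unique point $\beta^* \simeq \tau^{2/3}(y_*-y_c)/\lambda$. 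Rescaling $\beta = 2^{2k}\Xi$, the $\Xi$-derivative of the phase equals $\frac{\varphi_-}{\sqrt{\varphi_-^2+z_Q^2}}\cdot 2^{2k}\tau^{1/3}\bigl[(y_c-y_*)+\tau^{-2/3}\cdot 2^{2k}\Xi\lambda(1+O(\cdot))\bigr]$, and $\beta^*$ lies in the support of at most a bounded number of the cutoffs $\tilde\chi(\beta\, 2^{-2k})$; denote the set of such indices by $K^*$.

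I would then split the sum according to whether $k\in K^*$ or not. For $k\notin K^*$ (which includes every $k$ when $y_c\geq y_*$), integration by parts in $\Xi\in[1/2,3/2]$ with effective large parameter $\gtrsim 2^{4k}\lambda\tau^{-1/3}$ yields $O(\tau^{-\infty}/t)$ as soon as $2^{4k}\gtrsim \tau^{1/3}/\lambda$; for the remaining shells $k\leq k_0:=\tfrac{1}{4}\log_2(\tau^{1/3}/\lambda)$, I would invoke the crude size estimate
\[
|I_{0,gl}^{1-\chi_0,\chi_-,k}(Q,Q_0,\tau)|\lesssim \tau^{-1/6}\cdot 2^{2k}\cdot \frac{1}{t\,(r^2-1)^{1/4}},
\]
obtained by integrating the amplitude over the shell and using $\varphi_-\sim \sqrt{s^2-1}\sim t$ on the support of $\breve{\psi}_0$ together with $\phi(x_Q,y_c-y_Q+\mathtt{Y}(\tau^{-1/3}w_-),0,1)\sim \sqrt{r^2-1}$ at the critical point $y_-$; summing the geometric series gives $\sum_{k\leq k_0}2^{2k}\lesssim \tau^{1/6}/\sqrt\lambda$. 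For $k\in K^*$, stationary phase in $\beta$ at $\beta^*$ contributes the factor $\sqrt{2\pi\tau^{1/3}/\lambda}$ coming from the non-degenerate Hessian $\tau\partial_\beta^2\Phi\sim \tau^{-1/3}\lambda$ (valid when the effective parameter $\tau^{-1/3}\lambda\cdot 2^{4k}$ exceeds $1$; otherwise the crude bound applies and is dominated by the same quantity).

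In every regime the contribution reduces to
\[
|I_{0,gl}^{1-\chi_0,\chi_-,k}(Q,Q_0,\tau)|\lesssim \frac{1}{\sqrt\lambda\cdot t\cdot(r^2-1)^{1/4}}.
\]
The main obstacle is to close this bound uniformly in $r,s$: $\lambda$ can be small when $r$ and $s$ are both large, but then $(r^2-1)^{1/4}$ is correspondingly large. This is resolved by the trivial lower bound $\lambda\geq 1/\sqrt{r^2-1}$, which forces $\sqrt\lambda\cdot(r^2-1)^{1/4}\geq 1$, and yields $\sum_k |I_{0,gl}^{1-\chi_0,\chi_-,k}|\lesssim C_-(\varepsilon)/t$ with $C_-(\varepsilon)$ depending only on $\varepsilon$, as claimed.
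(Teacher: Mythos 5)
Your proposal is correct and follows essentially the same route as the paper: Taylor-expand $\varphi_-$ via Lemmas \ref{lemgam0}--\ref{lemgam0r}, rescale $\beta=2^{2k}\Xi$, split according to whether the effective stationary-phase parameter $2^{4k}\tau^{-1/3}\lambda$ is bounded (crude bound plus geometric sum), whether the phase is stationary (stationary phase, finitely many shells), or neither (integration by parts), and close with the cancellation $\sqrt{\lambda}\,(r^2-1)^{1/4}\geq 1$ (the paper phrases the same observation as $(r^2-1)^{1/4}/\phi^{1/2}\lesssim 1$). The only minor imprecision is the claim that integration by parts yields $O(\tau^{-\infty}/t)$ as soon as $2^{4k}\gtrsim\tau^{1/3}/\lambda$: that decay only appears once $2^{4k}\lambda\tau^{-1/3}\gtrsim\tau^{\epsilon}$, but a single integration by parts already gives the geometrically summable gain $(2^{4k}\lambda\tau^{-1/3})^{-1}$ over $k>k_0$, which is what the paper uses.
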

\begin{proof}
We have $\varphi_-=\tilde\Gamma(\tilde\alpha,r)+\Gamma_0(\tilde\alpha,s)$ hence, for $\tilde\alpha=1-\tau^{-2/3}\beta$, we have
\begin{equation}
\begin{aligned}
\tau\varphi_-&=\tau\Big(\sqrt{r^2-1}+\sqrt{s^2-1}-(y_c-y_*)\tilde\alpha\\
&+\frac{(1-\tilde\alpha)^2}{2}\Big(\frac{1}{\sqrt{r^2-1}}(1+O(1-\tilde\alpha))+\frac{1}{\sqrt{s^2-1}}(1+O(1-\tilde\alpha))\Big)\Big)\\
\tau\partial_{\beta}\varphi_-&=\tau^{1/3}(y_c-y_*)+\tau^{-1/3}\beta\Big(\frac{1}{\sqrt{r^2-1}}(1+O(\tau^{-2/3}\beta))+\frac{1}{\sqrt{s^2-1}}(1+O(\tau^{-2/3}\beta))\Big).
\end{aligned}
\end{equation}
At $\beta=2^{2k}\Xi$ we have $\partial_{\Xi}(\tau\sqrt{\varphi^2_-+z_Q^2})=\frac{\varphi_-}{\sqrt{\varphi_-^2+z_Q^2}}2^{2k}\partial_{\beta}(\tau\varphi_-)|_{\beta=2^{2k}\Xi}$ hence
\[
\partial_{\Xi}(\tau\sqrt{\varphi^2_-+z_Q^2})=\frac{2^{4k}\tau^{-1/3}\varphi_-}{\sqrt{\varphi_-^2+z_Q^2}}\Big(\frac{\tau^{2/3}}{2^{2k}}(y_c-y_*)+ \Xi\big(\frac{1}{\sqrt{r^2-1}}(1+O(\tau^{-2/3}2^{2k}))+\frac{1}{\sqrt{s^2-1}}(1+O(\tau^{-2/3}2^{2k}))\big)\Big),
\]
 \[
 \partial^2_{\Xi}(\tau\sqrt{\varphi^2_-+z_Q^2})|_{\partial_{\beta}\varphi_-=0}=\frac{2^{4k}\tau^{-1/3}\varphi_-}{\sqrt{\varphi_-^2+z_Q^2}}\Big(\frac{1}{\sqrt{r^2-1}}(1+O(\tau^{-2/3}2^{2k}))+\frac{1}{\sqrt{s^2-1}}(1+O(\tau^{-2/3}2^{2k}))\Big).
 \]
 As $s\geq r$ and for $\frac{\varphi_-}{\sqrt{\varphi_-^2+z_Q^2}}$ on the support of $\psi_0$ we obtain a lower bound for the second order derivative of $\frac{2^{4k}\tau^{-1/3}}{\sqrt{r^2-1}}$. From now on we can proceed as in the case of $\varphi_+$ : if $\frac{2^{4k}\tau^{-1/3}}{\sqrt{r^2-1}}\geq \tau^{\epsilon}$ for some $\epsilon>0$, we apply the stationary phase if moreover $\frac{\tau^{2/3}\sqrt{r^2-1}}{2^{2k}}(y_*-y_c)\sim 1$ : the last condition reduces the number of such $k$ to at most three values for which we find
 \begin{equation}\label{mainI1gl}
 |I_{0,gl}^{1-\chi_0,\chi_-,k}(Q,Q_0,t)|\lesssim \frac{\tau^{-1/6}}{t}\times \frac{2^{2k}}{\phi^{1/2}}(\frac{\varphi_-}{\sqrt{\varphi_-^2+z_Q^2}})^{3/4-1/2}\times \big(\frac{2^{4k}\tau^{-1/3}}{\sqrt{r^2-1}}\big)^{-1/2}\sim 1/t,
\end{equation}
 where we used that  $(\varphi\sqrt{s^2-1})^{-1/2}\lesssim 1/t$ and $\phi\geq r-1$ to obtain $\frac{(r^2-1)^{1/4}}{\phi^{1/2}}\lesssim 1$. 
 If $\frac{\tau^{2/3}\sqrt{r^2-1}}{2^{2k}}(y_*-y_c)\notin [1/4,4]$, repeated integrations by parts yield a $O(\tau^{-\infty}/t)$ contribution (and we conclude using that $2^k\lesssim \tau^{1/3}$).\\

Fix $M>4$ large enough and consider $2^{4k}\tau^{-1/3}\frac{1}{\sqrt{r^2-1}}\in [M^2,\tau^{\epsilon'}]$ for some $\epsilon'>0$. As this parameter is large, we still may apply the stationary phase but we need to verify that the remainders are sufficiently small and that we can bound their sum. There is still a finite number of $k$ for which the phases may be stationary. At the critical points $\Xi_c$, the stationary phase applies and we obtain, for all $N\geq 1$,
\begin{multline}
I_{0,gl}^{1-\chi_0,\chi_-,k}(Q,Q_0,t)
=\tau^{-1/6}e^{-i\tau\sqrt{\varphi^2_-+z_Q^2}}\frac{\tilde\Sigma_-(2^{2k}\Xi_c,\tau)}{\varphi_-^{1/2}(s^2-1)^{1/4}}
\frac{2^{2k}\times |\partial^2_{\Xi}(\tau\sqrt{\varphi^2_-+z_Q^2})|^{-1/2}}{\phi^{1/2}(x_Q,\arccos(1/r)+\tau^{-1/3}w_-,0,1)}\\
+O\Big((\frac{2^{4k}\tau^{-1/3}}{\sqrt{r^2-1}})^{-N}\tau^{-1/6}\times \frac{2^{2k}}{\sqrt{s^2-1}(r-1)^{1/4}}\Big),
\end{multline}
where the main contribution of $I_{0,gl}^{1-\chi_0,\chi_-,k}(Q,Q_0,t)$ in the first line still satisfies \eqref{mainI1gl} and where the remainder in the second line is $O\Big((\frac{2^{4k}\tau^{-1/3}}{4\sqrt{r^2-1}})^{-N}/t\Big)$. In the second line we used $\phi\geq (r-1)^{1/2}$.
The bounds for the remainders follow using $\sup |\partial^2_{\Xi}(\tau\sqrt{\varphi^2_-+z_Q^2})|\geq \frac{2^{4k}\tau^{-1/3}}{\sqrt{r^2-1}}$. Notice that, taking one derivative of the cutoff $\chi_{\varepsilon}(\tau^{-2/3}2^{2k}\Xi)=\chi(\tau^{-2/3}2^{2k}\Xi/\varepsilon)$ yields a factor $\tau^{-2/3}2^{2k}/\varepsilon$ but, as $\Xi\sim 1$ on the support of $\chi(\beta2^{-2k})=\chi(\Xi)$, on the support of $\chi(\tau^{-2/3}2^{2k}\Xi/\varepsilon)\chi(\Xi)$ we have $\tau^{-2/3}2^{2k}/\varepsilon\lesssim 1$ hence for $M>4$ sufficiently large this factor doesn't change the contribution of the remainder.
For all $k$ s.t. the phase is not stationary, integration by parts 
yields a contribution of at most
\[
\tau^{-1/6}\frac{2^{2k}}{t(r-1)^{1/2}}\times (2^{-4k}\tau^{1/3}\sqrt{r^2-1})^{N+1}=\frac 1t \times  (2^{-2k}\tau^{1/6}\sqrt{r^2-1}^{1/2})\times  (2^{-4k}\tau^{1/3}\sqrt{r^2-1})^N
\]
for all $N\geq 0$. Let $N=0$ and sum over $k$ with $2^{4k}\tau^{-1/3}\frac{1}{\sqrt{r^2-1}}\in [M^2,\tau^{\epsilon}]$, then 
\[
\frac 1t \Big(\sum_{M^2\leq 2^{4k}\tau^{-1/3}\sqrt{r^2-1}^{-1}\leq \tau^{\epsilon'}} 2^{-2k}\Big)\times \tau^{1/6}\sqrt{r^2-1}^{1/2}\lesssim 1/(Mt).
\] 
Let now $k$ such that $\tau^{\epsilon_1}\lesssim 2^k$ and $2^{4k}\tau^{-1/3}\frac{1}{\sqrt{r^2-1}}\leq M^2$ for some large, fixed $M>1$. We bound each $I_{0,gl}^{1-\chi_0,\chi_-,k}$ by $\tau^{-1/6}\frac{2^{2k}}{t(r-1)^{1/4}}\lesssim M/t$ using $2^{2k}\leq M\tau^{1/6}\sqrt{r^2-1}^{1/2}$ and conclude.
\end{proof}
\begin{rmq}\label{rmqfacsmall1-gam}
In the two previous Lemmas, the bounds for $I_{0,gl}^{1-\chi_0,\chi_{\pm}}$ come with additional factors $ (\frac{\varphi_{\pm}}{\sqrt{\varphi_{\pm}^2+z_Q^2}})^{1/4}$. This is useful to keep in mind for the case when $1-\gamma^2$ behaves like $2^{-2j}$.
\end{rmq}
For $\beta$ on the support of $\chi_0(\beta)$, using \eqref{A+}, the Airy factor can be brought in the symbol. 
The phase of $I_{0,gl}^{1-\chi_0}$ equals $\tau(z_Q\gamma-\sqrt{1-\gamma^2}\varphi_0)$, where $\varphi_0:=(-(y_c-y_*+\tau^{-1/3}w)(1-\tau^{-2/3}\beta)+\sqrt{s^2-1}+\phi(x_Q,\arccos(1/r)+\tau^{-1/3}w,0,1))\geq \sqrt{s^2-1}$. As $\sqrt{1-\gamma^2}\times w\geq \tau^{\epsilon_1}$ on the support of $\psi_0(1-\gamma^2)(1-\chi_0)(w\tau^{-\epsilon_1})$, it follows that the phase is non-stationary in $w$ as $\beta\leq 2\ll \tau^{2\epsilon_1}\lesssim w^2/2$ and we integrate by parts to obtain $O(\tau^{-\infty}/t)$. 

\subsubsection{Case $|y-y_c|\leq 2\tau^{-1/3+\epsilon_1}$ : study of $I_{0,gl}^{\chi_0}$}

Let $y=y_c+\tau^{-1/3}w$, with $|w|\leq \tau^{\epsilon_1}$. 
As $\partial_w\phi(x_Q,\arccos(1/r)+\tau^{-1/3}w,0,1)=\tau^{-1/3}\Big(1-\tau^{-2/3}w^2/2(1+O(\tau^{-1/3}w))\Big)$, the derivative w.r.t. $w$ of phase of $I_{0,gl}^{\chi_0}$ equals 
\[
\tau^{1-1/3}\sqrt{1-\gamma^2}\Big(1-\tau^{-2/3}\beta-1+\tau^{-2/3}w^2/2(1+O(\tau^{-1/3}w))\Big)=\sqrt{1-\gamma^2}(-\beta+w^2/2(1+O(\tau^{-1/3}w)),
\]
hence, for $\beta\geq \tau^{2\epsilon_1}$ we perform repeated integrations by parts to obtain a $O(\tau^{-\infty}/t)$ contribution (using that the support in $w,\beta$ is bounded). We introduce $\chi_0(\beta\tau^{-2\epsilon_1})$ into the symbol of $I_{0,gl}^{\chi_0}$ without changing its contribution modulo $O(\tau^{-\infty}/t)$ terms. If we introduce moreover a cutoff $\chi_0(\beta)$ supported for $\beta\leq 2$, the Airy factor doesn't oscillate and may be brought into the symbol : in this case the phase of $I_{0,gl}^{\chi_0}$ is given by
\[
\tau(z_Q\gamma-\sqrt{1-\gamma^2}(-(y_c-y_*+\tau^{-1/3}w)(1-\tau^{-2/3}\beta)+\sqrt{s^2-1}+\phi(x_Q,\arccos(1/r)+\tau^{-1/3}w,0,1))).
\]
Let $\varphi_0:=-(y_c-y_*+\tau^{-1/3}w)(1-\tau^{-2/3}\beta)+\sqrt{s^2-1}+\phi(x_Q,\arccos(1/r)+\tau^{-1/3}w,0,1)$, then $\varphi_0\geq \sqrt{s^2-1}$ and the stationary phase w.r.t. $\gamma$ applies exactly as before. The critical point $\gamma$ satisfies $z_Q=\frac{\gamma}{\sqrt{1-\gamma^2}}\varphi_0$. The  contribution of $I_{0,gl}^{\chi_0}(Q,Q_0,\tau)$ is of the form \eqref{I1gl1-chi02} where moreover $\beta\leq 2$ and $|w|\leq \tau^{\epsilon_1}$. We then obtain
\begin{equation}\label{Ichi0}
|I_{0,gl}^{\chi_0}(Q,Q_0,\tau)|\lesssim \frac{\tau^{1/6-1/3}}{\varphi_0^{1/2}(s^2-1)^{1/4}}\times \tau^{\epsilon_1},
\end{equation}
where the exponents $1/6-1/3$ come from \eqref{I1gl1-chi02} and the change of variable $y=y_c+\tau^{-1/3}w$, and the factor $\tau^{\epsilon_1}$ from the size of the support in $w$. Let now $\beta\in [3/2, \tau^{2\epsilon_1}]$ on the support of $(1-\chi_0(\beta))\chi_0(\beta\tau^{2\epsilon_1})$, when the Airy factor does oscillate. We also have $\varphi\geq \sqrt{s^2-1}$ and the stationary phase w.r.t $\gamma$ applies. The corresponding contribution of $I_{0,gl}^{\chi_0}(Q,Q_0,\tau)$ may be bounded as in \eqref{Ichi0} but with an additional factor $\tau^{2\epsilon_1}$ arising from the support wr.t. $\beta\leq \tau^{2\epsilon_1}$. Taking $\varepsilon_1<1/18$ allows to conclude.
\end{proof}

\subsection{ Dispersive bounds when $d(Q,\partial\Omega)\leq\sqrt{2}-1\leq d(Q_0,\partial\Omega)$}

Let $1\leq r\leq \sqrt{2}\leq s$ and let $0\leq j\leq j(s,h)$. We proceed as in \cite[Section 3.3]{IL} to obtain directly the form of the reflected wave, which may be done using the Melrose and Taylor parametrix as the observation point $Q$ is close to the boundary; formula \eqref{vhform} becomes useless since $d(Q,\partial\Omega)$ may be arbitrarily small. By Proposition \ref{struttura} we are reduced to prove $|\sum_{j=0}^{j(s,h)}\int\chi(h\tau)\tau e^{it\tau}I_j(Q_0,Q,\tau)d\tau|\leq \frac{C}{h^2t} $ for a constant independent of $Q_0$ and $Q$, where we set 
\[
I_j(\tau,Q_0,Q):=\tau\int e^{\ii\tau(y\alpha+z\gamma)}\Big(a_jA_{+}(\tau^{2/3}\zeta)+b_j\tau^{-1/3}A'_{+}(\tau^{2/3}\zeta)\Big)\frac{A(\tau^{2/3}\zeta_0)}{A_{+}(\tau^{2/3}\zeta_0)} \psi_j(1-\gamma^2)\hat{F}_{j,\tau}(\alpha,\gamma)
d\alpha d\gamma
\]
obtained as done in the last part of Section \ref{sectgl}.
\begin{lemma}
There exists a constant $C>0$, such that, for all $Q$ in a small neighborhood of $\mathcal{C}_{Q_0}$, $|y-y_*|\leq \frac{\pi}{16}$ and $t\sim dist(Q_0,\partial\Omega)+dist(Q,\partial\Omega)$ the following holds
$\big|\sum_{j=0}^{j(s,h)}I_j(Q_0,Q,\tau)\big|\leq \frac{C}{t}$.
 \end{lemma}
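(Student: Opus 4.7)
The plan is to exploit the Wronskian-type decomposition $A=A_++A_-$ of the Airy function in order to split $I_j$ into an \emph{incoming} contribution (which reconstructs the free Melrose–Taylor parametrix at $Q$) and a \emph{reflected} one (which carries an extra oscillatory phase coming from the ratio $A_-/A_+$), and then to apply stationary phase to each piece and sum in $j$. Write
\[
\frac{A(\tau^{2/3}\zeta_0)}{A_{+}(\tau^{2/3}\zeta_0)}=1+\frac{A_{-}(\tau^{2/3}\zeta_0)}{A_{+}(\tau^{2/3}\zeta_0)},\qquad I_j=I_j^{in}+I_j^{ref}.
\]
The piece $I_j^{in}$ is exactly $M_\tau(J_\tau^{-1}(\cdot))$ without the reflection factor, that is, it equals $T_{j,\tau}(F_{j,\tau})$ evaluated at $Q$, i.e.\ the free incoming wave $w_{j}(Q,Q_0,\tau)$ in its Melrose–Taylor form. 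For that contribution the bound $|I_j^{in}|\lesssim 1/t$ with summability in $j$ follows from the standard $\mathbb{R}^3$ dispersive bound on $u_{free,j,h}$ together with the dyadic structure in $1-\gamma^2$ already used in Section \ref{sec:cylinder}: after the changes $\alpha=\sqrt{1-\gamma^2}\tilde\alpha$ and $1-\gamma^2=2^{-2j}\varphi^2$, the stationary-phase argument in $\gamma$ (i.e.\ in $\varphi$) applies with large parameter $\tau\sqrt{1-\gamma^2}\sim 2^{-j}/h\geq (s/h)^{2/3}$, exactly as in \eqref{dew1henotint}, and gives the free-wave bound; the sum over $0\le j\le j(s,h)$ of the corresponding symbol weights is then controlled by the same computation as in the glancing estimate of Proposition \ref{propglanc}.

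For the reflected part $I_j^{ref}$, I would use the asymptotic expansion of $A_{\pm}$ to write, in the oscillatory regime $\zeta_0<0$,
\[
\frac{A_{-}(\tau^{2/3}\zeta_0)}{A_{+}(\tau^{2/3}\zeta_0)}=\sigma_{\mathrm{ref}}(\tau^{2/3}\zeta_0)\,e^{-\ii\tfrac{4}{3}(-\tau^{2/3}\zeta_0)^{3/2}},
\]
with $\sigma_{\mathrm{ref}}$ a classical symbol of order $0$, and in the elliptic regime $\zeta_0\ge c>0$ use that $A_-/A_+=O(e^{-c\tau})$ so that the contribution is $O(\tau^{-\infty})$. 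Using Lemma \ref{lemzeta} to compute $(-\tilde\zeta)^{3/2}$, the total phase of $I_j^{ref}$ in the oscillatory regime becomes, after $\alpha=\sqrt{1-\gamma^2}\tilde\alpha$,
\[
\tau\bigl(y\alpha+z\gamma\bigr)+\tau\,\iota(x,y,z,\alpha,\gamma)\pm\tau(-\zeta)^{3/2}-\tau\sqrt{1-\gamma^2}\,\Gamma_0\!\Bigl(\tfrac{\alpha}{\sqrt{1-\gamma^2}},s\Bigr)-\tfrac{4}{3}(-\tau^{2/3}\zeta_0)^{3/2},
\]
with $\iota=y\alpha+z\gamma$ and $\zeta,\zeta_0$ given by \eqref{MT-para}. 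The Lagrangian described by this phase is precisely that of the once-reflected ray from $Q_0$ to $Q$ after tangential propagation along the cylinder; the $-\frac{4}{3}(-\tau^{2/3}\zeta_0)^{3/2}$ term is the arc-length spent on the shadow boundary.

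Next I perform the changes of variables of Lemmas \ref{lemgam0} and \ref{lemgam0r} (replacing $\tilde y$ by the Airy variable $\sigma$, with $x=x_Q$ playing the role previously played by $0$), and then the rescalings $\sigma=(\tau\sqrt{1-\gamma^2})^{-1/3}\tilde\sigma$, $\tilde\alpha=1-\tau^{-2/3}\beta$, $1-\gamma^2=2^{-2j}\varphi^2$, which are the very same that turned $w_{j,gl}$ into the parametrix form. After these reductions, $I_j^{ref}$ becomes an oscillatory integral with large parameter $\tau 2^{-j}\gtrsim(s/h)^{2/3}$ in the variable $\gamma$ (equivalently $\varphi$), and the analysis done in Section \ref{sectdispcyl}, in particular the case-by-case treatment $\chi_0(\beta)$ versus $(1-\chi_0)(\beta)$ and $\chi_\pm(\sqrt{2\beta}/w)$ versus $\overline\chi$, applies verbatim with $r=1+x_Q\le \sqrt{2}$ replacing the previous assumption $r\ge\sqrt{2}$. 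The only modification needed is in Lemma \ref{lemgam}: the lower bound $\varphi\gtrsim\sqrt{s^2-1}$ now holds since $s\geq\sqrt{2}$ forces $\Gamma_0(\tilde\alpha,s)\ge\sqrt{s^2-1}-O(\varepsilon)$, while the contribution from the new factor $\sqrt{r^2-1}$ is non-negative. The stationary phase in $\gamma$ then produces the decisive factor $1/\sqrt{\varphi^2+z_Q^2}^{1/2}\lesssim 1/\sqrt{t}$, which combined with $1/(s^2-1)^{1/4}\lesssim 1/\sqrt{t}$ from $\hat F_{j,\tau}$ gives $|I_j^{ref}|\lesssim 1/t$, uniformly in $j\le j(s,h)$.

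Finally, I sum over $j$. The $2^{-j}$-powers collected from $\psi_j(1-\gamma^2)$, from the Jacobians $d\gamma/d\varphi\sim 2^{-j}$, and from the symbols of $\hat F_{j,\tau}$ and of the Melrose–Taylor amplitudes combine to a geometric series in $2^{-j\delta}$ for some $\delta>0$, exactly as in Section \ref{sectgl} and Section \ref{sectransparam}; the sum is therefore bounded independently of $j(s,h)$ and yields the claimed $C/t$ bound. The main obstacle I expect is the matching at $\zeta_0=0$ between the oscillatory and exponentially small regimes of $A_-/A_+$, together with the uniform control of stationary phase in the deepest glancing dyadic block $j\sim j(s,h)$, where $1-\gamma^2$ is smallest: this requires that the symbol weights $(1-\gamma^2)^{-5/12}$ from \eqref{formF} and the Jacobians of the $\sigma,\beta,w$-rescalings cancel precisely, as verified in Lemma \ref{lemgam0r} and Remark \ref{rmqfacsmall1-gam}.
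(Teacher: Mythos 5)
The paper's own proof of this Lemma is a pointer to \cite[Lemma 3.25]{IL}, with the remark that the argument there applies because $Q$ is close to $\partial\Omega$. Your proposal correctly identifies the high-level ingredients (Airy-ratio decomposition, stationary phase, dyadic sum in $j$), but it misidentifies the structure of the calculation in a way that would make the argument collapse. The core problem is this sentence: ``the analysis done in Section \ref{sectdispcyl} ... applies verbatim with $r=1+x_Q\le\sqrt{2}$ replacing the previous assumption $r\ge\sqrt{2}$.'' This cannot be right, and the paper says so explicitly at the start of the subsection: ``formula \eqref{vhform} becomes useless since $d(Q,\partial\Omega)$ may be arbitrarily small.'' Lemmas \ref{lemgam0r} and \ref{lemgam} and the cut-offs $\chi_{\pm}(\sqrt{2\beta}/w)$, $\overline\chi$ were built around the spherical-wave integral \eqref{vhform} over the boundary variable $P=(0,y,z)$, with the extra phase $\phi(x_Q,y-y_Q,z-z_Q,1)$ and its degenerate critical point $y_c=y_Q+\arccos(1/r)$; the non-degeneracy estimates there rely on $\sqrt{r^2-1}\ge 4/5$. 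None of that structure is present in $I_j$: here the observation point sits at $(x_Q,y_Q,z_Q)$ with small $x_Q$, the integral is only in $(\alpha,\gamma)$, and the phase is $\tau\bigl(y_Q\alpha+z_Q\gamma-\sqrt{1-\gamma^2}\,\Gamma_0(\tilde\alpha,s)\bigr)\mp\tfrac23\tau(-\zeta)^{3/2}(x_Q,\alpha,\gamma)-\tfrac43\tau(-\zeta_0)^{3/2}(\alpha,\gamma)$. There is no $\tilde y$-integral to change to the Airy variable $\sigma$, no $y_c$, and no $\phi(x_Q,\cdot,\cdot,1)$; the relevant case distinction is instead on the sizes of $\tau^{2/3}\zeta(x_Q)$ and $\tau^{2/3}\zeta_0$, using the smallness of $x_Q$ to compare them (this is precisely where the hypothesis $d(Q,\partial\Omega)\le\sqrt{2}-1$ enters, in the opposite direction from what Section \ref{sectdispcyl} used).

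Two further technical points. First, the identity you wrote is off by constants: $A(w)=e^{\ii\pi/3}A_+(w)+e^{-\ii\pi/3}A_-(w)$, so $A/A_+=e^{\ii\pi/3}+e^{-\ii\pi/3}A_-/A_+$, not $1+A_-/A_+$; this is cosmetic, but it matters when you then assert that the first piece ``is exactly'' the free incoming wave. It is not: the first piece still carries $A_+(\tau^{2/3}\zeta)$ evaluated at $x=x_Q$, whereas $w_{j,gl}=T_{j,\tau}(F_{j,\tau})$ carries the full Airy function $A(\tau^{2/3}\zeta)$; these differ by the second exponential branch of $A$, and the estimate of that piece is not a consequence of the free-space dispersive bound — it must be done by stationary phase on the single-phase $A_+$ asymptotics, exactly like the $A_-/A_+$ piece. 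Second, if one bounds $|I_j^{ref}|\lesssim 1/t$ ``uniformly in $j\le j(s,h)$'' and then sums over $j$, one gets $j(s,h)/t\sim\log(s/h)/t$, not $C/t$; you need the sharper $j$-dependent bound (from the $2^{-j}$ weights in $\psi_j$, the Jacobian $d\gamma/d\varphi$, and $\hat F_{j,\tau}$) at the level of each $I_j$, not as an afterthought, so the two statements in your last paragraph are mutually inconsistent as written.
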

The Lemma follows exactly as in \cite[Lemma 3.25]{IL} for all $j\leq j(s,h)$ as the observation point $Q$ is located near a glancing point of the boundary (notice that, in the case $Q$ far from $\partial\Omega$, the geometry of the obstacle was important and the approach to obtain dispersive bounds in the case of the exterior of the cylinder was different from the one in the exterior of a ball; when $Q$ is near $\partial \Omega$ the same arguments hold in both cases so we do not reproduce the proof here. Moreover, all stationary arguments hold for $j\leq j(s,h)$).

\subsection{Dispersive bounds for the "non-glancing" part, $d(Q_0,\partial\Omega)\geq \sqrt{2}-1$}
Let $s\geq \sqrt{2}-1$ as before.

We let $u^{\#}_{j,he,h}(Q,Q_0,t):=\int_{\partial\Omega}\frac{\partial_xu^+_{j,he,h}(P,Q_0,t-|Q-P|)}{4\pi |Q-P|}d\sigma(P)$, where $\partial_x u^+_{j,he}|_{\partial\Omega}$ has been defined in \eqref{partxu1he2-2j}. 
\begin{prop}\label{prophypell}
There exists $C=C(\varepsilon)>0$ such that for all $t>h$, the following holds uniformly with respect to $Q,Q_0$ such that $s\geq r\geq \sqrt{2}$ (where $s=1+x_{Q_0}$, $r=1+x_Q$) : 
\[
\sum_{j=0}^{j(s,h)} |u^{\#}_{j,he,h}(Q,Q_0,t)|\leq \frac{C}{h^2 t}.
\]
\end{prop}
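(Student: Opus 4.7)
The plan is to insert the boundary expression \eqref{partxu1he2-2j} into the Kirchhoff representation \eqref{Gbarform} adapted to $u^{\#}_{j,he,h}$, exchange the orders of integration, and treat the resulting oscillatory integral by stationary phase in the tangential coordinates $(y,z)$ followed by a stationary/nonstationary split in $\tau$. Setting $P = (0,y,z) \in \partial\Omega$ and $L(y,z) := |Q-P| + \phi(0,y,z,s)$, I obtain
\begin{equation*}
u^{\#}_{j,he,h}(Q,Q_0,t) = 2^{-j}\int d\tau\,\chi(h\tau)\,e^{\ii t\tau}\,\tau^{2}\,J_{j}(Q,Q_0,\tau),
\end{equation*}
where
\begin{equation*}
J_{j}(Q,Q_0,\tau) := \int \frac{\sigma_{j,he}(y,z,s,\tau)}{4\pi\,|Q-P|\,\phi(0,y,z,s)}\,e^{-\ii\tau L(y,z)}\,dy\,dz.
\end{equation*}

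The stationary points of $L$ in $(y,z)$ are the specular reflection points $P_{c} = (0,y_{c},z_{c})$ at which the incident and reflected rays satisfy Snell's law on the cylindrical boundary; uniqueness and localization of such a point inside the support of $\sigma_{j,he}$ follow from the constraints $s|\sin y_{*}-\sin y|\geq c(\varepsilon)\phi$ and $2^{-j}(-z)\sim\phi(0,y,0,s)$ together with the geometry of the broken ray $Q_0\to P_{c}\to Q$. The transversality from Remark \ref{rmqtransv}, namely $|\partial_{x}\phi|_{x=0}|\geq \sqrt{c(\varepsilon)}$, which is the cosine of the incidence angle bounded away from $0$, guarantees the non-degeneracy of the Hessian. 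A direct calculation (with the flat-tangent leading term and bounded curvature corrections from the unit cylinder) gives, setting $L_{1}=|Q-P_{c}|$, $L_{2}=\phi(0,y_{c},z_{c},s)$, $L_{R}=L_{1}+L_{2}$, the quantitative lower bound $|\det H(P_{c})|\gtrsim c(\varepsilon)\,L_{R}^{2}/(L_{1}L_{2})^{2}$. Standard 3D stationary phase then yields, uniformly on the support,
\begin{equation*}
J_{j}(Q,Q_0,\tau) = \tau^{-1}\,a_{j}(Q,Q_0,\tau)\,L_{R}^{-1}\,e^{-\ii\tau L_{R}} + O_{N}((\tau L_{R})^{-N})\quad\forall\,N\geq 1,
\end{equation*}
with $a_{j}$ a classical symbol of order $0$ in $\tau$ bounded uniformly in $j$.

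Substituting into the $\tau$-integral produces
\begin{equation*}
u^{\#}_{j,he,h}(Q,Q_0,t) = \frac{2^{-j}}{L_{R}}\int d\tau\,\chi(h\tau)\,\tau\,a_{j}(Q,Q_0,\tau)\,e^{\ii\tau(t-L_{R})} + O(h^{N}/t^{N}).
\end{equation*}
On the region $|t-L_{R}|\lesssim h$, the $\tau$-integral is bounded by $O(h^{-2})$ and, since the stationary point lies on the geometric-optics ray and finite speed of propagation forces $L_{R}\sim t$, we deduce $|u^{\#}_{j,he,h}(Q,Q_0,t)|\lesssim 2^{-j}/(h^{2}t)$. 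On the complementary region $|t-L_{R}|\gtrsim h$, repeated integrations by parts in $\tau$ using $\chi(h\tau)$ yield arbitrary polynomial decay in $h/|t-L_{R}|$ and a harmless $O(h^{N}/t^{N})$ contribution for every $N$. Summing the pointwise bound $2^{-j}/(h^{2}t)$ over $0\leq j\leq j(s,h)$ collapses into a geometric series $\sum_{j\geq 0}2^{-j}\leq 2$, giving the global bound $C(\varepsilon)/(h^{2}t)$.

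The main obstacle is the uniform non-degeneracy of the Hessian of $L$ across the full range $0 \leq j \leq j(s,h)$: for $j$ close to $j(s,h)$ the support of $\sigma_{j,he}$ concentrates on rays nearly parallel to the $z$-axis (i.e.~$|\vartheta/\tau|$ close to $1$) and $|z_{c}|$ grows like $2^{j}\phi(0,y_{c},0,s)$, so naive estimates in $z$ degenerate. The decisive point is that the cylinder is translation-invariant in $z$, so the $z$-component of momentum is conserved under reflection: after the rescaling $\tilde z = 2^{-j}(-z)\sim\phi(0,y,0,s)$, analogous to the one producing \eqref{dew1henotint}, the stationary phase in $z$ reduces to a one-dimensional free-space integration while the transverse non-degeneracy is that of reflection off the unit circle in the $(x_{1},x_{2})$-plane. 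On the latter the incidence angle is bounded away from $\pi/2$ by Remark \ref{rmqtransv}, and one obtains the Hessian bound $|\det H|\gtrsim c(\varepsilon)L_{R}^{2}/(L_{1}L_{2})^{2}$ uniformly in $j$; once this factorization is in place the remaining $\tau$-integration and the summation over $j$ are routine.
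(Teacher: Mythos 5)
Your reduction to the boundary oscillatory integral and the general plan (stationary phase in $(y,z)$, then a stationary/nonstationary split in $\tau$, then sum the $2^{-j}$ factors) are structurally the same as the paper's. The genuine gap is in the claimed Hessian lower bound $|\det H(P_c)|\gtrsim c(\varepsilon)L_R^2/(L_1L_2)^2$ ``by direct calculation with the flat-tangent leading term and bounded curvature corrections.'' The critical points of $\Phi=|Q-P|+|P-Q_0|$ on $\partial\Omega$ are not all specular reflection points. The critical-point equations force $\frac{s\cos\theta-1}{\tilde\phi_1}=\pm\frac{r\cos(\theta-\theta_Q)-1}{\tilde\phi_2}$, which the paper splits into three families (one factor illuminated/one shadowed, both illuminated, both in shadow). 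Your transversality-implies-nondegeneracy heuristic is correct only in the first two families, where the Hessian determinant is essentially $\bigl(\frac{1}{\tilde\phi_1}+\frac{1}{\tilde\phi_2}\bigr)^2\cdot\frac{(s\cos\theta-1)^2}{\tilde\phi_1^2}$ as in \eqref{detHessmattrans}. In the shadow family ($s\cos\theta<1$ and $r\cos(\theta-\theta_Q)<1$, a ``virtual'' reflection whose chord passes through the obstacle but which still sits on the integration domain), the curvature contribution to $\partial^2_{\theta\theta}\Phi$ has the same order and the opposite sign as the flat-tangent term, so it is not a bounded perturbation. That $\det H$ nevertheless does not degenerate relies on the exact algebraic identity $\frac{\psi_i^2}{\tilde\phi_i^2}-\frac{(1-a_i\cos\cdot)}{\tilde\phi_i^2}=\frac{a_i(a_i-\cos\cdot)}{\tilde\phi_i^2}$ together with $s,r\geq\sqrt{2}$ to ensure $a_i(a_i-1)\gtrsim 1$, which is precisely what the un-numbered Lemma following \eqref{detHessmattrans2} delivers. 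Your proof would miss this case entirely if one follows the stated geometric-optics intuition, or at best assert the conclusion without justification.

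A secondary omission: you apply ``standard 3D stationary phase'' with an $O_N((\tau L_R)^{-N})$ error, but stationary phase requires the large parameter $\tau\sqrt{|\det H|}\sim \tau\bigl(\frac{1}{\tilde\phi_1}+\frac{1}{\tilde\phi_2}\bigr)$ to be large. When $Q,Q_0$ are both far from the boundary this parameter can be $O(1)$, and stationary phase is not available; the paper has separate lemmas (e.g.\ the one immediately following Lemma~\ref{lemphasstattrans}) that instead estimate the integral directly by the size of the region where the phase is slowly varying, and your argument does not cover this regime. Finally, your summation over $j$ via $\sum 2^{-j}$ is fine in principle, but is a slightly different route from the paper's observation that the $\tau$-phase is only stationary when $t\sim|z|\sim 2^j s$, so that only a bounded number of $j$ contribute nontrivially; both work, but the paper's argument sidesteps bookkeeping of the $2^{-j}$ factor through the stationary-phase constants.
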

\begin{proof}
Using \eqref{partxu1he2-2j}, it follows that the phase function of $u^{\#}_{j,he,h}(Q,Q_0,t)$ is $\tau(t-\Phi)$ where $\Phi:=|Q-P|+|P-Q_0|$ and the symbol is $\frac{\tau^2}{|P-Q| |P-Q_0|}\sigma_{j,he}(y,z,s,\tau)$ with $\sigma_{j,he}$ a classical symbol of order $0$ with respect to $\tau$ supported for $P$ with coordinates $(x,y,z)_P=(0,y,z)$ such that $s(\sin y_*-\sin y)\geq c(\varepsilon)|P-Q_0|$ and $2^{-j}(-z)\sim \phi(0,y,0,s)$.
In the following it will be convenient to work with the coordinates $(r,\theta,z)$ (instead of $(x,y,z)$). Recall that we set $r=1+x$, $\theta=\frac{\pi}{2}-y$. In these coordinates, the support conditions for $\sigma_{j,he}$ become $s(\cos\theta_*-\cos \theta)\geq c(\varepsilon)|P-Q_0|$, $\theta_*=\arccos(1/s)$.
We compute the derivative of the phase $\Phi$, where
\[
\Phi:=|Q-P|+|Q_0-P|=\tilde\phi(1,\theta-\theta_Q,r_Q,z-z_Q)+\tilde\phi(1,\theta,s,z),
\]
where now $P=(\cos \theta,\sin\theta,z)\in\mathbb{R}^3$, $Q_0=(s,0,0)$ and $Q=(r_Q\cos \theta_Q,r_Q\sin \theta_Q,z_Q)$ and where $\tilde\phi$ is defined in \eqref{Phi}. Let $r=r_Q$. The critical points satisfy $\partial_{\theta}\Phi=\partial_z\Phi=0$, which is equivalent to
\begin{equation}\label{systzvarphitrans}
\left\{\begin{aligned}
&\frac{z}{\tilde\phi(1,\theta,s,z)}+\frac{z-z_Q}{\tilde\phi(1,\theta-\theta_Q,r,z-z_Q)}=0,\\
&\frac{s\sin\theta}{\tilde\phi(1,\theta,s,z)}+\frac{r\sin(\theta-\theta_Q)}{\tilde\phi(1,\theta-\theta_Q,r,z-z_Q)}=0.
\end{aligned}\right.
\end{equation}
We aim at applying the stationary phase with respect to both $\theta$ and $z$. We evaluate the second order derivatives of $\Phi$ at $\nabla_{\theta,z}\Phi=0$. The second order derivative of $\Phi$ satisfies
\begin{equation}\label{derivzztrans}
\partial^2_{z,z}\Phi|_{\partial_{z}\Phi=0}=\Big(\frac{1}{\tilde\phi(1,\theta,s,z)}+\frac{1}{\tilde\phi(1,\theta-\theta_Q,r,z-z_Q)}\Big)\Big(1-\frac{z^2}{\tilde\phi^2(1,\theta,s,z)}\Big).
\end{equation}
Next, as $\partial^2_{\theta,z}\Phi=-\Big(\frac{zs\sin\theta}{\tilde\phi^3(1,\theta,s,z)}+\frac{(z-z_Q)r\sin(\theta-\theta_Q)}{\tilde\phi^3(1,\theta-\theta_Q,r,z-z_Q)}\Big)$, we obtain, using the system \eqref{systzvarphitrans},
\begin{equation}\label{derivzvartrans}
\partial^2_{\theta,z}\Phi|_{\nabla_{\theta,z}\Phi=0}=-\Big(\frac{1}{\tilde\phi(1,\theta,s,z)}+\frac{1}{\tilde\phi(1,\theta-\theta_Q,r,z-z_Q)}\Big)\frac{zs\sin\theta}{\tilde\phi^2(1,\theta,s,z)}.
\end{equation}
Finally, we compute
\begin{equation}\label{secondderiv}
\partial^2_{\theta,\theta}\Phi=\frac{s\cos\theta}{\tilde\phi(1,\theta,s,z)}-\frac{s^2\sin^2\theta}{\tilde\phi^3(1,\theta,s,z)}+\frac{r\cos(\theta-\theta_Q)}{\tilde\phi(1,\theta-\theta_Q,r,z-z_Q)}-\frac{r^2\sin^2(\theta-\theta_Q)}{\tilde\phi^3(1,\theta-\theta_Q,r,z-z_Q)}.
\end{equation}
To evaluate $\partial^2_{\theta,\theta}\Phi|_{\nabla_{z,\theta}\Phi=0}$ we need a refined analysis of the critical points. Using both equations in \eqref{systzvarphitrans} gives $\frac{s\sin\theta}{\psi(s,\theta)}=-\frac{r\sin(\theta-\theta_Q)}{\psi(r,\theta-\theta_Q)}$, where $\psi(s,\theta)=\sqrt{1-2s\cos\theta+s^2}$, and hence $\theta\in [\theta_Q-\pi,\theta_Q]$. Taking the square  in the last equality in \eqref{systzvarphitrans}, subtracting $1$ and then using the first in \eqref{systzvarphitrans}
yields $\frac{s\cos\theta-1}{\tilde\phi(1,\theta,s,z)}=\pm \frac{(r\cos(\theta-\theta_Q)-1)}{\tilde\phi(1,\theta-\theta_Q,r,z-z_Q)}$. Depending on the sign, we separate three situations : \\

\paragraph{\bf Different signs} Consider first the case $\frac{s\cos\theta-1}{\tilde\phi(1,\theta,s,z)}=- \frac{(r\cos(\theta-\theta_Q)-1)}{\tilde\phi(1,\theta-\theta_Q,r,z-z_Q)}$ when
 \[
 \frac{s\cos\theta}{\tilde\phi(1,\theta,s,z)}+\frac{r\cos(\theta-\theta_Q)}{\tilde\phi(1,\theta-\theta_Q,r,z-z_Q)}=\frac{1}{\tilde\phi(1,\theta,s,z)}+\frac{1}{\tilde\phi(1,\theta-\theta_Q,r,z-z_Q)}.
 \] 
We find  
\begin{equation}\label{derivvarvartrans}
\partial^2_{\theta,\theta}\Phi|_{\nabla_{\theta,z}\Phi=0}=\Big(\frac{1}{\tilde\phi(1,\theta,s,z)}+\frac{1}{\phi(1,\theta-\theta_Q,r,z-z_Q)}\Big)\Big(1-\frac{s^2\sin^2\theta}{\tilde\phi^2(1,\theta,s,z)}\Big).
\end{equation}
Using \eqref{derivzztrans}, \eqref{derivvarvartrans}, \eqref{derivzvartrans}, the determinant of the Hessian matrix equals
\begin{equation}\label{detHessmattrans}
\Big(\frac{1}{\tilde\phi(1,\theta,s,z)}+\frac{1}{\tilde\phi(1,\theta-\theta_Q,r,z-z_Q)}\Big)^2\times \frac{(s\cos\theta-1)^2}{\tilde\phi^2(1,\theta,s,z)}|_{\nabla_{\theta,z}\Phi=0}
\end{equation}
and on the support of the symbol $\sigma_{1,he}$ the second factor in \eqref{detHessmattrans} takes values in $[c^2(\varepsilon_1),1]$.
The unique critical point w.r.t. $z$ reads as $z_c=z_Q\times \frac{\psi(s,\theta)}{\psi(s,\theta)+\psi(r,\theta-\theta_Q)}$.
\begin{lemma}\label{lemphasstattrans}
When $\tau\times \Big(\frac{1}{\tilde\phi(1,\theta,s,z)}+\frac{1}{\tilde\phi(1,\theta-\theta_Q,r,z-z_Q)}\Big)\geq M$ for some $M>1$ large enough, the usual stationary phase applies for $\theta,z$ near the critical points and yields $|\mathcal{F}(u^{\#}_{j,he,h})(Q,Q_0,\tau)|\lesssim \frac{\tau}{t}$
when $t\sim \tilde\phi(1,\theta_c-\theta_Q,r,z_c-z_Q)+\tilde\phi(1,\theta_c,s,z_c)$. For $z,\theta$ outside a fixed neighborhood of the critical points the previous estimate still holds.
\end{lemma}
\begin{proof}
We let $j=0$ for simplicity. When $\tau\times \Big(\frac{1}{\tilde\phi(1,\theta,s,z)}+\frac{1}{\tilde\phi(1,\theta-\theta_Q,r,z-z_Q)}\Big)\geq \tau^{\epsilon}$ for some $\epsilon>0$, the stationary phase obviously applies with large parameter $\gtrsim \tau^{\epsilon}$ : then $\mathcal{F}(u^{\#}_{0,he,h})(Q,Q_0,\tau)$ takes the form
\begin{multline}\label{estimF}
\mathcal{F}(u^{\#}_{0,he,h})(Q,Q_0,\tau)=\frac{\tau^{2-1} e^{i\tau(t-\Phi)_{z_c,\theta_c}} \tilde\sigma_{0,he}(\theta,z,s,\tau)}{\tilde\phi(1,\theta-\theta_Q,r,z-z_Q)\tilde\phi(1,\theta,s,z)} \Big(\frac{1}{\tilde\phi(1,\theta,s,z)}+\frac{1}{\tilde\phi(1,\theta-\theta_Q,r,z-z_Q)}\Big)^{-1} \Big|_{z_c,\theta_c} \\
+O\big(\frac{\tau^{-\infty}}{\tilde\phi(1,\theta_c-\theta_Q,r,z_c-z_Q)+\tilde\phi(1,\theta_c,s_c,z)}\big)
\end{multline}
for some new symbol $\tilde\sigma_{0,he}$ which reads as an asymptotic expansion with main contribution $\sigma_{j,he}$ and small parameter $\lesssim \tau^{-\epsilon'}$. As the main contribution of $\mathcal{F}(u^{\#}_{0,he,h})(Q,Q_0,\tau)$ can be bounded by
$\frac{\tau\tilde\sigma_{0,he}(\theta,z,s,\tau)}{\tilde\phi(1,\theta-\theta_Q,r,z-z_Q)+\tilde\phi(1,\theta,s,z)}$,
for $t\sim \tilde\phi(1,\theta_c-\theta_Q,r,z_c-z_Q)+\tilde\phi(1,\theta_c,s,z_c)$ this allows to conclude using the integration w.r.t. $\tau$. For $t$ that doesn't satisfy this condition we conclude by integrations by parts, finite speed of propagation and support properties of the symbol. If we replace $\tau^{\epsilon'}$ by some large constant $M$, the main contribution of $\mathcal{F}(u^{\#}_{0,he,h})(Q,Q_0,\tau)$ can be bounded in the same way, but we need to bound the remainder terms as follows
\[
\frac{\tau^2}{\tilde\phi(1,\theta-\theta_Q,r,z-z_Q)\tilde\phi(1,\theta,s,z)}\times \tau^{-1}\Big(\frac{1}{\tilde\phi(1,\theta,s,z)}+\frac{1}{\tilde\phi(1,\theta-\theta_Q,r,z-z_Q)}\Big)M^{-N} 
\]
for all $N\geq 1$, which is enough to conclude. For $j\leq j(s,h)$ we conclude in the same way.\\

Let now $z,\theta$ outside a fixed neighborhood of the critical points. If moreover $|z|\geq 2t$, the phase $\tau(t-\Phi)$ is not stationary w.r.t. $\tau$; let $|z|\leq 2t$ such that $|\frac{z}{z_c}-1|\geq  c$ for some fixed constant $c>0$.
If $\tau\frac{|z_Q|}{\tilde\phi(1,\theta-\theta_Q,r,z-z_Q)}\geq M_1$ for some large $M_1>1$, then we make repeated integrations by parts as $\tau\partial_z\Phi= 
\frac{z_Q}{\tilde\phi_2(1,\theta-\theta_Q,r,z-z_Q)}(\frac{z}{z_c}-1)$. Let $\tau\frac{|z_Q|}{\tilde\phi(1,\theta-\theta_Q,r,z-z_Q)}< M_1$. As $\tau\partial_z\Phi= \tau\Big(\frac{1}{\tilde\phi(1,\theta,s,z)}+\frac{1}{\tilde\phi(1,\theta-\theta_Q,r,z-z_Q)}\Big)z-\tau 
\frac{z_Q}{\tilde\phi(1,\theta-\theta_Q,r,z-z_Q)}$, then if $\tau \partial_z\Phi\geq M_2$ for some large constant $M_2>1$, repeated integrations by parts allow to conclude;
if, instead, $\tau\partial_z\Phi\leq M_2$ then 
\[
|z|\leq \Big(\frac{M_2}{\tau}+\frac{|z_Q|}{\tilde\phi(1,\theta-\theta_Q,r,z-z_Q)}\Big)
\frac{1}{(\frac{1}{\tilde\phi(1,\theta,s,z)}+\frac{1}{\tilde\phi(1,\theta-\theta_Q,r,z-z_Q)})}
\]
and we directly obtain, using the size of the support of the integrand $(z,\theta)$ (with $\theta$ bounded)
\[
|\mathcal{F}(u^{\#}_{0,he,h})(Q,Q_0,\tau)|\lesssim \frac{\tau^2}{\tilde\phi_1\tilde\phi_2}\frac{M_1+M_2}{\tau}\times \frac{\tilde\phi_1\tilde\phi_2}{\tilde\phi_1+\tilde\phi_2}\lesssim \frac 1t,
\]
where $\tilde\phi_1=\tilde\phi(1,\theta,s,z)$ and $\tilde\phi_2=\tilde\phi(1,\theta-\theta_Q,r,z-z_Q)$. Similar arguments hold for all $j\leq j(s,h)$.
\end{proof}
\begin{lemma}
When $\tau\times \Big(\frac{1}{\tilde\phi(1,\theta,s,z)}+\frac{1}{\tilde\phi(1,\theta-\theta_Q,r,z-z_Q)}\Big)\leq M$ estimate \eqref{estimF} still holds for $t\sim \tilde\phi(1,\theta_c-\theta_Q,r,z_c-z_Q)+\tilde\phi(1,\theta_c,s,z_c)$.
\end{lemma}
\begin{proof}
For $|z/z_c-1|\geq c$ we may proceed as in the second part of the proof of the previous lemma. Let therefore $z/z_c\in [1/4,4]$ and make the change of variables $z=z_c\Xi$. Then
\[
\tau\partial_\Xi\Phi=\tau z_c\partial_z\Phi|_{z=z_c\Xi}=\tau z_c\times \frac{z_Q}{\tilde\phi_2}(\Xi-1)=\tau z_Q^2\frac{\tilde\phi_1}{\tilde\phi_2}\frac{1}{\tilde\phi_1+\tilde\phi_2}(\Xi-1).
\] 
Using \eqref{derivzztrans}, we obtain 
$\tau\partial^2_\Xi\Phi|_{\Xi=1}=\tau z_c^2\partial^2_z\Phi|_{z=z_c\Xi}=\tau (\frac{1}{\tilde\phi_1}+\frac{1}{\tilde\phi_2}) z_c^2\frac{\psi_1^2}{\tilde\phi_1^2}$,
where, from the support properties of the symbol, the last factor is bounded from below by a fixed constant. If $\tau (\frac{1}{\tilde\phi_1}+\frac{1}{\tilde\phi_2}) z_c^2\geq M$, we apply the stationary phase near $\Xi=1$ only with respect to $\Xi$ (and not with $\theta$) as in the previous lemma and, using that $\theta$ belongs to a compact set, we find the following uniform bound 
\begin{equation}
|\mathcal{F}(u^{\#}_{0.he,h})(Q,Q_0,\tau)|\lesssim \frac{\tau^2}{\tilde\phi_1\tilde\phi_2}\times z_c\times \tau^{-1/2}z_c^{-1} (\frac{1}{\tilde\phi_1}+\frac{1}{\tilde\phi_2})^{-1/2}\lesssim \frac{\tau}{\tilde\phi_1+\tilde\phi_1}\times \Big(\tau (\frac{1}{\tilde\phi_1}+\frac{1}{\tilde\phi_2})\Big)^{1/2}
\end{equation}
and we conclude using the hypothesis $\tau\times \Big(\frac{1}{\tilde\phi(1,\theta,s,z)}+\frac{1}{\tilde\phi(1,\theta-\theta_Q,r,z-z_Q)}\Big)\leq M$. Same for all $j\leq j(s,h)$.
\end{proof}

\paragraph{\bf Same sign, $P$ in the illuminated regime of $Q_0,Q$} Consider now the situation $\frac{s\cos\theta-1}{\psi(s,\theta)}= \frac{r\cos(\theta-\theta_Q)-1}{\psi(r,\theta-\theta_Q)}$.
The formula \eqref{derivzztrans} remains unchanged and $\partial^2_{z,z}\Phi$ is strictly positive. 
Moreover, from the support condition of $\sigma_{1,he}$ we have $s\cos\theta>1$ then $r\cos(\theta-\theta_Q)>1$ and in \eqref{secondderiv} we obtain a lower bound for the sum of the first and third terms at the critical points as follows :
\[
\frac{s\cos\theta}{\tilde\phi(1,\theta,s,z)}+\frac{r\cos(\theta-\theta_Q)}{\tilde\phi(1,\theta-\theta_Q,r,z-z_Q)}\geq \frac{1}{{\tilde \phi(1,\theta,s,z)}}+\frac{1}{{\tilde\phi(1,\theta-\theta_Q,r,z-z_Q)}}
\]
and we can proceed exactly as in the previous case. 
\begin{rmq}
Notice that the positivity condition $s\cos\theta>1$ is equivalent to $\cos\theta>\cos\theta_*$, where $\theta_*=\arccos(1/s)$, which in turn assures that the point $P$ belongs to the illuminated region of $Q$ (as $\theta<\theta_*$). When both conditions hold ($\cos\theta>1/s$ and $\cos(\theta-\theta_Q)>1/r$), the point $P$ belongs to the illuminated regions from $Q_0$ and $Q$.
In fact, the line $Q_0Q$ is tangent to the boundary when $\arccos(1/s)+\arccos(1/r)=\theta_Q$ : if $P\in \partial\Omega$ is such that the cosine of the angle between $QO$ and $OP$ is larger than $1/r$, then the point $Q$ belongs to the illuminated regime of $Q_0$. As such, the previous case when $\pm (s\cos\theta-1)>0$ and $\pm(1-r\cos(\theta-\theta_Q))>0$ corresponds to points $P$ which belong to the illuminated regime of only one of the two points $Q_0$ and $Q$. In the last case $s\cos\theta-1<0$ and $1-r\cos(\theta-\theta_Q)<0$ that will be dealt with in the remaining of this section, $P$ does not belong to the illuminated regions of $Q_0,Q$. 
\end{rmq}

\paragraph{\bf Same sign, $P$ in the shadow regime of $Q_0,Q$} In this case we do not have a lower bound for the determinant of the Hessian matrix as before. Replacing $\frac{r\cos(\theta-\theta_Q)}{\tilde\phi_2}=-\frac{1}{\tilde\phi_1}+\frac{1}{\tilde\phi_2}+\frac{s\sin\theta}{\tilde\phi_1}$ in the expression \eqref{secondderiv} yields the following form for the determinant of the Hessian matrix at this critical point :
\begin{equation}\label{detHessmattrans2}
\Big(\frac{1}{\tilde\phi_1}+\frac{1}{\tilde\phi_2}\Big)\times \frac{(1-s\cos\theta)}{\tilde\phi_1}\times\Big|\Big(\frac{1}{\tilde\phi_1}+\frac{1}{\tilde\phi_2}\Big) \frac{(1-s\cos\theta)}{\tilde\phi_1}-2\frac{\psi_1^2}{\tilde\phi_1^2}\Big||_{\nabla_{\theta,z}\Phi=0}.
\end{equation}
\begin{lemma} 
At $\nabla_{\theta,z}\Phi=0$ the following holds
\[
\frac{\psi_2^2}{\tilde\phi_2^2}-\frac{1}{\tilde\phi_2} \frac{(1-r\cos(\theta-\theta_Q))}{\tilde\phi_2}=\frac{r(r-\cos(\theta-\theta_Q))}{\tilde\phi_2^2}\geq \frac{r(r-1)}{\tilde\phi_2^2}.
\]
\[
\frac{\psi_1^2}{\tilde\phi_1^2}-\frac{1}{\tilde\phi_1} \frac{(1-s\cos\theta)}{\tilde\phi_1}=\frac{s(s-\cos\theta)}{\tilde\phi_2^2}\geq \frac{s(s-1)}{\tilde\phi_1^2}.
\]
\end{lemma}
The lemma is a direct computation. Taking the sum of the terms in the left hand side and using that  $\tilde\phi_j=\psi_j\sqrt{1+\frac{z_Q^2}{(\psi_1+\psi_2)^2}} $ for $j\in\{1,2\}$ and
$ \frac{(1-s\cos\theta)}{\tilde\phi_1}= \frac{(1-r\cos(\theta-\theta_Q))}{\tilde\phi_2}$ yields, at $\nabla_{\theta,z}\Phi=0$,
\[
\Big(2\frac{\psi_1^2}{\tilde\phi_1^2}-\Big(\frac{1}{\tilde\phi_1}+\frac{1}{\tilde\phi_2}\Big) \frac{(1-s\cos\theta)}{\tilde\phi_1}\Big)
\geq  \frac{s(s-1)}{\tilde\phi_1^2}+\frac{r(r-1)}{\tilde\phi_2^2},
\]
which further induces the following lower bound for the determinant of the Hessian matrix 
\[
\Big(\frac{1}{\tilde\phi_1}+\frac{1}{\tilde\phi_2}\Big)\times \frac{(1-s\cos\theta)}{\tilde\phi_1}\times \Big( \frac{s(s-1)}{\tilde\phi_1^2}+\frac{r(r-1)}{\tilde\phi_2^2}\Big).
\]
From now on we may proceed as in the proof of the first case, applying the stationary phase when this determinant is sufficiently large and obtaining bounds using the size of the intervals of integration when the stationary phase fails to apply. Thus, we obtain the equivalent of Lemma \ref{lemphasstattrans}
\begin{lemma}
When $\tau\times \Big(\frac{1}{\tilde\phi(1,\theta,s,z)}+\frac{1}{\tilde\phi(1,\theta-\theta_Q,r,z-z_Q)}\Big)^{1/2}\Big( \frac{s(s-1)}{\tilde\phi_1^2}+\frac{r(r-1)}{\tilde\phi_2^2}\Big)^{1/2}\geq M$ for some large $M>1$, the usual stationary phase applies for $\theta,z$ near the critical points and yields
$|\mathcal{F}(u^{\#}_{0,he,h})(Q,Q_0,\tau)|\lesssim \frac{\tau}{t}$
for $t\sim \tilde\phi(1,\theta_c-\theta_Q,r,z_c-z_Q)+\tilde\phi(1,\theta_c,s,z_c)$. For $z,\theta$ outside a fixed neighborhood of the critical points the previous estimate still holds.
\end{lemma}
\begin{proof}
The main contribution of $\mathcal{F}(u^{\#}_{0,he,h})(Q,Q_0,\tau)$ after applying the stationary phase is bounded by
\[
\frac{\tau^2}{\tilde\phi_1\tilde\phi_2}\times \tau^{-1}\Big(\frac{1}{\tilde\phi_1}+\frac{1}{\tilde\phi_2}\Big)^{-1/2}\Big(\frac{s(s-1)}{\tilde\phi_1^2}+\frac{r(r-1)}{\tilde\phi_2^2}\Big)^{-1/2}\lesssim \frac{\tau}{\tilde\phi_1+\tilde\phi_2}\times \sqrt{\frac{1}{\tilde\phi_1}+\frac{1}{\tilde\phi_2}}\Big(\frac{\tilde\phi_1}{4s}+\frac{\tilde\phi_2}{4r}\Big).
\]
On the support of $\sigma_{1,he}$ we obtain the desired estimates. We let the other situations to the reader.
\end{proof}
When $1\leq j\leq j(s,h)$ the phase is the same, only the symbol comes with non positive powers of $2^{j}$ : to sum them up, notice that the phase is stationary in $\tau$ only when $t\sim |z|\sim 2^js$, hence for a finite number of $j$.
\end{proof}

\section{High-frequency case. Parametrix and dispersive estimates for $d(Q,\partial\Omega)<\sqrt{2}-1$ and $d(Q_0,\partial\Omega)< \sqrt{2}-1$, or for $d(Q,\partial\Omega)\geq \sqrt{2}-1$ and $\sqrt{1-\gamma^2}\sim 2^{-j}$ with $\tau 2^{-3j}d(Q,\partial\Omega)\lesssim 1$
}\label{secBesHan}
In this section both $Q$ and $Q_0$ are close to the boundary and $t>0$. For convenience, we will assume this time that $s\leq r\leq \sqrt{2}$.
Denote $\mathcal{R}(Q,Q_0,\tau)$ the outgoing solutions of the Helmholtz equation $(\tau^2+\Delta)w=\delta_{Q_0}$, $w_{|\partial\Omega}=0$ with $Q_0=(s,0,0)$ where we recall that, in cylindrical coordinates, $\Delta=\partial^2_r+\frac{\partial_r}{r}+\frac{\partial^2_{\theta}}{r^2}+\partial^2_z$. Then the solution of the wave equation with initial condition $(u_0,u_1)=(\delta_{Q_0},0)$ is given by
\begin{equation}
u(Q,Q_0,t)=\int_0^{\infty}e^{\ii t \tau} \mathcal{R}(Q,Q_0,\tau)\frac{d\tau}{\pi}.
\end{equation}
For a given $w_0(r,\theta,z)$, the solution to the \emph{inhomogeneous equation} $(\tau^2+\Delta)w=w_0$ reads as
\begin{equation*}
w(\tau,r,\theta,z)=\int_{\mathbb{R}}e^{\ii z\vartheta}\sum_{n\in\mathbb{Z}}e^{\ii n\theta}\int_{1}^{\infty}{G}_{n}(r,\tilde{r},\kappa(\vartheta,\tau))\tilde{r}^2\hat{w}_0(\tilde{r},n,\vartheta)d\tilde{r}d\vartheta,
\end{equation*}
where the kernel $G_n$ is symmetric w.r.t. $r,\tilde r$ and, for $r\geq \tilde r$, it is given by 
\begin{equation}\label{Gn}
\begin{aligned}
{G}_n(r,\tilde{r},\kappa)&=\frac{\pi}{2\ii}(r\tilde{r})^{-\frac12}\Big(J_n(\tilde{r}\kappa)-\frac{{J}_n(\kappa)}{H_n(\kappa)}H_n(\tilde{r}\kappa)\Big)H_n(r\kappa),\\
&=\frac{\pi}{4\ii}(r\tilde{r})^{-\frac12}\Big(\bar{H}_n(\tilde{r}\kappa)-\frac{\bar{H}_n(k)}{H_n(k)}H_n(\tilde{r}\kappa)\Big)H_n(r\kappa).
\end{aligned}
\end{equation}
Here $J_n(z)=\frac12(H_n(z)+\bar{H}_n(z))$ denotes the Bessel function and $\kappa(\vartheta,\tau):=\sqrt{\tau^2-\vartheta^2}$. As $n$ is an integer, $H_{-n}(z)=(-1)^nH_n(z)$, therefore $G_{n}=G_{-n}$.
Taking $w_0=\delta_{Q_0}$, $Q_0=(s,0,0)$ and $s\leq r$ yields $\tilde r=s$ and
\begin{equation*}
\mathcal{R}(Q,Q_0,\tau)=s^2\int_{\mathbb{R}}e^{\ii z\vartheta}\sum_{n\in\mathbb{Z}}e^{\ii n\theta}G_{|n|}(r,{s},\kappa(\vartheta,\tau))d\vartheta.
\end{equation*}
Let $\psi_0,\psi\in C^{\infty}_0$ as in Section \ref{sectWEOutSC} such that $\psi_0$ is equal to $1$ on $[1/81,1]$, and to $0$ on $[0, 1/100]$, $
\psi\in C^{\infty}_0(1/4,4)$ is equal to $1$ near $1$ is such that $1-\psi_0(\beta)=\sum_{j\geq1}\psi(2^{2j}\beta)$ and $0\leq \psi_0, \psi\leq 1$, and set
\[
\mathcal{R}_j(Q,Q_0,\tau)=s^2\int_{\mathbb{R}}e^{\ii z\vartheta}\sum_{n\in\mathbb{Z}}e^{\ii n\theta}\psi(2^{2j}(1-(\vartheta/\tau)^2))G_{|n|}(r,{s},\kappa(\vartheta,\tau))d\vartheta,
\]
for $j\geq 1$ ; for $j=0$, replace $\psi$ by $\psi_0(1-\gamma^2)$.
\begin{lemma}\label{lemI}
Fix $0<h_0<1$ small enough and let $h\leq h_0$. Let $\chi\in C^{\infty}_0(1/2,2)$ valued in $[0,1]$ and equal to $1$ on $[\frac34,\frac32]$. There exist a constant $C>0$ such that for all $1\leq s\leq r\leq \sqrt{2}$ and all $t>0$, we have 
\begin{equation}\label{estimI}
I(Q,Q_0,h):=\int_0^{\infty} e^{\ii t\tau}\chi(h\tau)\mathcal{R}(Q,Q_0,\tau) d\tau \leq\frac{C}{h^2t}.
\end{equation}
Moreover, for $j\geq j(r,h)$ with $j(r,h)$ defined in Definition \ref{defjjsh}, we have $\sum_{j\geq j(r,h)}I^j(Q,Q_0,\tau) \leq \frac{C}{h^2t}$, where
\begin{equation}\label{estimIj}
I^j(Q,Q_0,h):=\int_0^{\infty} e^{\ii t\tau}\chi(h\tau)\mathcal{R}_j(Q,Q_0,\tau) d\tau.
\end{equation}
\end{lemma}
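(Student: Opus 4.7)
The plan is to exploit the explicit Bessel--Hankel representation \eqref{Gn} of $G_n$ and reduce $I(Q,Q_0,h)$ to oscillatory integrals amenable to stationary phase. The first step is to apply Poisson's summation formula
\[
\sum_{n\in\mathbb{Z}} e^{\ii n\theta}\,f(n) \;=\; \sum_{N\in\mathbb{Z}}\int_{\mathbb{R}} e^{\ii n(\theta+2\pi N)}\,f(n)\,dn,
\]
which introduces $N$ as the number of wraps of a broken ray around the cylinder; by finite speed of propagation only a bounded range of $N$ contributes on any given time scale. I then split the $n$-integral according to the three regimes of $H_{|n|}(\kappa r)$: the oscillating regime $|n|/(\kappa r)<1-\varepsilon_2$, the Airy transition regime $|n|/\kappa\sim 1$, and the exponentially decaying regime $|n|/\kappa>1+\varepsilon_2$.

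In the decaying regime $H_{|n|}(\kappa r)$ is exponentially small while $|\bar H_n(\kappa)/H_n(\kappa)|\leq 1$, so this part contributes $O(h^\infty)$. In the oscillating regime, the Debye asymptotic expansion gives explicit phases for $H_n(\kappa r)$, $\bar H_n(\kappa s)$ and for the reflected term $H_n(\kappa s)\bar H_n(\kappa)/H_n(\kappa)$; after substitution the total phase in $(\tau,\vartheta,n,N)$ is precisely the length of a broken geodesic from $Q_0$ to $Q$, direct or reflected once on $\partial\Omega$, plus $N$ extra wraps. Stationary phase in $(\vartheta,n)$ followed by a non-stationary phase argument in $\tau$ (unless $t$ matches the effective ray length) yields the expected $h^{-2}/t$ bound, the $h^{-2}$ arising from the two-dimensional $(\vartheta,n)$-volume and the $1/t$ from the Jacobian at the stationary set. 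In the Airy transition regime, the uniform Langer asymptotics of $H_n, \bar H_n$ apply: rescaling $n=\kappa+\kappa^{1/3}\beta$, one finds that the ratio $\bar H_n(\kappa)/H_n(\kappa)$ becomes, up to smooth nowhere-vanishing factors, $A(\tau^{2/3}\zeta_0)/A_+(\tau^{2/3}\zeta_0)$, so that the analysis closely mirrors that of Proposition \ref{propglanc}.

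For the second statement, the cutoff $\psi(2^{2j}(1-(\vartheta/\tau)^2))$ localizes $\vartheta/\tau$ in an interval of size $\sim 2^{-2j}$ near $\pm 1$, hence $\kappa\sim 2^{-j}/h$ and $\kappa r\lesssim (r/h)^{2/3}$ as soon as $j\geq j(r,h)$. This is exactly the threshold beyond which the stationary-phase based parametrix of Sections \ref{sec:cylinder}--\ref{sectdispcyl} degenerates, which is why one falls back on the explicit Bessel--Hankel representation. The same three-regime splitting still applies, but each integral now carries an extra factor $2^{-2j}/h$ from the $\vartheta$-volume and gains a factor $2^{-j\delta}$ (for some $\delta>0$) from stationary phase applied with the rescaled symbols. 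Summing geometrically over $j\geq j(r,h)$ produces $\sum_{j\geq j(r,h)}|I^j(Q,Q_0,h)|\leq C h^{-2}/t$.

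The main obstacle is the Airy transition regime, corresponding to creeping rays tangent to or gliding along the cylinder: there the Debye expansion is useless and one has to control precisely the oscillatory integral carrying the diffractive ratio $A(\tau^{2/3}\zeta_0)/A_+(\tau^{2/3}\zeta_0)$. A secondary technical difficulty is the careful bookkeeping of the powers of $\kappa$, $(s^2-1)^{1/4}$, $(r^2-1)^{1/4}$ and $2^{-j}$ across the successive rescalings, together with checking that the $\tau$-integral is non-stationary (yielding $O(h^\infty)$) outside the narrow time window where $t$ is comparable to the length of a geometric broken ray linking $Q_0$ to $Q$.
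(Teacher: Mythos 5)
Your overall plan --- use the explicit Bessel--Hankel kernel \eqref{Gn}, split $n$ according to the three classical regimes of $H_n$ (oscillatory, Airy/transition, large order), and reduce to stationary phase --- is the same skeleton the paper uses. The Poisson-summation step, however, is a genuinely different route: the paper never converts $\sum_{n\in\ZZZ}e^{\ii n\theta}$ into $\sum_N\int dn$; it keeps $n$ discrete, bounds each $I^{\pm,n,j}_{\chi_*}$ individually, and controls the sum over $n$ using the constraint $n\sim 2^{-j}/h$ forced by $\chi(h\tau)\psi(2^{2j}(1-\gamma^2))\chi_*$, which makes the $n$-range finite of size $\lesssim 2^{-j}/h$. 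Your Poisson route trades the discrete $n$-sum for a continuum $n$-integral (amenable to stationary phase, and geometrically meaningful via winding numbers $N$) at the cost of having to resum over $N$; it is a legitimate alternative but not what the paper does.

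There is, however, a genuine gap in your treatment of the ``decaying'' regime $|n|/\kappa>1+\varepsilon_2$. You split $G_n=G^+_n-G^-_n$ as in \eqref{decompJ} and then claim that in this regime $I^{\pm}_{\chi_-}$ is $O(h^\infty)$ because $H_{|n|}(\kappa r)$ is exponentially small and $|\bar H_n(\kappa)/H_n(\kappa)|\leq 1$. This is false for the two pieces taken separately: when the order exceeds the argument, $\bar H_n$ and $H_n$ are \emph{exponentially large} (both are dominated by $Y_n$, cf.\ \eqref{larg_ord}), so $G^+_n=\frac{\pi}{4i\sqrt{rs}}\bar H_n(\kappa s)H_n(\kappa r)$ and $G^-_n=\frac{\pi}{4i\sqrt{rs}}\frac{\bar H_n(\kappa)}{H_n(\kappa)}H_n(\kappa s)H_n(\kappa r)$ each grow exponentially in $n$; only their \emph{difference} $G_n$ is controlled. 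This is exactly why the paper's final lemma of Section~\ref{secBesHan} bounds $|\sum_{\pm}I^{\pm,n,j}_{\chi_-}|$ rather than $\sum_{\pm}|I^{\pm,n,j}_{\chi_-}|$, and switches to the first line of \eqref{Gn} (Bessel form, $J_n(\tilde r\kappa)-\frac{J_n(\kappa)}{H_n(\kappa)}H_n(\tilde r\kappa)$, both factors decaying) precisely to exhibit the cancellation. Without this recombination step your argument for the $\chi_-$ regime does not go through. Relatedly, for the second statement with $r\geq\sqrt{2}$, $r\rho$ can exceed $1$ even when $\rho<1-\varepsilon$, so $H_n(\kappa r)$ oscillates instead of decaying and there is no exponential smallness at all in a sub-region; the paper handles this case separately. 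Finally, your summation over $j\geq j(r,h)$ is asserted via an unjustified geometric gain $2^{-j\delta}$; the paper's actual per-$j$ bounds (e.g.\ \eqref{I-chi+Msm}) are not geometric in $j$, and the summability comes from the coupled constraints $n\lesssim 2^{-j}/h$, $j\lesssim\log_2(1/h)$, together with the non-positive powers of $n$ gathered from the Airy prefactors.
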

In the remaining of this section we prove Lemma \ref{lemI}.
Let $\kappa=\kappa(\vartheta,\tau)=\sqrt{\tau^2-\vartheta^2}$ and set 
\begin{equation}\label{decompJ}
G^+_{n}(r,s,\kappa)=\frac{\pi}{4i\sqrt{rs}}\overline{H}_{n}(s\kappa)H_{n}(r\kappa), G^-_{n}(r,s,\kappa):=\frac{\pi}{4i \sqrt{rs}}\frac{\overline{H}_{n}(\kappa)}{H_{n}(\kappa)}H_{n}(s\kappa)H_{n}(r\kappa).
\end{equation}
Substitute \eqref{decompJ} in \eqref{Gn} and denote $\mathcal{R}^{\pm}$ and $I^{\pm}(Q,Q_0,\tau)$ the corresponding contributions, respectively, so that $I=I^++I^-$. Let $\chi_0 \in C^{\infty}_0(-2,2)$ valued in $[0,1]$ and equal to $1$ on $[-1,1]$ and $\chi_{\pm}{(\ell)}:=(1-\chi_0(\ell))1_{\pm\ell>0}$. Consider $n\neq 0$ and write $I^{\pm}=\sum_{*\in \{0,\pm\}}I^{\pm}_{\chi_*}$, where, for $\chi_*\in\{\chi_0,\chi_\pm\}$, $n\geq 1$ and $j\geq 0$ we define
\begin{equation}\label{chi12nj}
I^{\pm,n,j}_{\chi_*}:=\int_0^{\infty}e^{\ii t\tau}\chi(h\tau)\int e^{iz\vartheta }\chi_*\big((\frac{\sqrt{\tau^2-\vartheta^2}}{n}-1)/\varepsilon\big)s^2\psi(2^{2j}(1-(\vartheta/\tau)^2))G^{\pm}_{n}(r,s,\kappa(\vartheta,\tau))d\vartheta d\tau
\end{equation}
and set $I_{\chi_*}^{\pm}=\sum_{j\geq 0} \sum_{n\in\mathbb{N}\setminus\{0\}} (e^{in\theta}+e^{-in\theta}) I_{\chi_*}^{\pm,n,j}$ for some small $\varepsilon>0$. Then $I^j=2\sum_{*\in\{0,\pm\}}\sum_{n}\cos(n\theta)I^{\pm,n,j}_{\chi_*}$. Then $\sqrt{\tau^2-\vartheta^2}/n<1-\varepsilon$, $\sqrt{\tau^2-\vartheta^2}/n\in[1-2\varepsilon,1+2\varepsilon]$ and $\sqrt{\tau^2-\vartheta^2}/n>1+\varepsilon$ on the support of $\chi_-,\chi_0,\chi_+$.\\

In the following, we look for upper bounds for $|I^{\pm,n,j}_{\chi_*}|$ first when $s\leq r\leq \sqrt{2}$, then for $r\geq \sqrt{2}$ and $j\geq j(r,h)$ and check that the sums over $n, j$ remain bounded by $C/(h^2t)$ for some uniform constant $C>0$ independent of the parameters. We may assume that $n\geq n_0$ for some large $n_0$, as, for bounded values, the result is trivial. We start with the main part $I^{\pm}_{\chi_+}$ which corresponds to values $\rho:=(\sqrt{\tau^2-\vartheta^2})/n\geq 1+\varepsilon$. Let $\vartheta=\tau\gamma$ then $\rho=\tau\sqrt{1-\gamma^2}/n\geq 1+\varepsilon$. Let $\tilde \rho\in\{\rho, r\rho, s\rho\}$. With $\Phi_+$ given in Lemma \ref{lem:Phi+} we get from \eqref{Hnn} 
\begin{equation}\label{Hnnbetter}
H_n(n\tilde\rho)\sim_{1/n}2e^{-\frac{\ii\pi}{3}}\Big(\frac{4\tilde{\zeta}(\tilde\rho)}{1-\rho^2}\Big)^\frac{1}{4}n^{-\frac13}A_+(n^{\frac23}\tilde{\zeta}(\tilde\rho))
		\Big[\sum_{j\geq0}\Big(a_j +n^{-\frac43}\Phi_+(n^{\frac23}\tilde{\zeta}(\tilde\rho))b_j\Big)(-n^{\frac23}\tilde{\zeta}(\tilde\rho))^{-3j/2}\Big],
\end{equation}
\[
 A_+(n^{\frac23}\tilde{\zeta}(\tilde\rho)) \sim_{1/n} n^{-\frac16}\big(-\tilde{\zeta}(\tilde\rho)\big)^{-\frac14}e^{-\ii \frac23n(-\tilde{\zeta}(\tilde\rho))^{\frac32}}\Big(1+O \big((-n^{\frac23}\tilde{\zeta}(\tilde\rho))^{-1}\big)\Big), \text{ if } n^{\frac23}\tilde{\zeta}(\tilde\rho)>2.
\]
On the support of the cut-off functions in \eqref{chi12nj} for $*=+$, the symbol of $I^{\pm,n,j}_{\chi_+}$ becomes
\begin{equation}\label{Jsymb}
J^{\pm,n,j}_{\chi_+}(r,s,\rho):= n^{-2\times \frac13-2\times\frac16}\frac{s^2\chi_+((\tau\sqrt{1-\gamma^2}/n-1)/\varepsilon)}{(rs)^{\frac 12}((r\rho)^2-1)^{\frac14}((s\rho)^2-1))^{\frac14}}\Sigma_\pm(r,s,\rho,n)\tau\chi(h\tau)\psi(2^{2j}(1-\gamma^2)),
\end{equation}
where $\Sigma_{\pm}$ are asymptotic expansions with small parameter $n^{-1}$ and with main contribution obtained as a product of $a_0$ in \eqref{Hnn} and $\sigma_0$ in \eqref{eq:Phi+} hence elliptic.  
The phase functions of $I_{\chi_+}^{\pm,n,j}$, denoted $\phi^{\pm}_{n}$, read as
\begin{equation}\label{inout-phase}
\tau\phi^{\pm}_{n}:=t\tau+z\gamma \tau-n \Big(f_0(r,\rho)\mp f_0(s,\rho)\Big),\quad 
f_0(r,\rho):=\tfrac23 \big(-\tilde{\zeta}(r\rho)\big)^{\frac32}-
\tfrac23 \big(-\tilde{\zeta}(\rho)\big)^{\frac32},
\end{equation}
where we recall $\rho=\frac{\tau\sqrt{1-\gamma^2}}{n}$. 
The phases $\phi^{\pm}_{n}$ of $I^{\pm,n,j}_{\chi_+}$ are stationary when $\nabla_{\tau,\gamma}(\tau \phi^{\pm}_n)=0$, that is
\begin{equation}\label{phastattaugam}
\partial_{\tau}(\tau \phi^{\pm}_n) =t+z\gamma-\frac{n}{\tau}\big(f_1(r,\rho)\mp f_1(s,\rho)\big),\quad 
\tau\partial_{\gamma}\phi^{\pm}_n =\tau\big(z+\frac{\gamma}{\sqrt{1-\gamma^2}}\frac{(f_1(r,\rho)\mp f_1(s,\rho))}{\rho}\big),
\end{equation}
where $f_1(r,\rho):=\sqrt{(r\rho)^2-1}-\sqrt{\rho^2-1}$ and where the derivative of $f_0$ is obtained from Lemma \ref{lemzeta}.

\begin{lemma}\label{lemchi+}
There exists $C>0$ so that for all $\sqrt{2}\geq r\geq s\geq 1$ the following holds $\sum_{n\geq n_0, j\geq 0}|I^{\pm,n,j}_{\chi_+}|\leq \frac{C}{h^2 t}$. For $r\geq s$ with $r\geq \sqrt{2}$ and for $j(r,h)$ given in Definition \ref{defjjsh}
we also have $\sum_{n\geq n_0, j\geq j(r,h)}|I^{\pm,n,j}_{\chi_+}|\leq \frac{C}{h^2 t}$.
\end{lemma}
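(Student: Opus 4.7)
To prove the bound, my plan is first to reduce each $I^{\pm,n,j}_{\chi_+}$ to a standard oscillatory integral by inserting the Debye-type asymptotic \eqref{Hnnbetter} for the Hankel functions into the kernels $G^{\pm}_n$ of \eqref{decompJ}. On the support of $\chi_+$ we have $\rho\ge 1+\varepsilon$ strictly, so $n^{2/3}\tilde\zeta(r\rho)$, $n^{2/3}\tilde\zeta(s\rho)$ and $n^{2/3}\tilde\zeta(\rho)$ are all negative and bounded away from zero uniformly in the parameters; this places $A_+$ in its oscillatory (WKB) regime, and the resulting expressions match \eqref{Jsymb} and \eqref{inout-phase}: phase $\tau\phi^{\pm}_n$ with symbol $J^{\pm,n,j}_{\chi_+}$ that is a classical symbol of order zero in the small parameter $1/n$, up to $O(n^{-\infty})$ remainders that sum harmlessly.

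The second step is the change of variables $\tau=n\rho/\sqrt{1-\gamma^2}$ (with $d\tau\,d\gamma=(n/\sqrt{1-\gamma^2})\,d\rho\,d\gamma$), which recasts the phase as $n\Psi^\pm(\rho,\gamma)$, where
\begin{equation*}
\Psi^\pm(\rho,\gamma)=\frac{(t+z\gamma)\rho}{\sqrt{1-\gamma^2}}-\bigl(f_0(r,\rho)\mp f_0(s,\rho)\bigr),
\end{equation*}
so that $n\ge n_0$ plays the role of a uniform large parameter. The critical system \eqref{phastattaugam} reduces to $\gamma_c=-z/t$ and $\sqrt{t^2-z^2}=(f_1(r,\rho_c)\mp f_1(s,\rho_c))/\rho_c$; by the monotonicity in $\rho$ of the right-hand side, which follows from Lemma \ref{lemzeta} after a direct computation of $\partial_\rho(f_1(r,\rho)/\rho)=\partial_\rho\sqrt{r^2-\rho^{-2}}$, the critical point $\rho_c\in(1+\varepsilon,\infty)$ is unique when it exists. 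A short calculation shows that the Hessian is diagonal there:
\begin{equation*}
\partial_\gamma^2\Psi^\pm\big|_c=\frac{\rho_c\,t}{(1-\gamma_c^2)^{3/2}},\qquad \partial_\rho^2\Psi^\pm\big|_c=-\partial_\rho\Bigl(\frac{f_1(r,\rho)\mp f_1(s,\rho)}{\rho}\Bigr)\Big|_{\rho_c}.
\end{equation*}

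The third step is two-dimensional stationary phase together with integration by parts outside a small neighborhood of the critical set. In the stationary regime, accounting for the Jacobian $n/\sqrt{1-\gamma^2}$ and the explicit form of the amplitude, the main contribution per $(n,j)$ is bounded by $(nt)^{-1/2}$ times a bounded geometric factor (the amplitude $J^{\pm,n,j}_{\chi_+}$ is uniformly controlled on the support of $\chi_+$ since $\rho\ge 1+\varepsilon$ keeps $((r\rho)^2-1)^{1/4}((s\rho)^2-1)^{1/4}$ bounded below), while repeated integration by parts in $(\rho,\gamma)$ away from the critical point yields $O(n^{-N})$ remainders for any $N$. The $\tau$-cutoff $\chi(h\tau)$ combined with $\tau=n\rho_c/\sqrt{1-\gamma_c^2}$ and the dyadic localization $1-\gamma_c^2\sim 2^{-2j}$ constrains the admissible $n$ to a window of size $\sim 2^{-j}/h$; the partial $n$-sum at fixed $j$ then contributes a factor $\lesssim 1/h$, and the $j$-sum (from $j=0$ when $r\le\sqrt{2}$, from $j=j(r,h)$ when $r\ge\sqrt{2}$) is a geometric series, producing the stated bound $C/(h^2 t)$.

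The main obstacle is controlling $\partial_\rho^2\Psi^\pm|_c$ uniformly. It can degenerate when $r\to s$ in the "$+$" case (where the entire phase $f_0(r,\rho)-f_0(s,\rho)$ becomes small) and as $\rho_c\to\infty$ (where $\partial_\rho^2\Psi^\pm|_c$ vanishes at the rate $\rho^{-2}$); both must be handled carefully, either by extracting the smallness of $r-s$ so that the non-stationary bounds are effective, or by using the upper bound $\rho\lesssim 1/(n_0 h)$ together with finite speed of propagation $|z|\le t$ to close the estimate directly from the size of the $(\tau,\gamma)$-support. For the second part of the lemma ($r\ge\sqrt{2}$, $j\ge j(r,h)$), the constraint $j\ge j(r,h)$ forces $1-\gamma_c^2\lesssim(h/r)^{2/3}$, which places $\rho_c$ in a range where the Hessian is non-degenerate and the $j$-sum is geometric from the start, yielding the same final bound.
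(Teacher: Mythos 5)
Your approach (a two-dimensional stationary phase in $(\rho,\gamma)$ with $n$ as the large parameter) is genuinely different from the paper's, which keeps $\tau$ as a separate variable, does stationary phase or non-stationary integration by parts only in $\varphi=2^j\sqrt{1-\gamma^2}$, and estimates the $\tau$-integral directly by the size of its support. The route you sketch could in principle be made to work, but as written it has two serious gaps that together make the final power-counting fail.

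First, the per-$(n,j)$ bound. After the substitutions $\vartheta=\tau\gamma$ and $\tau=n\rho/\sqrt{1-\gamma^2}$ the measure contributes $\tau\cdot n/\sqrt{1-\gamma^2}=n^2\rho/(1-\gamma^2)$, and the amplitude $J^{\pm,n,j}_{\chi_+}$ of \eqref{Jsymb} carries an additional factor $\tau\sim n\rho/\sqrt{1-\gamma^2}$ times $n^{-1}$, so the pre-factor in front of the oscillatory integral is of size $n^2\rho^2(1-\gamma^2)^{-3/2}\sim n^2\rho_c^2 2^{3j}$. The two-dimensional stationary phase gives $n^{-1}|\det\mathrm{Hess}\,\Psi|^{-1/2}$ with $\det\mathrm{Hess}\,\Psi|_c=\rho_c t(1-\gamma_c^2)^{-3/2}\cdot|\partial^2_\rho\Psi|_c$, hence a contribution of order $n\rho_c^{3/2}\,2^{3j/2}\,t^{-1/2}\,|\partial^2_\rho\Psi|_c^{-1/2}$ per term. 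This is not $(nt)^{-1/2}$; it grows with $n$ and with $2^j$, and the subsequent claim that "the $n$-sum contributes $\lesssim 1/h$ and the $j$-sum is geometric" does not follow. If you carry this bound through the constraints $n\rho_c\sim 2^{-j}/h$ and $n\lesssim 2^{-j}/h$, the resulting total is of order $h^{-3}t^{-1/2}|\partial^2_\rho\Psi|_c^{-1/2}$, which is much larger than $h^{-2}t^{-1}$ for large $t$ unless $|\partial^2_\rho\Psi|_c\gtrsim t$, which is false (that Hessian is bounded). Compare with the paper's explicit bounds (e.g.\ $|I^{-,n,j}_{\chi_+}|\lesssim h^{-2}\,2^{-2j}/(n\rho)$ in the small-$r$ case, and \eqref{I-chi+Msm} in the large-$r$ case), which are summable precisely because they carry compensating powers of $2^{-j}$.

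Second, the degeneracy of $\partial^2_\rho\Psi$ is not only the $r\to s$ issue in the "$+$" case that you flag. For the reflected kernel, $\partial^2_\rho\Psi^-|_c=-\partial_\rho\bigl(\tfrac{f_1(r,\rho)+f_1(s,\rho)}{\rho}\bigr)=\rho^{-3}\bigl(\tfrac{2}{\sqrt{1-\rho^{-2}}}-\tfrac{1}{\sqrt{r^2-\rho^{-2}}}-\tfrac{1}{\sqrt{s^2-\rho^{-2}}}\bigr)$ tends to zero as $r,s\to 1$, which is exactly the generic regime of the first assertion of the lemma ($1\le s\le r\le\sqrt 2$). In that regime the paper makes no use of stationarity in the radial direction: the phase is essentially affine in $\tau$ and the critical-point system in $\gamma$ forces $t\sim|z|$, so the paper either integrates by parts in $\varphi$ (when $2^{-2j}|z|\gtrsim 1$) or bounds by the size of the $(\tau,\gamma)$-support (when $2^{-2j}|z|\lesssim 1$). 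Your proposed remedies ("extract the smallness of $r-s$"; "use $\rho\lesssim 1/(n_0 h)$ and finite speed of propagation") do not address this $r,s\to 1$ degeneracy and are too vague to verify. Finally, in the second assertion of the lemma the hypothesis $j\ge j(r,h)$ is used in the paper to derive the crucial inequality $2^{-4j}|z|\lesssim h$ (hence $|z|^{1/2}\le h^{1/2}2^{2j}$) that makes the $(n,j)$-sum converge; your summation argument does not exploit this constraint at all.
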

\begin{proof}
We focus on $I^{-,n,j}_{\chi_+}$. Let 
$\varphi:=2^j\sqrt{1-\gamma^2}$, then $\varphi\in (1/2,2)$
on the support of $\psi(2^{2j}(1-\gamma^2))=\psi(\varphi^2)$ and $|\gamma|\geq 1/4$ (when $j=0$ there is no need to change variables).
Let $\phi^-_{n,j}:=\phi^-_{n}|_{\gamma=\sqrt{1-2^{-2j}\varphi^2}}$ for $\varphi\sim 1$. As $r\geq s$ and $\rho\geq 1+\varepsilon$, the factor depending on $r,s,\rho$ in \eqref{I-chi+Msm+} is uniformly bounded by $1/\rho$. \\

Let first $1<s\leq r\leq \sqrt{2}$ and $t\sim |z|$. If $2^{-2j}|z|\gtrsim 1$ then $\tau|\partial_{\varphi}\phi^-_{n,j}|=\tau |\frac{\partial\gamma}{\partial\varphi}\partial_{\gamma}\phi^-_{n,j}|\sim \tau 2^{-2j}|z|\gtrsim 1/h$ : repeated integrations by parts yield $O(h^{N}(2^{-2j}|z|)^{-N})$ for all $N\geq 1$, hence for small $r$ we find
\begin{equation}\label{I-chi+Msm+}
|I^{-,n,j}_{\chi_+}(Q,Q_0,h)|\lesssim \frac{1}{h^2}\times \frac{2^{-2j}}{n}\times \frac{s^2}{(rs)^{1/2}}\frac{h^{N}(2^{-2j}|z|)^{-N}}{((r\rho )^2-1)^{1/4}((s\rho)^2-1)^{1/4}}.
\end{equation}
Take $N=1$, then $\sum_{n< 2^{-j}/h}\sum_{2^{2j}\lesssim |z|} 2^{-2j}/(n\rho) \times h(2^{2j}/|z|)\leq \frac{1}{|z|}\sum_{2^{2j}\lesssim |z|} 2^{j}\times 2^{-j}/h\leq \frac{h\log(1/h)}{|z|}$ where we used $(n\rho)^{-1}=2^j h$, $n< 2^{-j}/h$ and $j\leq \log_2(1/h)$. Same computation with $N\geq 1$ yields $\sum_{n,2^{2j}\lesssim t}|I^{-,n,j}_{\chi_+}|\lesssim \frac{O(h^{N})\log(1/h)}{h^2 t}$. For $2^{-2j}|z|\leq 1$ we have again, $|I^{-,n,j}_{\chi_+}|\lesssim \frac{1}{h^2}\frac{2^{-2j}}{n\rho}$ and  $\sum_{n< 2^{-j}/h}\sum_{2^{2j}\geq |z|} 2^{-2j}/(n\rho) \lesssim \sum_{2^{2j}\geq |z|}2^{-2j+j}h\times (2^{-j}/h)\leq \sum_{2^{2j}\geq |z|}2^{-2j}\leq 1/|z|\sim 1/t$. If $t/|z|\not\in [1/2,2]$, repeated integrations by parts in $\tau$ yield the same kind of bounds with additional factors $h^N$ for all $N\geq 1$. \\

Let now $r\geq \sqrt{2}$ and $s\leq r$ such that $\tau 2^{-3j}r\leq 1$; since the phase is stationary w.r.t. $\gamma $ when $|z|\sim 2^j r$, it follows that, if $\tau 2^{-4j}|z|\geq 4$, we may integrate by parts in $\varphi$ (in which case the remainders may be dealt with as before) to conclude. Let therefore $\tau 2^{-4j}|z|\leq 4$. 
We notice that when $t\geq 4(|z|+2^{j}r)$ the phase $\tau\phi^-_{n,j}$ is not stationary in $\tau$ :
 in this case we integrate by parts in $\tau$ and obtain an upper bound for $|I^{-,n,j}_{\chi_+}|$ of the form \eqref{I-chi+Msm+} but with $h^{N}(2^{-2j}|z|)^{-N}$ replaced by $(h/t)^N$.
For $N\geq 1$ gives $\sum_{n< 2^{-j}/h,j\geq j(r,h)}|I^{-,n,j}_{\chi_+}|\lesssim \frac{h^N}{h^2t}\sum_{n< 2^{-j}/h, j\leq \log(1/h)}\frac{2^{-2j}}{n\rho}\leq \frac{h^N\log(1/h)}{h^2t}$ (where we didn't use that $j\geq j(r,h)$).\\

Let $|z|\sim 2^j r$ and $t\leq 4(|z|+2^{j}r)\sim 4|z|$, then we have again $|I^{-,\chi_+,j}_{\chi_0,\chi_0,n}|\lesssim \frac{1}{h^2}\frac{2^{-2j}}{n\rho}$ 
and we are left to estimate the sum over $j\geq 1$ satisfying $\tau 2^{-4j}|z|,\tau 2^{-3j}r\lesssim 1$. If moreover $2^{-2j}|z|\leq 1$, we find 
\begin{equation}\label{Ichi+00nnst}
\sum_{1<2^{-j}/(nh),j\geq j(r,h)}|I^{-,n,j}_{\chi_+}|\lesssim \frac{1}{h^2}\sum_{n\leq 2^{-j}/h,2^{-2j} |z|\leq 1} \frac{2^{-2j}}{n}\times nh 2^j=\frac{1}{h^2}\sum_{2^{-2j}\leq 1/|z|}h 2^{-2j+j}\times \frac{2^{-j}}{h}\lesssim \frac{1}{h^2 t}.
\end{equation}
When $2^{-2j}|z|\gtrsim 1$ we bound from below $\frac{\tilde\tau}{h}\partial^2_{\varphi}\phi^-_{n,j}|_{\partial_{\varphi}\phi^-_{n,j}=0}\gtrsim \frac{2^{-2j}|z|}{h}$. The stationary phase yields 
\begin{equation}\label{I-chi+00n}
I^{-,n,j}_{\chi_+}=\frac{1}{h^2}\int e^{\frac ih \tilde\tau\phi^{-}_{n,j}}\chi(\tilde\tau)\frac{(\tilde\tau/h)^{-1/2}}{\sqrt{\partial^2_{\varphi}\phi^-_{n,j}}|_{\partial_{\varphi}\phi^-_{n,j}=0}}\Big( \tilde J^{-,n,j}_{\chi_+}(r,s,\frac{\tilde\tau}{2^j nh})+h2^{-j}O((2^{-2j}|z|/h)^{-\infty})\Big)
d\tilde\tau,
\end{equation}
where $\tilde J^{-,n,j}_{\chi_+}(r,s,\frac{\tau}{2^j n})$ is the symbol with main contribution $J^{-,n,j}_{\chi_+}$ introduced in \eqref{Jsymb} and where $h 2^{-j}$ comes from the factors $2^{-2j}\times\frac {1}{n}\times \frac{nh}{2^{-j}}$ of the symbol. 
In order to uniformly bound the sum of \eqref{I-chi+00n}, notice that the phase is stationary when $t\sim |z|\sim 2^jr$.
As $2^{-4j}|z|\lesssim h$, then
 $|z|^{1/2}\leq h^{1/2}2^{2j}$ and
\begin{equation}\label{I-chi+Msm}
|I^{-,n,j}_{\chi_+}|  \lesssim \frac{1}{h^2 t}\times \frac{h^{1/2}|z|^{1/2}2^{-2j+j}}{n}\times nh2^j
\leq  \frac{1}{h^2 t}\times h^{2}2^{2j},\quad  h^2\sum_{n_0\leq n, 2^{j}< 1/(h n)}2^{2j}\leq h\sum 2^j \lesssim,
\end{equation}
where we used that $ n\leq 2^{-j}/h$ on the support of $\chi_+$. Notice that the condition $j\geq j(r,h)$ was particularly useful here in order to obtain the sharp bounds in \eqref{I-chi+Msm}. 
In the same way one may deal with $I^{+,n,j}_{\chi_+}$ and obtain similar bounds. The proof of the Lemma is achieved. 
\end{proof}

Next, we turn to $I_{\chi_0}^{\pm,n,j}$ whose symbols are supported for $\frac{\tau\sqrt{1-\gamma^2}}{n}\in[1-2\varepsilon,1+2\varepsilon]$.  
For each $j\geq 1$, it will be convenient to take $\tau2^{-j}\varphi= n+n^{1/3}w$ : on the support of the symbol of $I^{\pm,n,j}_{\chi_0}$ we now have $
w n^{-2/3}\in[-2\varepsilon,+2\varepsilon]$ and $2^{-j}/h\sim n\geq 1$ as $\tau\sim 1/h$.
Write again $1=\sum_{*\in\{0,\pm\}}\chi_*(w)$ where $\chi_{\pm}(\ell)=(1-\chi_0)(\ell)1_{\pm \ell>0}$ and denote $I^{\pm,n,j}_{\chi_0,\chi_*}$ the corresponding integrals (defined as in \eqref{chi12nj} but with additional cutoffs $\chi_*(n^{2/3}(\frac{\sqrt{\tau^2-\vartheta^2}}{n}-1))$). We deal separately with the cases $w>1$, $|w|\leq 2$ and $w<-1$.
\begin{lemma}
For $1< s\leq r\leq \sqrt{2}$ we have $\sum_{n\geq n_0, j\geq 1}|I^{\pm,n,j}_{\chi_0,\chi_*}|\lesssim \frac{1}{h^2  t}$, $*\in\{0,+\}$. For $r\geq s$ with $r\geq\sqrt{2}$ and $j(r,h)$ as in Definition \ref{defjjsh} 
we have $\sum_{n\geq n_0, j\geq j(r,h)}|I^{\pm,n,j}_{\chi_0,\chi_*}|\lesssim \frac{1}{h^2 t}$, $*\in\{0,+\}$.
\end{lemma}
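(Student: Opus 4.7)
The plan is to mirror the analysis of Lemma \ref{lemchi+}, adding the care required to handle the transition-zone behaviour of $H_n(n\rho)$ (and of the ratio $\bar H_n(n\kappa)/H_n(n\kappa)$ appearing in $G^-_n$) whose arguments sit within $O(n^{1/3})$ of $n$ on the support of $\chi_0\cup\chi_+$. With the change of variables $\tau 2^{-j}\varphi = n+n^{1/3}w$ (recall $\varphi:=2^j\sqrt{1-\gamma^2}$), the $\gamma$-integration is replaced by a $w$-integration with Jacobian $\sim n^{-2/3}$, and the constraint $n\sim 2^{-j}/h\sim \tau 2^{-j}$ pins down $n$ in terms of $j$ (up to the integer grid). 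Since $r,s>1$, the arguments $r\rho,s\rho$ remain bounded away from $1$ uniformly in $w$, so \eqref{Hnnbetter} applies in its purely oscillatory form and supplies the phases $\mp\tfrac{2}{3}n(-\tilde\zeta(r\rho))^{3/2}$, $\mp\tfrac{2}{3}n(-\tilde\zeta(s\rho))^{3/2}$ together with the symbol decay $((r\rho)^2-1)^{-1/4}((s\rho)^2-1)^{-1/4}$ of \eqref{Jsymb}.

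For $*=0$, so $|w|\leq 2$, the Airy factors $A_\pm(n^{2/3}\tilde\zeta(\rho))=A_\pm(-2^{1/3}w(1+O(n^{-2/3})))$ are bounded and non-oscillatory and may be absorbed into the symbol; the resulting oscillatory integral has phase identical to \eqref{inout-phase} except for the $H_n(n\rho)$ contribution, and the $w$-integration over the compact set $\{|w|\leq 2\}$ produces only the Jacobian gain $n^{-2/3}$. One then applies stationary phase in $(\tau,\gamma)$ when $t\sim|z|$ and integrates by parts in $\tau$ or $\gamma$ otherwise, exactly as in Lemma \ref{lemchi+}; after summing over $n\sim 2^{-j}/h$ and $j$ this reaches the target $\frac{1}{h^2 t}$. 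For $*=+$, so $w\geq 1$, the oscillatory Airy asymptotic gives an extra phase contribution $\mp\tfrac{2\sqrt{2}}{3}w^{3/2}(1+O(n^{-2/3}w))$, reducing matters to a variant of the $\chi_+$ integration of Lemma \ref{lemchi+}. A dyadic decomposition $w\sim 2^k$ for $1\leq k\lesssim \tfrac{2}{3}\log_2 n$ then allows one to integrate by parts in $w$ when the $w$-derivative of the full phase is large and to apply stationary phase on the finitely many dyadic scales where it is not; the dichotomy $1<s\leq r\leq\sqrt{2}$ versus $r\geq \sqrt{2}$ with $j\geq j(r,h)$ plays the same sharpening role as in Lemma \ref{lemchi+}, with $\tau 2^{-3j}r\lesssim 1$ providing the gain that absorbs the factor $h^{1/2}|z|^{1/2}$ of \eqref{I-chi+Msm}.

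I expect the main obstacle to be the careful expansion of the ratio $\bar H_n(n\kappa)/H_n(n\kappa)$ in $G^-_n$ near the turning point, where both Hankel functions have comparable moduli and only the difference of their Airy phases contributes coherently: one must retain the subleading corrections in \eqref{Hnnbetter} to avoid losing the cancellation that distinguishes $G^-$ from $G^+$. A secondary delicacy is that the denominators $((r\rho)^2-1)^{1/4}((s\rho)^2-1)^{1/4}$ in \eqref{Jsymb} become small when $r,s\downarrow 1$; this is controlled by the uniform lower bound $r\rho-1\geq (r-1)+n^{-2/3}w$ valid on the support of $\chi_0+\chi_+$, at the cost of isolating a sub-regime $r-1\lesssim n^{-2/3}$ in which the symbol must be bounded directly by the transition-zone asymptotic rather than via the oscillatory expansion.
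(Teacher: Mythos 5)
Your opening claim that ``Since $r,s>1$, the arguments $r\rho,s\rho$ remain bounded away from $1$ uniformly in $w$, so \eqref{Hnnbetter} applies in its purely oscillatory form'' is the crux of the gap. The lemma must hold uniformly over $1<s\leq r\leq\sqrt2$, which allows $r-1$ and $s-1$ to be as small as one likes; on the support of $\chi_0(w)$ one has $\rho=1+n^{-2/3}w$ with $|w|\leq 2$, so $r\rho-1=(r-1)+rn^{-2/3}w$ can itself be of size $O(n^{-2/3})$. In that regime $n^{2/3}\tilde\zeta(r\rho)$ and $n^{2/3}\tilde\zeta(s\rho)$ stay bounded, the Airy factors in \eqref{Hnnbetter} are in the transition zone and carry \emph{no phase at all}, and the putative phases $\mp\tfrac{2}{3}n(-\tilde\zeta(r\rho))^{3/2}$, $\mp\tfrac{2}{3}n(-\tilde\zeta(s\rho))^{3/2}$ that your argument relies on simply do not appear. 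You flag this at the end as a ``secondary delicacy'' confined to ``$r-1\lesssim n^{-2/3}$'', but that directly contradicts the first paragraph, and more importantly, in the paper's proof this \emph{is} the main case for $1<s\leq r\leq\sqrt2$: the whole argument is run with the Airy factors absorbed into the symbol. (Also, the stated inequality $r\rho-1\geq(r-1)+n^{-2/3}w$ is false for $w<0$ since $r>1$, so it does not serve as the advertised uniform lower bound.)

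Concretely, in the transition-zone regime the phase of $I^{-,n,j}_{\chi_0,\chi_0}$ reduces to $\tau\bigl(t+z\sqrt{1-2^{-2j}\varphi^2}\bigr)$, with $\tau=\tfrac{2^j}{\varphi}(n+n^{1/3}w)$; this is linear in $w$, so the $(w,\varphi)$ Hessian is off-diagonal and the two-variable stationary phase produces $|\partial^2_{w\varphi}|^{-1}\sim(2^{-j}n^{1/3}|z|)^{-1}$, not the $\chi_+$-type bound you invoke. Moreover the paper's sum over $(n,j)$ first isolates $t\lesssim h^{-1/3}$, where one bounds the absolute value directly and sums $\sum_{j,n\sim 2^{-j}/h}n^{2/3}\lesssim h^{1/3}/h^2\lesssim 1/(h^2t)$; this step is indispensable for obtaining the $1/(h^2t)$ bound at small $t$ and is absent from your outline. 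Without the transition-zone estimate, the degenerate stationary-phase computation, and the small-$t$ direct summation, the claimed conclusion ``after summing over $n\sim 2^{-j}/h$ and $j$ this reaches the target'' is not justified, and the argument as written does not close.
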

\begin{proof}
On the support of $\chi_{+}(w)$ we may proceed in a similar way as in Lemma \ref{lemchi+} as the same asymptotic expansions hold for the Hankel factors; as the computations are similar (modulo the change of variable w.r.t. $\tau$) we focus on $I_{\chi_0,\chi_0}^{-,n,j}$ with symbol $\chi_0(w)$.
The expansion \eqref{Hnnbetter} still holds (with the simpler form \eqref{Hnnv}): when $n^{2/3}(-\tilde\zeta(\tilde\rho))< 2$ (with $\tilde\rho\in \{\rho,r\rho, s\rho\}$), the Airy factors don't oscillate and may be brought into the symbol. 

Let first $1<s\leq r\leq \sqrt{2}$ when the last inequality holds.
The phase of $I^{-,n,j}_{\chi_0,\chi_0}$ equals $\tau(t+z\sqrt{1-2^{-2j}\varphi^2})$, and taking $\tau=\frac{2^{j}}{\varphi}(n+n^{1/3}w)$ we are reduced to obtaining uniform bounds for
\begin{equation}\label{I-chi-0sm}
\frac{s^2}{(rs)^{1/2}}\int e^{i\frac{2^{j}}{\varphi}(n+n^{1/3}w) (t+z\sqrt{1-2^{-2j}\varphi^2})}\chi(h\frac{2^{j}}{\varphi}(n+n^{1/3}w))\frac{2^{j}}{\varphi}(n+n^{1/3}w)\psi(\varphi) \chi_0(w) n^{-2/3+1/3}\frac{2^{-2j+j}}{\varphi}d\varphi dw,
\end{equation}
where the factors $2^{-2j+j}\frac{n^{1/3}}{\varphi}$ come from  $\gamma\rightarrow \varphi$, $\tau\rightarrow w$ and where $n\sim \frac{2^{-j}}{h}$, 
For $t\lesssim h^{-1/3}$, the sum of all contributions of the form \eqref{I-chi-0sm} may be bounded as follows
\begin{equation}\label{I-chi_0jsmall}
\sum_{j,n}|I^{-,n,j}_{\chi_0,\chi_0}|\leq \sum_{j,n\sim 2^{-j}/h} n^{2/3}\sim \sum_{j, h^{1/3}2^{j/3}\leq 1}(2^{-j}/h)^{5/3}\leq \frac{h^{1/3}}{h^2}\lesssim \frac{1}{h^2 t}.
\end{equation}
For $t\gtrsim h^{-1/3}$ satisfying $t\geq 2|z|$, the phase is non-stationary w.r.t. $w$ ; integrations by parts with the large parameter $2^j n^{1/3}\sim 2^{2j/3}/h^{1/3}$ yield a contribution $O((2^j n^{1/3}/|t|)^{-N})$ for all $N\geq 1$ and we conclude. For $h^{-1/3}\lesssim t\leq 4|z|$ we have $\frac{1}{|z|}\leq\frac{4}{t}$ and we apply the stationary phase in both $w,\varphi$: let $\phi^{-}_{0,n,j}:=\frac{2^{j}}{\varphi}(n+n^{1/3}w) (t+z\sqrt{1-2^{-2j}\varphi^2})$ then $\partial^2_{w}\phi^-_{0,n,j}=0$ and the determinant of the Hessian matrix equals $(\partial^2_{w,\varphi}\phi^-_{0,n,j})^2\sim (2^{-j}n^{1/3}|z|)^2$ for $\varphi\sim1 $. If $2^{-j}n^{1/3}|z|\geq h^{-\epsilon}$ for some small $\epsilon>0$, we find, for small $r,s$,
\[
\sum_{j, n\geq n_0}|I^{-,n,j}_{\chi_0,\chi_0}|\leq \sum_{j,n\sim 2^{-j}/h}\frac{n^{2/3}}{2^{-j}n^{1/3}|z|}\lesssim \sum_{j, 2^j<1/h}\frac{2^j}{|z|}\times (2^{-j}/h)^{4/3}(1+O(h^{\infty}))\lesssim \frac{h^{2/3}}{h^2 t}.
\]
 If $2^{-j}n^{1/3}|z|\leq 2h^{-\epsilon}$ then we bound the sum of $|I^{-,n,j}_{\chi_0,\chi_0}|$ as in \eqref{I-chi_0jsmall} by $\sum_{j,n\sim2^{-j}/h}n^{2/3}$ and use that $2^{-j/3}/h^{1/3}\sim n^{1/3}\leq 2^{j+1}h^{-\epsilon}/|z|$ which gives $\sum_{j,n\sim2^{-j}/h}n^{2/3}\leq \frac{1}{h^2|z|}h^{2/3-\epsilon}\sum_{j}2^{j+1-4j/3}\lesssim \frac{h^{2/3-\epsilon}}{h^2t}$.\\

Let now $r\geq \sqrt{2}$ such that $n^{2/3}(-\tilde\zeta(r\rho))>1$. If moreover $n^{2/3}(-\tilde\zeta(s\rho))>1$, then both Airy factors $A(n^{2/3}\tilde\zeta(r\rho))$, $A(n^{2/3}\tilde\zeta(s\rho))$ do oscillate and we may proceed as with $\chi_+(w)$ (the only differences with the case $\chi_+$ are the absence of the phase functions of $\overline{H}_n(n\rho)/H_n(n\rho)$, which means replacing $f_1(r,\rho)$ by $\sqrt{(r\rho)^2-1}$, and also the fact that the factor depending on $r,s,\rho$ in \eqref{I-chi+Msm} may not be bounded but at most $n^{1/3}$). Consider $n^{2/3}(-\tilde\zeta(s\rho))\leq 2$, then $|H_n(ns\rho)|\sim n^{-1/3}$ and the symbol of $I^{-,n,j}_{\chi_0,\chi_0}$ becomes
\[
J^{-,n,j}_{\chi_0,\chi_0}(r,s,\rho):= n^{-2\times \frac13-\frac16}\frac{s^2\Sigma_0(r,s,\rho,n)}{(rs)^{1/2}((r\rho)^2-1)^{\frac14}}\psi(\varphi)\chi_0(w)\chi(h\frac{2^{j}}{\varphi}(n+n^{1/3}w))\frac{2^{j}}{\varphi}(n+n^{1/3}w)2^{-2j}\frac{2^jn^{1/3}}{\varphi}
\]
where the elliptic symbol $\Sigma_0$ is an asymptotic expansion with small parameter $n^{-1}$ and with main contribution obtained as a product of $a_0$ in \eqref{Hnn}, $\sigma_0$ in \eqref{eq:Phi+} and $H_n(ns\rho)n^{-1/3}\times \frac{\overline{H}_n(n\rho)}{H_n(n\rho)}$.
The factor $((r\rho)^2-1)^{-1/4}$ is always bounded by $n^{1/6}$. The factors $2^{j}(n+n^{1/3}w)\times 2^{-2j}\times 2^{j}n^{1/3}/\varphi$ occur from the changes of variables $\vartheta\rightarrow \tau\gamma$, $\gamma\rightarrow \varphi$, $\tau\rightarrow w$. If $t\lesssim h^{-1/3}$ we conclude as in \eqref{I-chi_0jsmall}. 
Let $t\geq h^{-1/3}$. The phase $\phi^-_{0,n,j}:=\tau(t+z\sqrt{1-2^{-2j}\varphi^2})-\frac 23 n(-\tilde\zeta(r\rho))^{3/2}$
is not stationary 
for $z$ such that $\tau 2^{-j}|z|\sim 2^{j}n\times 2^{-2j}|z|\sim 2^{-j} n|z|\geq h^{-\epsilon}$ for some small $\epsilon>0$ and we perform repeated integrations by parts to conclude. If $2^{-j} n|z|\leq 2h^{-\epsilon}$, then for $|t|\geq 4|z|$ we integrate by parts, while for $|t|\leq 4|z|$ we use $2^{-j}n|z|\sim 2^{-2j}|z|/h\leq 2h^{-\epsilon}$, $2^{-j}/h\geq 1$ to obtain
\[
\sum_{j\geq 1,n\sim 2^{-j}/h}|I^{-,n,j}_{\chi_0,\chi_0}|\leq \sum_{n\sim 2^{-j}/h, 2^{-2j}\leq h^{1-\epsilon}/|z|} n^{2/3}\leq\frac{h^{1/3}}{h^2}\sum_{h^2\leq 2^{-2j}\leq 2h^{1-\epsilon}/|z|} 2^{-2j+j/3}<\frac{h^{1-\epsilon}}{h^2 t}.
\]
Let $r\geq \sqrt{2}$ and $j\geq j(r,h)$: for $s$ such that $n^{2/3}(-\tilde\zeta(s\rho))\leq 2$ we conclude as before (with an additional factor $1/r$ in the symbol). For $n^{2/3}(-\tilde\zeta(s\rho))\geq 1$, the situation is similar to the one of $\chi_+$ dealt with before.
\end{proof}
\begin{lemma}
For $1< s\leq r\leq \sqrt{2}$ we have $\sum_{n\geq n_0, j\geq 1}|\sum_{\pm}I^{\pm,n,j}_{\chi_0,\chi_-}|\lesssim \frac{1}{h^2 t}$. For $r\geq s$ with $r\geq \sqrt{2}$ and $j(r,h)$ given in Definition \ref{defjjsh}, we also have $\sum_{n\geq n_0, j\geq j(r,h)}|\sum_{\pm}I^{\pm,n,j}_{\chi_0,\chi_-}|\lesssim \frac{1}{h^2 t}$.
\end{lemma}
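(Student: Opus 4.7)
The plan is to exploit a cancellation between $I^{+,n,j}_{\chi_0,\chi_-}$ and $I^{-,n,j}_{\chi_0,\chi_-}$ which is forced by the Dirichlet boundary condition, rather than trying to bound each term separately. Indeed, on the support of $\chi_-(w)$ we have $\rho = \tau\sqrt{1-\gamma^2}/n \leq 1 - c n^{-2/3}$, so $\tilde\zeta(\rho)>0$ and $A_+(n^{2/3}\tilde\zeta(\rho))$ is in its exponentially growing regime: individually, both $G^+_n$ and $G^-_n$ grow exponentially in $n$ (dominated by the $Y_n$-component of the Hankel factors), and the estimates used in the earlier lemmas for $\chi_+,\chi_0$ cannot close. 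However, by linearity, $\sum_\pm I^{\pm,n,j}_{\chi_0,\chi_-}$ reassembles the full Green kernel $G_n$ of \eqref{Gn}, which vanishes at $s=1$ and, in the elliptic region, is controlled by the decaying (real-Airy) component alone.

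First I regroup using the identity
\[
\overline H_n(s\kappa) - \frac{\overline H_n(\kappa)}{H_n(\kappa)}H_n(s\kappa) \;=\; \frac{2i}{H_n(\kappa)}\bigl[\,J_n(s\kappa)\,Y_n(\kappa)-J_n(\kappa)\,Y_n(s\kappa)\,\bigr],
\]
which makes visible the zero at $s=1$. Then I substitute the uniform Langer asymptotic expansions for $J_n(n\tilde\rho)$ and $Y_n(n\tilde\rho)$ at $\tilde\rho\in\{\rho,s\rho,r\rho\}$: in the subregion where $s\rho, r\rho<1$, this writes $J_n,Y_n$ in terms of $\mathrm{Ai},\mathrm{Bi}$ evaluated at $n^{2/3}\tilde\zeta(\tilde\rho)>0$; the growing pieces proportional to $e^{+\frac{2n}{3}\tilde\zeta(\rho)^{3/2}}$ cancel exactly in the bracket because of the Airy Wronskian $\mathrm{Ai}\,\mathrm{Bi}'-\mathrm{Ai}'\mathrm{Bi} =1/\pi$, and the surviving contribution has leading exponential factor $\exp\!\bigl(-\tfrac{2n}{3}[\tilde\zeta(s\rho)^{3/2}-\tilde\zeta(r\rho)^{3/2}]\bigr)\leq 1$ (as $r\geq s$ and $\tilde\zeta$ is decreasing on $(0,1]$), multiplied by algebraic prefactors of the form $n^{-2/3}(\tilde\zeta(s\rho)\tilde\zeta(r\rho))^{-1/4}(1-(s\rho)^2)^{-1/4}(1-(r\rho)^2)^{-1/4}$. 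Mixed subregions where $s\rho<1<r\rho$ or $1<s\rho\leq r\rho$ are handled similarly by switching the appropriate $\mathrm{Ai}/\mathrm{Bi}$ factor to its oscillatory $A_\pm$ counterpart; the key point is that at most one exponentially growing factor ever appears and it is always cancelled by the explicit $1/H_n(\kappa)$ in the bracket.

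With this pointwise bound on the integrand in hand, I control the $(\tau,\gamma)$-integration exactly as in the preceding lemma, via the change of variables $\gamma\mapsto\varphi=2^{j}\sqrt{1-\gamma^2}$ and $\tau 2^{-j}\varphi=n+n^{1/3}w$ (now with $w<-1$). The phase $\tau(t+z\gamma)-\frac{2n}{3}\tilde\zeta(r\rho)^{3/2}+\frac{2n}{3}\tilde\zeta(s\rho)^{3/2}$ is purely real on the support of $\chi_-$ (no $f_1$-terms survive), so any stationary-point contribution is either absent, or forces $t\sim 2^j r$, which combined with the Airy prefactor $n^{-2/3}$ and the support constraints $n\sim 2^{-j}/h$, $1/2\leq\varphi\leq 2$ gives at most $\frac{1}{h^2}\cdot\frac{2^{-2j}}{n}$ per $(n,j)$. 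Summing over $n\sim 2^{-j}/h$ and over $j\geq 1$ (or $j\geq j(r,h)$ when $r\geq\sqrt 2$), exactly as in the final estimate of Lemma~\ref{lemchi+}, delivers the claimed bound $C/(h^2 t)$; non-stationarity of $\tau$ disposes of the remaining regime $t\not\sim|z|$ by repeated integration by parts.

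The main obstacle is the transition zone where $s\rho$ or $r\rho$ lies within $O(n^{-2/3})$ of $1$: there the Langer expansion breaks down and one must instead use the Taylor expansion of $\tilde\zeta$ near $\rho=1$ and the Airy-transitional regime (bounded but neither exponentially large nor small). In this thin zone the algebraic prefactor $(1-(\tilde\rho)^2)^{-1/4}$ grows like $n^{1/6}$, but the Airy-Wronskian cancellation still provides enough absolute control to absorb it, since the $w$-support of the corresponding subregion has length $O(n^{-2/3})$, which is precisely the scale at which the transitional Airy estimates are sharp.
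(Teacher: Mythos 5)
Your plan is, at bottom, the paper's own: instead of estimating $I^{+,n,j}_{\chi_0,\chi_-}$ and $I^{-,n,j}_{\chi_0,\chi_-}$ separately (where each integrand is exponentially large, since $H_n$ and $\bar H_n$ both grow when $\rho<1$), you pass to the combined kernel $G_n$. Your cross--Wronskian identity is algebraically equivalent to the paper's move to the first line of \eqref{Gn}: indeed $\bar H_n(s\kappa)-\tfrac{\bar H_n(\kappa)}{H_n(\kappa)}H_n(s\kappa) = 2\bigl(J_n(s\kappa)-\tfrac{J_n(\kappa)}{H_n(\kappa)}H_n(s\kappa)\bigr)$, because $\bar H_n+H_n=2J_n$ and hence $\tfrac{\bar H_n(\kappa)}{H_n(\kappa)} = -1 + \tfrac{2J_n(\kappa)}{H_n(\kappa)}$. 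After that, the Langer/Airy substitution, the conclusion that the net exponential is $\exp\bigl(-\tfrac{2n}{3}[\tilde\zeta(s\rho)^{3/2}-\tilde\zeta(r\rho)^{3/2}]\bigr)\le 1$ for $r\ge s$, the treatment of the transition zone with the $(1-(\tilde\rho)^2)^{-1/4}$ prefactor compensated by the $O(n^{-2/3})$ support in $w$, and the final stationary-phase and summation steps (reusing the $\chi_0,\chi_0$ argument with $n\sim 2^{-j}/h$) all coincide with the paper's proof, both for $1<s\le r\le\sqrt2$ and for $r\ge\sqrt2$, $j\ge j(r,h)$.

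The one step that is not right as stated is the assertion that ``the growing pieces proportional to $e^{+\frac{2n}{3}\tilde\zeta(\rho)^{3/2}}$ cancel exactly in the bracket because of the Airy Wronskian $\mathrm{Ai}\,\mathrm{Bi}'-\mathrm{Ai}'\mathrm{Bi}=1/\pi$.'' In the bracket $J_n(s\kappa)Y_n(\kappa)-J_n(\kappa)Y_n(s\kappa)$ the two Airy arguments $n^{2/3}\tilde\zeta(s\rho)$ and $n^{2/3}\tilde\zeta(\rho)$ are distinct (unless $s$ is within $O(n^{-2/3})$ of $1$), so the single-point Wronskian plays no role; moreover the growing factor $e^{+\frac{2n}{3}\tilde\zeta(\rho)^{3/2}}$ occurs only in the first of the two products ($Y_n(\kappa)$), not in the second, so there is nothing for it to cancel against. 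What saves the estimate is not a cancellation inside the bracket but the external $1/H_n(\kappa)$ factor, which supplies $e^{-\frac{2n}{3}\tilde\zeta(\rho)^{3/2}}$; once that is in place \emph{each} of the two terms of the bracket, multiplied by $H_n(r\kappa)/H_n(\kappa)$, already has exponent $\le 0$ on its own. Equivalently: the only ``cancellation'' used is the elementary identity $\bar H_n+H_n=2J_n$, which trades the growing $\bar H_n(s\kappa)$ for the decaying $J_n(s\kappa)$ — and this is exactly what passing to the Bessel representation of $G_n$ accomplishes in the paper. The Wronskian picture you describe becomes relevant only in the thin transitional band $s\sim 1 + O(n^{-2/3})$, where the two Airy arguments coalesce and the leading constant of the bracket is indeed Wronskian-governed; but there the correct estimate is just boundedness of the transitional Airy factor together with the $O(n^{-2/3})$ width of the band, as you correctly note, and no finer cancellation needs to be invoked.
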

\begin{proof}
Recall that $1-\gamma^2=2^{-2j}\varphi^2$, with $\varphi\sim 1$ on the support of $\psi$, and $\rho=\tau\sqrt{1-\gamma^2}/n=1+n^{-2/3}w$: as $w<-1$ on the support the symbol of $I^{\pm,n,j}_{\chi_0,\chi_-}$ then $\rho\in [1-\varepsilon,1-n^{-2/3}]$. It will be convenient to use the representation of $G_n$ in terms of Bessel functions $J_n$ instead of $H_n$, hence the first line in \eqref{Gn}. We estimate
\begin{equation}\label{hen2}
\begin{aligned}
&\frac{s^2}{(rs)^{1/2}}\sum_{n\geq 1}e^{\ii n\theta}\sum_{j\geq 1}\int e^{\ii\frac{2^j(n+n^{1/3}w)}{\varphi}(t-z\sqrt{1-2^{-2j}\varphi^2})}\chi\Big(\frac{2^{j}h(n+n^{1/3}w)}{\varphi}\Big)\frac{J_{n}(n(1+n^{-\frac23}w))}{{H}_{n}(n(1+n^{-\frac23}w))}\\
&\times \psi(\varphi) \chi_-(w){H}_{n}(nr(1+n^{-\frac23}w)){H}_{n}(ns(1+n^{-\frac23}w))\frac{2^j}{\varphi} (n+n^{1/3}w)2^{-2j+j} n^{\frac13}dw d \varphi.
\end{aligned}
\end{equation}
The Bessel function $J_n(n\rho)$ is given by \eqref{Jnn}. The factor $J_n/H_n$ corresponds to the quotient $\frac{A}{A_+}(n^{\frac23}\tilde{\zeta}({\rho}))=e^{-2i\pi/3}+e^{-\frac43|n|\tilde{\zeta}(\rho)^{\frac32}}$ (see Lemma \ref{lem:Phi+}). On the support of the cut-offs of $I^{-,n,j}_{\chi_0,\chi_-}$, its symbol has the form
\begin{equation}\label{HnHn}
J^{-,n,j}_{\chi_0,\chi_-}: = \frac{s^2}{(rs)^{1/2}}n^{-2/3}
\Big(\frac{4\tilde\zeta(r\rho))}{1-(r\rho)^2}\Big)^{1/4}\Big(\frac{4\tilde\zeta(s\rho))}{1-(s\rho)^2}\Big)^{1/4}A_+(n^{2/3}\tilde\zeta(r\rho))A_+(n^{2/3}\tilde\zeta(s\rho))
e^{-\frac43n\tilde{\zeta}(\rho)^{\frac32}}\Sigma_-,
\end{equation}
for some symbol $\Sigma_-$ of order $0$.
In the case of $I^{+,n,j}_{\chi_0,\chi_-}$ one should replace $A_+(n^{2/3}\tilde\zeta(s\rho))$ by $A(n^{2/3}\tilde\zeta(s\rho))$ and remove the exponential decreasing factor.
When $n^{2/3}|\tilde\zeta(r\rho)|, n^{2/3}|\tilde\zeta(s\rho)|<2$ we can proceed exactly as for $I^{-,n,j}_{\chi_0,\chi_0}$ with $r-1,s-1\lesssim n^{-2/3}$ small. 
Assume $n^{2/3}\tilde\zeta(s\rho)\geq n^{2/3}\tilde\zeta(s\rho)\geq 1$ with $\rho\leq 1-\frac{1}{n^{2/3}}<1$, then
\begin{equation}\label{HnHn1}
J^{-,n,j}_{\chi_0,\chi_-}:=\frac{s^2}{(rs)^{1/2}}\frac{n^{-\frac13-\frac16}}{(1-(r\rho)^2)^{\frac14}}\frac{n^{-\frac13-\frac16}}{(1-(s\rho)^2)^{\frac14}}e^{-\frac 43n\tilde\zeta(\rho)^{\frac 32}+\frac23n\tilde{\zeta}(s\rho)^{\frac32}+\frac23n\tilde{\zeta}(r\rho)^{\frac32}}\Sigma_-.
\end{equation}
As we are assuming $\tilde{\zeta}(r\rho),\tilde\zeta(s\rho)>0$, we have, using Lemma \ref{lemzeta}, $\frac 23 \tilde\zeta(s\rho)^{3/2}-\frac 23 \tilde\zeta(\rho)^{3/2}=-\int_{\rho}^{s\rho}\frac{\sqrt{1-w^2}}{w}dw\leq 0$. The phase function of $I^{\pm,n,j}_{\chi_0,\chi_-}$ is $\tau(t+z\gamma)$ and the factor $\frac{s^2}{(rs)^{1/2}(1-(s\rho)^2)^{\frac14}(1-(r\rho)^2)^{\frac14}}$ is at most $n^{1/3}$ when $1-s\rho \sim 1-r\rho\sim n^{-2/3}$, while for $r,s\geq 2$ this term is uniformly bounded by $1$. From now one can proceed as in the case of $I^{-,n,j}_{\chi_0,\chi_0}$ as on the support of $\chi(h\tau)$ we still have $n\sim 2^{-j}/h$, the phase is stationary for $t\sim |z|$ and for $2^{-j} n |z|\geq h^{-\epsilon}$ we integrate by parts, while for $2^{-j}n|z|\leq 2h^{-\epsilon}$ we conclude as done previously. 
\end{proof}
\begin{lemma}
For $1< s\leq r\leq \sqrt{2}$ we have $\sum_{n\geq n_0, j\geq 1}|\sum_{\pm}I^{\pm,n,j}_{\chi_-}|\lesssim \frac{1}{h^2 t}$. For $r\geq s$ with $r\geq \sqrt{2}$ and $j(r,h)$ as in Definition \ref{defjjsh}, we also have $\sum_{n\geq n_0, j\geq j(r,h)}|\sum_{\pm}I^{\pm,n,j}_{\chi_-}|\lesssim \frac{1}{h^2 t}$.
\end{lemma}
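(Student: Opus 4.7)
The strategy I would follow parallels the treatment of $I^{\pm,n,j}_{\chi_0,\chi_-}$ in the previous lemma. First I would combine the two contributions, since on the support of $\chi_-$ the individual kernels $G^\pm_n$ grow exponentially in $n$ through $|H_n(ns\rho)H_n(nr\rho)|$, and only the signed sum $G^+_n-G^-_n=G_n$ is controllable. Using the first line of \eqref{Gn}, I would rewrite
\[
\sum_\pm I^{\pm,n,j}_{\chi_-} = \int e^{i(t\tau+z\vartheta)}\chi(h\tau)\,s^2\psi(2^{2j}(1-(\vartheta/\tau)^2))\chi_-\Big(\big(\tfrac{\sqrt{\tau^2-\vartheta^2}}{n}-1\big)/\varepsilon\Big)\frac{\pi}{2i\sqrt{rs}}\Big(J_n(ns\rho)-\tfrac{J_n(n\rho)}{H_n(n\rho)}H_n(ns\rho)\Big)H_n(nr\rho)\,d\vartheta\,d\tau,
\]
and invoke Lemma \ref{lem:Phi+} for the asymptotics of $J_n$ and $H_n$ in the shadow zone where $\rho\leq 1-\varepsilon$ and $\tilde\zeta(\rho)\geq c_0(\varepsilon)>0$ is uniformly bounded below.

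Next I would identify the exponential factors carried by the resulting amplitude. In the sub-regime $s\rho,r\rho<1$, the two terms contribute respectively $\exp(-\tfrac{2}{3}n(\tilde\zeta(s\rho)^{3/2}-\tilde\zeta(r\rho)^{3/2}))$ and $\exp(-\tfrac{4}{3}n\tilde\zeta(\rho)^{3/2}+\tfrac{2}{3}n\tilde\zeta(s\rho)^{3/2}+\tfrac{2}{3}n\tilde\zeta(r\rho)^{3/2})$. By Lemma \ref{lemzeta} these equal $\exp(-n\!\int_{s\rho}^{r\rho}\tfrac{\sqrt{1-w^2}}{w}\,dw)$ and $\exp(-n[\int_\rho^{s\rho}+\int_\rho^{r\rho}]\tfrac{\sqrt{1-w^2}}{w}\,dw)$, both nonpositive; in particular the second is bounded above by $e^{-c_1(\varepsilon)n}$ whenever $r,s$ stay bounded away from $1$. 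Cases where $s\rho$ or $r\rho$ crosses $1$ merely convert part of the real exponential into an oscillatory factor with no growth, so the same majoration persists and the $n$-sum is absolutely convergent uniformly in the other parameters.

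Having secured the $n$-summation, I would then reuse the change of variables of the previous lemma: $\vartheta=\tau\gamma$, $1-\gamma^2=2^{-2j}\varphi^2$ with $\varphi\sim 1$, and $\tau=\frac{2^j}{\varphi}(n+n^{1/3}w)$. The phase collapses to $\frac{2^j(n+n^{1/3}w)}{\varphi}(t-z\sqrt{1-2^{-2j}\varphi^2})$ plus any oscillatory contribution surviving from $s\rho$ or $r\rho$ exceeding $1$, which is treated as in Lemma \ref{lemchi+}. The target bound $1/(h^2 t)$ then follows by the same three-case analysis: for $t\lesssim h^{-1/3}$ the crude estimate $\sum_{j,n\sim 2^{-j}/h}n^{2/3}\lesssim h^{1/3}/h^2$; for $t\geq 2|z|$ an integration by parts in $\tau$ produces the factor $(h/t)^N$; for $h^{-1/3}\lesssim t\leq 4|z|$ the stationary phase in $(\varphi,w)$ at a critical point with Hessian of order $(2^{-j}n^{1/3}|z|)^2$ furnishes the extra $h/t$ factor. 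The constraint $j\geq j(r,h)$ in the second assertion is exactly what legitimizes the stationary-phase step, as in Lemma \ref{lemchi+}.

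The main obstacle is the first (``free") term $\tfrac{\pi}{2i\sqrt{rs}}J_n(ns\rho)H_n(nr\rho)$ when $r$ and $s$ are nearly equal: its exponential $\exp(-\tfrac{2}{3}n(\tilde\zeta(s\rho)^{3/2}-\tilde\zeta(r\rho)^{3/2}))$ degenerates to $1$ as $s\to r$, so exponential $n$-decay alone is insufficient in that sub-regime. There one has to rely on the polynomial smallness coming from the product of Airy normalizations (the $n^{-2/3}$ factor of \eqref{HnHn1}) together with the remaining $(1-(r\rho)^2)^{-1/4}$ weight and a sharp integration by parts in $\tau$, mirroring the parallel step already carried out for $I^{\pm,n,j}_{\chi_0,\chi_-}$.
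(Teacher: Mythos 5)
Your overall strategy (combining the $\pm$ contributions into $G_n$ via the $J_n$ representation, using the shadow-zone asymptotics, computing the exponents through Lemma~\ref{lemzeta}, and then re-running the phase analysis of the previous lemma) parallels the paper's. However, your argument for uniform summability in $n$ has a gap in exactly the regime the paper singles out: $s$ and $r$ both close to $1$. You claim the reflected exponential $\exp(-n[\int_\rho^{s\rho}+\int_\rho^{r\rho}]\frac{\sqrt{1-w^2}}{w}\,dw)$ is bounded by $e^{-c_1(\varepsilon)n}$ ``whenever $r,s$ stay bounded away from~$1$'', but the hypothesis $1<s\le r\le\sqrt2$ permits $r,s\to 1^+$; there both integrals vanish and \emph{both} of your exponential factors degenerate to $1$. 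You correctly flag the degenerate behaviour of the free term $J_n(ns\rho)H_n(nr\rho)$ when $r\approx s$, but you do not address the parallel failure of the reflected term when $r,s\to 1$, nor do you verify that the residual Airy-normalization weight $(1-(r\rho)^2)^{-1/4}(1-(s\rho)^2)^{-1/4}$ stays under control there; you defer instead to ``mirroring'' the $\chi_0,\chi_-$ argument, in which that weight is allowed to be as large as $n^{1/3}$.

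What makes the lemma go through, and what your proposal lacks, is a structural observation about the $\chi_-$ support: since $\rho\le 1-\varepsilon$ there and $\varepsilon$ is chosen with $\varepsilon>2(\sqrt2-1)$, the elementary identity $1-r\rho=(1-r)+r(1-\rho)\ge r\varepsilon-(r-1)>0$ forces $1-r\rho$ (and $1-s\rho$) to be bounded below by a positive constant depending only on $\varepsilon$ for all $1\le s\le r\le\sqrt2$. Hence the whole prefactor $\frac{s^2}{(rs)^{1/2}(1-(s\rho)^2)^{1/4}(1-(r\rho)^2)^{1/4}}$ is \emph{uniformly bounded}, strictly better than the $n^{1/3}$ allowance of the $\chi_0,\chi_-$ case. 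With your (correct) observation that both exponents are nonpositive, the symbol is then $O(n^{-1})$ outright — no exponential $n$-decay is required — and the conclusion follows by the same phase analysis ``as before''. Without this observation your proof does not control the sum over $n$ uniformly in $r,s$.
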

\begin{proof}
On the support of $I_{\chi_-}^{-,n,j}$ we have $\rho=\frac{\tau\sqrt{1-\gamma^2}}{n}\leq 1-\varepsilon$.
The symbol of $I_{\chi_-}^{-,n,j}$ has also the form \eqref{HnHn}. For small $r,s\leq\sqrt{2}$ and $\varepsilon>2(\sqrt{2}-1)$, we write $1-r\rho=1-r+r(1-\rho)$ to deduce that, if $\rho\leq 1-\varepsilon$, then the symbol \eqref{HnHn} takes the form \eqref{HnHn1} where the factor $\frac{s^2}{(rs)^{1/2}(1-(s\rho)^2)^{\frac14}(1-(r\rho)^2)^{\frac14}}$ is uniformly bounded by a constant depending only on $\varepsilon$ and we conclude as before. When $r,s$ are large (and $\tau 2^{-2j}|z|\leq M$, $\tau 2^{-j} r\leq M$  for large $M>1$), we separate the possible situations : the only new one is the case $n^{2/3}(1-r\rho), n^{2/3}(1-s\rho)\geq 1$ and $r$ such that $r<1/\rho\leq 1/(1-\varepsilon)$, in which case $\frac{s^2}{(rs)^{1/2}(1-(s\rho)^2)^{\frac14}(1-(r\rho)^2)^{\frac14}}\leq r n^{1/3}\leq \frac{n^{1/3}}{(1-\varepsilon)}$. In this case we have additional decay from the exponential factors and conclude as before.
\end{proof}

\section{Small frequency case}
Let $\tau\leq 1/h_0$ for some fixed $h_0>0$, small enough. We use again the parametrix in terms of Bessel functions introduced in Section \ref{secBesHan} and keep the same notations. We split $I=I^++I^-$, and for $n\geq 1$ large enough, $I^{\pm}=\sum_{*\in \{0,\pm\}}I^{\pm}_{\chi_*}$, with $I^{\pm}_{\chi_*}$ introduced as a sum of $I^{\pm,n,j}_{\chi_*}$ given in \eqref{chi12nj} where $\chi(h\tau)$ is replaced by $\tilde\chi(\tau)$ supported for $\tau\leq 2/h_0$. Take $n_0=4/h_0$.
We aim at proving that $|\sum _{\pm}I^{\pm}_{\chi_*}|\lesssim C(h_0)/t$.
\begin{itemize}
\item On the support of $I^{\pm}_{\chi_*}$, $*\in \{+,0\}$, and for $n\geq n_0$ we have $n_0\leq n\leq \frac{\tau\sqrt{1-\gamma^2}}{1-\epsilon}<\frac{4}{h_0}=n_0$.
\item On the support of $I^{\pm}_{\chi_*}$, $*\in \{+,0\}$, and for $1\leq n\leq n_0$  as $\sqrt{1-\gamma^2}\sim 2^{-j}$ and $\tau\leq 2/h_0$, only a finite number of $j$ such that $2^j\leq 1/(h_0(1-\varepsilon))$ may contribute. For each $j,n$ on this finite set, the symbols of $I^{\pm}_{\chi_*}$ are bounded and their phase may oscillate only for large $t$ or large $|z|$. If $t$ is bounded then if $r$ or $|z|$ are larger than $\max\{4 t, M\}$ for some $M>1$ large enough, integrations by parts allow to conclude (using that the sum is finite); if $|z|,r\leq 4t$ each integral is bounded and we obtain $|I^{\pm}_{\chi_*}|\lesssim C(h_0)$. If $t$ be sufficiently large, then if $t/(|z|+2^j r)\notin [1/8,8]$, integrations by parts yields a contribution $O(1/t^N)$ for each pair $(j,n)$ on the support of $I^{\pm,n,j}_{\chi_*}$. If $t/(|z|+2^j r)\in [1/8,8]$, we separate the cases $2^{-2j}|z|\geq M$ for some large $M$, when we apply the stationary phase in $\varphi=2^{j}\sqrt{1-\gamma^2}$ and we conclude as in \eqref{I-chi+Msm} or $2^{-2j}|z|\leq M$, when we bound directly as in \eqref{Ichi+00nnst}.

\item On the support of $I^{\pm}_{\chi_-}$ we have $n\geq \tau\sqrt{1-\gamma^2}/(1-\varepsilon)$, hence the sum over $n$ is unbounded but as $n\gg \tau\sqrt{1-\gamma^2}$ we may use \eqref{larg_ord} and conclude. 
\end{itemize}

\section{Appendix}
 
 \subsection{Airy functions} \label{sectAA+}
For $w\in\C$, the Airy function is defined as follows : $A(w)=\frac{1}{2\pi}\int_{\R}e^{\ii(s^3/3+sw)}ds$. Let  $A_{\pm}(w):=A(e^{\mp2\ii\pi/3}w)$, then $A_-(w)=\bar{A}_+(\bar{w})$ and $A(w)=e^{\ii\pi/3}A_+(w)+e^{-\ii\pi/3}A_{-}(w)$. Moreover, $A_{\pm}(w),A'_{\pm}(w)$ are not zero for any $w\in\R$, while all the zeros of $A(w)$ and $A'(w)$ are real and non positive. 
 
We say that $f(w)$ admits an asymptotic expansion for $w\rightarrow 0$ if there exists $(c_j)_{j\in\N}$ such that for any $j\geq 0$ we have $\lim_{w\rightarrow 0}w^{-j-1}(f(w)-\sum_0^j c_iw^i)=c_{j+1}$. We  write $f(w)\sim_{w}\sum_j c_j w^j$.
\begin{lemma}\label{lem:Phi+}
Let $\Sigma(w):=(A_+(w)A_{-}(w))^{1/2}$, then $\Sigma(z)=|A_+(w)|=|A_{-}(w)|$ is real, monotonic increasing in $w$ and nowhere vanishing. We let $\mu(w):=\frac{1}{2\ii}\log(\frac{A_-(w)}{A_+(w)})$ for $w<-1/4$.
Then $A_{\pm}(w)=\Sigma(w)e^{\mp\ii\mu(w)}$. For $w<-1$, the following asymptotic expansions hold
\begin{equation}\label{eq:Phi+}
\Sigma(w)\sim_{\frac1w}(-w)^{-\frac14}\sum_{j\geq 0}\sigma_j(-w)^{-\frac{3j}{2}},\,\, \mu(w)\sim_{\frac1w}\frac{2}{3}(-w)^{\frac32}\sum_{j\geq 0}e_j(-w)^{-\frac{3j}{2}},\,\,\sigma_0=\frac{1}{2\sqrt{\pi}}, \,\,e_0=1.
\end{equation}
The Airy quotient $\Phi_{+}(w)=\frac{A_+'(w)}{A_+(w)}=\frac{\Sigma'(w)}{\Sigma(w)}-\ii\mu'(w)$ satisfies everywhere $\Phi_{+}'(w)=w-\Phi_+^2(w)$. In particular $\Phi'_+(w)$ is bounded on $(-\infty,-1)$ and 
$\Phi_+(w)\sim_{\frac1w}(-w)^{\frac12}\sum_{j\geq0}d_j(-w)^{-\frac{3j}{2}}$, $d_0=1$, for $(-w)>1$ large.

For $w> 1$, the functions $A_{\pm}(w)$ grow exponentially $A_{\pm}(w)=\Sigma_{\pm}(w)e^{\frac23w^{3/2}}$, where $\Sigma_{\pm}$ are classical symbols of order $-1/4$ and 
we have
$\frac{A_{-}(w)}{A_{+}(w)}+e^{2\ii\pi/3}=O(w^{-\infty})$ when $w\rightarrow \infty$ and $\frac{A_{-}(w)}{A_{+}(w)}\sim_{\frac1w} e^{2\ii\mu(w)}$ when $w\rightarrow -\infty$.
Moreover the Airy function $A(w)$ decays exponentially for $w> 1$, $A(w)\sim_{\frac1w} |w|^{-\frac14}e^{-\frac23 w^{3/2}}$.
\end{lemma}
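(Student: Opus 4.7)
The plan is to derive everything from two ingredients: the Airy differential equation $A''=wA$, which follows from differentiating the integral representation twice and which, via the chain rule together with $(e^{\mp 2\ii\pi/3})^3=1$, is inherited by $A_\pm(w)=A(e^{\mp 2\ii\pi/3}w)$; and the Fourier integral itself, from which I would read off all asymptotics via contour deformation and steepest descent. First I would set up the algebraic structure. From $\overline{A(z)}=A(\bar z)$ (evident from the integral), one gets $\overline{A_\pm(w)}=A_\mp(w)$ for $w\in\mathbb R$, hence $A_+A_-=|A_+|^2\geq 0$ on $\mathbb R$, so $\Sigma:=(A_+A_-)^{1/2}=|A_\pm|$ is real. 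That $A_\pm$ have no real zeros, and hence that $\mu=(2\ii)^{-1}\log(A_-/A_+)$ is well defined modulo $\pi$, I would obtain from the constancy of the Wronskian $W(A_+,A_-)=A_+A_-'-A_+'A_-$ (both solve the same ODE): a real zero of $A_+$ would, by conjugation, be a simultaneous zero of $A_-$, forcing $W\equiv 0$, which contradicts the nonzero value of $W$ read off from the integral representation at infinity.

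For the oscillatory regime $w\to -\infty$, I would apply steepest descent to
\[
A_+(w)=\tfrac{1}{2\pi}\int e^{\ii(s^3/3+se^{-2\ii\pi/3}w)}\,ds
\]
after deforming the contour through the relevant saddle, producing a symbolic series
\[
A_+(w)\sim\tfrac{1}{2\sqrt\pi}(-w)^{-1/4}e^{-\ii\frac{2}{3}(-w)^{3/2}}\sum_{j\geq 0}c_j(-w)^{-3j/2},\qquad c_0=1,
\]
up to an inessential constant phase which merely shifts $\mu$. Reading off modulus and argument yields the expansions of $\Sigma$ and $\mu$ with $\sigma_0=1/(2\sqrt\pi)$ and $e_0=1$. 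The monotonicity and nowhere-vanishing of $\Sigma$ then follow from the Wronskian identity $(\Sigma^2)'=A_+'A_-+A_+A_-'$ combined with the boundary behaviour at $\pm\infty$.

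The Riccati identity $\Phi_+'=w-\Phi_+^2$ is immediate from the Airy equation: $(A_+'/A_+)'=A_+''/A_+-(A_+'/A_+)^2=w-\Phi_+^2$. Logarithmic differentiation of $A_+=\Sigma e^{-\ii\mu}$ gives $\Phi_+=\Sigma'/\Sigma-\ii\mu'$, so the expansion of $\Phi_+$ in powers of $(-w)^{-3/2}$ follows formally from those of $\Sigma$ and $\mu$, with dominant behaviour $|\Phi_+|\sim(-w)^{1/2}$ arising from the $\mu'$ term. The boundedness of $\Phi_+'$ on $(-\infty,-1)$ then drops out of the Riccati equation, because the leading term of $\Phi_+^2$ exactly cancels $w$.

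For $w\to+\infty$, steepest descent on $A_\pm$ picks up the saddle where the phase of the rotated integral becomes real-valued and yields exponential growth $e^{\frac{2}{3}w^{3/2}}$ with a classical-symbol prefactor of order $w^{-1/4}$, which is the $\Sigma_\pm$ claim. The connection formula $A=e^{\ii\pi/3}A_++e^{-\ii\pi/3}A_-$, combined with the decay $A(w)=O(w^{-1/4}e^{-\frac{2}{3}w^{3/2}})$ obtained by steepest descent directly on the integral for $A$, forces the two exponentially growing contributions on the left to cancel to all orders, giving $A_-/A_+=-e^{2\ii\pi/3}+O(w^{-\infty})$. The symmetric $w\to-\infty$ statement $A_-/A_+\sim e^{2\ii\mu(w)}$ is a tautology from $A_\pm=\Sigma e^{\mp\ii\mu}$. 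The main obstacle I anticipate is the careful bookkeeping of the steepest-descent series, in particular the identification of the correct Stokes sectors when rotating the contour of the oscillatory integrals defining $A_\pm$; conceptually straightforward but notationally heavy.
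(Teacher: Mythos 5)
The paper does not actually prove Lemma~\ref{lem:Phi+}: it is stated in the appendix as a compendium of classical Airy-function facts, with the underlying theory implicitly referred to Olver \cite{ol74}. So there is no paper proof to compare against; what matters is whether your sketch is sound, and in the main it is — it follows the canonical route (Airy ODE, integral representation, steepest descent, Wronskian constancy, connection formula $A=e^{\ii\pi/3}A_++e^{-\ii\pi/3}A_-$), which is exactly the route Olver takes and the one the paper tacitly assumes.

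One step is underspecified. You invoke ``the Wronskian identity $(\Sigma^2)'=A_+'A_-+A_+A_-'$ combined with the boundary behaviour'' to get monotonicity of $\Sigma$. That displayed formula is simply the product rule, not a Wronskian, and by itself it says nothing about the sign of $(\Sigma^2)'$. The actual mechanism does use the Wronskian $W:=A_+A_-'-A_+'A_-$ (a nonzero constant, since both solve $u''=wu$): writing $N:=\Sigma^2=A_+A_-$ one checks, using the ODE and $\ol{A_+}=A_-$ on $\RRR$, that
\begin{equation*}
(N')^2 = W^2 + 4N\,|A_+'|^2,
\end{equation*}
so $N'$ never vanishes; combined with the positive sign of $N'$ read off from the $w\to-\infty$ asymptotics, this gives $N'>0$ everywhere, hence $\Sigma$ is strictly increasing and nowhere vanishing. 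Your sketch gestures at the right ingredients (Wronskian constancy plus boundary behaviour) but, as written, the stated identity is not enough to conclude; you need the quadratic relation above.

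A smaller point: with your normalization $\mu\sim\tfrac23(-w)^{3/2}$ one gets $\mu'\sim-(-w)^{1/2}$ and hence $\Phi_+=\Sigma'/\Sigma-\ii\mu'\sim \ii(-w)^{1/2}$, so the leading coefficient in the $\Phi_+$ expansion is $\ii$, not $1$ (otherwise $w-\Phi_+^2\sim 2w$ would be unbounded, contradicting the very boundedness you are proving). Your argument — ``the leading term of $\Phi_+^2$ exactly cancels $w$'' — implicitly uses the correct value $\ii$; the $d_0=1$ in the lemma statement is a typo that your sketch silently repairs. Everything else (the Riccati identity, the $w\to+\infty$ Stokes cancellation giving $A_-/A_+=-e^{2\ii\pi/3}+O(w^{-\infty})$, and the tautological $A_-/A_+=e^{2\ii\mu}$) is fine.
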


\subsection{Bessel and Hankel functions}
The Hankel function $H_{\nu}(z)$ is a solution to the Bessel's equation $w^2H_\nu''(w)+wH_\nu'(w)+(w^2-\nu^2)=0$. The couple $\{H_\nu(w),\bar{H}_\nu(w)\}$ is a fundamental system of solutions for the Bessel equation. The real and imaginary part of $H_\nu(w)$, denoted $J_\nu(w)$ and $Y_\nu(w)$ respectively, are the usual Bessel function of the first and second type. The Hankel function of order $\nu$ is defined
by (\cite[(9.1.25)]{AS})
\begin{equation}\label{Hankdef}
H_{\nu}(w)=\int_{-\infty}^{+\infty-i\pi} e^{w\sinh t-\nu t}dt.
\end{equation}
For large positive order $\nu$ and $w=\nu\rho$, the Hankel functions have the following expansions that hold \emph{uniformly} with respect to $\rho$ in the sector $|\arg(\rho)|<\pi-\epsilon$, where $\epsilon>0$ is an arbitrary number \cite[(9.3.37)]{AS}:
\begin{equation}\label{Hnn}
H_\nu(\nu\rho)= 2e^{-\frac{\ii\pi}{3}}\Big(\tfrac{-4\tilde{\zeta}(\rho)}{\rho^2-1}\Big)^{\frac14}\Big(\nu^{-\frac13}A_+(\nu^{\frac23}\tilde{\zeta}(\rho))\big(\sum_{j\geq0}a_j(\tilde\zeta)\nu^{-2j}\big)+\nu^{-\frac53}A_+'(\nu^{\frac{2}{3}}\tilde{\zeta}(\rho))\big(\sum_{j\geq0}b_j(\tilde\zeta) \nu^{-2j}\big)\Big),
\end{equation}
\begin{equation}\label{Jnn}
J_\nu(\nu\rho)= 2e^{-\frac{\ii\pi}{3}}\Big(\tfrac{-4\tilde{\zeta}(\rho)}{\rho^2-1}\Big)^{\frac14}\Big(\nu^{-\frac13}A(\nu^{\frac23}\tilde{\zeta}(\rho))\big(\sum_{j\geq0}a_j(\tilde\zeta) \nu^{-2j}\big)+\nu^{-\frac53}A'(\nu^{\frac{2}{3}}\tilde{\zeta}(\rho))\big(\sum_{j\geq0}b_j(\tilde\zeta) \nu^{-2j}\big)\Big).
\end{equation}
Here $a_j(\tilde\zeta)$, $b_j(\tilde\zeta)$ are given in \cite[(9.3.40)]{AS} and $\tilde{\zeta}(\rho)$ is provided in Lemma \ref{lemzeta} (see \cite[(9.3.38),(9.3.39)]{AS}).
When $\rho= 1+\nu^{-2/3}v$, $v=O(1)$, $w=\nu\rho=\nu+\nu^{1/3}v$, these formulas reduce to (see \cite[(9.3.23),(9.3.24)]{AS}) 
\begin{gather}\label{Hnnv}
H_\nu(\nu+\nu^{1/3}v)= \frac{2^{1/3}}{\nu^{\frac13}}A_+(-2^{1/3}v)\big(1+\sum_{j\geq 1}\tilde a_j(v)\nu^{-2j/3}\big)+\frac{2^{2/3}}{\nu}A_+'(-2^{\frac{1}{3}}v)\big(\sum_{j\geq0}\tilde b_j(v) n^{-2j/3}\big),\\
%
J_\nu(\nu+\nu^{1/3}v)= \frac{2^{1/3}}{\nu^{\frac13}}A(-2^{1/3}v)\big(1+\sum_{j\geq 1}\tilde a_j(v)\nu^{-2j/3}\big)+\frac{2^{2/3}}{\nu}A'(-2^{\frac{1}{3}}v)\big(\sum_{j\geq0}\tilde b_j(v) n^{-2j/3}\big).
\end{gather}
where $\tilde a_j$, $\tilde b_j$ are polynomials in $v$ given in \cite[(9.3.25),(9.3.26)]{AS}.
\begin{rmq}
The formulas \eqref{Hnn}, \eqref{Jnn} are among the deepest and most important results in the theory of Bessel functions. In order to prove \eqref{Jnn} starting from \eqref{Hankdef} one may chose a suitable contour that yields $J_{\nu}(\nu\rho)=(2\pi)^{-1}\int e^{i\nu\phi(\rho,t)}dt$ with $\phi(\rho,t)=\rho\sin t-t$; for $\nu$ large enough and for $\rho>1$, the critical point $t(\rho):=\arccos(1/\rho))$ is real and the critical value equals $\phi(\rho,t(\rho))=\sqrt{\rho^2-1}-\arccos(1/\rho)=\frac 23(-\tilde\zeta)^{3/2}$, where $\tilde\zeta(\rho)$ is defined as in \eqref{tildezet>}. As the phase function of $A(\nu^{2/3}\tilde\zeta)$ equals $\nu(s^3+s\tilde\zeta)$ and has critical points $s^2=-\tilde\zeta$ and critical values $\pm \frac 23(-\tilde\zeta)^{3/2}$, one obtains \eqref{Jnn} by stationary phase (see \cite{ol74} for details).
\end{rmq}

When the order is much larger than the argument $n\gg w$, \eqref{Hnn}, \eqref{Jnn} reduce to (see \cite[(9.3.1)]{AS})\begin{equation}\label{larg_ord}
J_n(w)=\sqrt{\frac{1}{2\pi n}}\Big(\frac{ew}{2 n }\Big)^{n}\Big(1+O(\frac{|w|}{n})\Big),\quad Y_{n}(w)=-\sqrt{\frac{1}{2\pi n }}\Big(\frac{ew}{2 n }\Big)^{-n}\Big(1+O(\frac{|w|}{n})\Big), \quad n\gg 1.
\end{equation}
As we consider cylindrical coordinates we deal only with $\nu=n\in \ZZZ$ : in view of the well-known relations $H_{-n}(w)=(-1)^nH_n(w)$ (see \cite[(9.1.6)]{AS}), we may consider only non negative values of $n$ in our discussion. 

%

\bigskip


\begin{thebibliography}{12}

\bibitem{AS}
M.Abramowitz and I.A.Stegun. 
\newblock 
{\it Handbook of mathematical functions, with formulas, graphs, and mathematical tables.}. 
\newblock Edited by Milton Abramowitz and Irene A. Stegun. 
\newblock Dover publications Inc., New York, 1966.

\bibitem{CFU}
C.Chester, B.Friedman, F.Ursell.
\newblock 
{\it An extension of the method of steepest descents}. 
\newblock Proc. Cambridge Philos. Soc., 53:599-611,1957

\bibitem{give85}
J.Ginibre and G.Velo
\newblock
The global Cauchy problem for the nonlinear Schrödinger equation revisited
\newblock Ann. Inst. H. Poincaré Anal. Non Linéaire, 2(4):309-327, 1985

\bibitem{GV85}
J.~Ginibre and G.~Velo.
\newblock The global {C}auchy problem for the nonlinear {K}lein-{G}ordon
  equation.
\newblock {\em Math. Z.}, 189(4):487--505, 1985.

\bibitem{give95}
J.Ginibre and G.Velo.
\newblock
Generalized Strichartz inequalities for the wave equation
\newblock
Partial differential operators and mathematical physics (Holzhau, 1994), vol. 78 of Open Theory Adv. Appl. p 153-160, Birkhäuser, Basel, 1995


\bibitem{Hor3}
L. H\"ormander.
\newblock  The Analysis of Linear Partial Differential Operators III. 
\newblock Springer, 1994


\bibitem{IL}
O.Ivanovici and G.Lebeau.
\newblock  Dispersion for the wave and Schr\"odinger equations outside strictly convex obstacles and counterexamples. 
\newblock preprint https://arxiv.org/pdf/2012.08366.pdf


\bibitem{ls95}
H.Lindblad and Ch. Sogge.
\newblock
On existence and scattering with minimal regularity for semi-linear wave equation
\newblock
J.Funct. Anal., 130(2):357-426, 1995

\bibitem{lev90}
L.Kapitanski.
\newblock
Some generalizations of the Strichartz-Brenner inequality
\newblock
Algebra i Analiz, 1(3):127-159, 1989

\bibitem{keta98}
M.Keel and T.Tao.
\newblock
Endpoint Strichartz estimates
\newblock
Amer. J. Math., 120(5):955-980, 1998


\bibitem{Meas}
L. Meas.
\newblock{\it Dispersive estimates for the wave equation inside cylindrical convex domains }
\newblock{Comptes Rendus Math\'ematique}, vol.355, issue 2, 161--165 (2017) 
%
%
%
%
%

\bibitem{meta87}
R. Melrose, M.Taylor.
\newblock Boundary problems for the wave equations with grazing and gliding
  rays, 1987.


\bibitem{ol74}
F.Olver.
\newblock Asymptotics and Special functions. Academic Press, New York, 1974

\bibitem{sm98}
H.~F. Smith.
\newblock A parametrix construction for wave equations with {$C\sp {1,1}$}
  coefficients.
\newblock {\em Ann. Inst. Fourier (Grenoble)}, 48(3):797--835, 1998.

\bibitem{SmSo94Duke}
H.~F. Smith and Ch.~D. Sogge.
\newblock {$L^p$} regularity for the wave equation with strictly convex
  obstacles.
\newblock {\em Duke Math. J.}, 73(1):97--153, 1994.

\bibitem{smso95}
H.~F.Smith and Ch.~D.Sogge.
\newblock On the critical semilinear wave equation outside strictly convex
  obstacles.
\newblock {\em J. Amer. Math. Soc.}, 8(4):879--916, 1995.

\bibitem{stri77}
R.Strichartz.
\newblock Restrictions of Fourier transforms to quadratic surfaces and decay of solutions of wave equation
\newblock Duke Math. J. 44(3):705-714, 1977

\bibitem{tat02}
D.Tataru.
\newblock
Strichartz estimates for second order hyperbolic operators with non-smooth coefficients III
\newblock 
J.Amer.Math.Soc., 15(2):419-442 (electronic), 2002


\bibitem{Taylor2} M. Taylor.
\newblock{\it Partial Differential Equations II.}
\newblock Springer, 1996.


\bibitem{zw90}
M. Zworski.
\newblock High frequency scattering by a convex obstacle.
\newblock {\em Duke Math. J.}, 61(2):545--634, 1990.


\end{thebibliography}
\end{document}